\documentclass[11pt]{article}
\usepackage[utf8]{inputenc}
\usepackage{amsmath,amsfonts,amssymb}
\usepackage{amsthm}
\usepackage{latexsym}
\usepackage[noadjust]{cite}
\usepackage{graphicx}
\usepackage{hyperref}
\usepackage{xcolor}
\usepackage{dirtytalk}
\usepackage{mathtools}
\usepackage[margin=1in]{geometry}
\usepackage{thm-restate}
\usepackage{enumitem}
\usepackage[linesnumbered,ruled]{algorithm2e}

\newtheorem{theorem}{Theorem}[section]

\newtheorem{lemma}[theorem]{Lemma}
\newtheorem{observation}[theorem]{Observation}

\theoremstyle{definition}
\newtheorem{definition}[theorem]{Definition}
\newtheorem{remark}[theorem]{Remark}

\usepackage[nameinlink]{cleveref}
\Crefname{claim}{Claim}{Claims}
\crefname{theorem}{Theorem}{Theorems}
\crefname{claim}{Claim}{Claims}
\crefname{proposition}{Proposition}{Propositions}
\crefname{definition}{Definition}{Definition}
\crefname{lemma}{Lemma}{Lemma}
\crefname{corollary}{Corollary}{Corollaries}
\crefname{ineq}{inequality}{inequalities}
\Crefname{equation}{Equation}{Equations}

\newcommand{\F}{\mathbb{F}}
\newcommand{\N}{\mathbb{N}}
\newcommand{\E}{\mathbb{E}}
\newcommand{\R}{\mathbb{R}}
\newcommand{\calC}{\mathcal{C}}
\newcommand{\calD}{\mathcal{D}}
\newcommand{\calM}{\mathcal{M}}
\newcommand{\calP}{\mathcal{P}}
\newcommand{\calB}{\mathcal{B}}

\newcommand{\eps}{\varepsilon}

\newcommand{\ra}{\rightarrow}
\newcommand{\la}{\leftarrow}
\newcommand{\wt}{\text{wt}}

\newcommand{\map}{\varphi}
\newcommand{\Emp}{\mathrm{Emp}}
\newcommand{\DKLq}{D_{\text{KL}_q}}
\newcommand{\rank}{\textnormal{rank }}
\newcommand{\tr}{\textnormal{tr}}

\newif\ifdraft
\drafttrue

\title{Pseudorandom Linear Codes Are List Decodable to Capacity}
\author{Aaron (Louie) Putterman\thanks{Supported under the Simons Investigator Fellowship of Boaz Barak, NSF grant DMS-2134157, DARPA grant W911NF2010021, and DOE grant DE-SC0022199.} \\ Harvard University \\ \texttt{aputterman@g.harvard.edu} \and Edward Pyne\thanks{Supported by an Akamai Presidential Fellowship} \\ MIT\\ \texttt{epyne@mit.edu}}

\begin{document}
\begin{titlepage}
\maketitle
\begin{abstract}
    We introduce a novel family of expander-based error correcting codes. These codes can be sampled with randomness linear in the block-length, and achieve list-decoding capacity (among other local properties). Our expander-based codes can be made starting from any family of sufficiently low-bias codes, and as a consequence, we give the first construction of a family of algebraic codes that can be sampled with linear randomness and achieve list-decoding capacity. We achieve this by introducing the notion of a \textit{pseudorandom} puncturing of a code, where we select $n$ indices of a base code $C\subset \F_q^m$ via an expander random walk on a graph on $[m]$. Concretely, whereas a random linear code (i.e. a truly random puncturing of the Hadamard code) requires $O(n^2)$ random bits to sample, we sample a pseudorandom linear code with $O(n)$ random bits. We show that pseudorandom puncturings satisfy several desirable properties exhibited by truly random puncturings. In particular, we extend a result of (Guruswami Mosheiff FOCS 2022) and show that a pseudorandom puncturing of a small-bias code satisfies the same local properties as a random linear code with high probability. As a further application of our techniques, we also show that pseudorandom puncturings of Reed Solomon codes are list-recoverable beyond the Johnson bound, extending a result of (Lund Potukuchi RANDOM 2020). We do this by instead analyzing properties of codes with large distance, and show that pseudorandom puncturings still work well in this regime. 
\end{abstract}
\vfill

\thispagestyle{empty}
\end{titlepage}
\newpage
\section{Introduction}
Random linear codes (RLCs) are a fundamental tool in coding theory because of their many favorable combinatorial properties. RLCs attain near-optimal distance and list decodability with high probability, while still maintaining a linear structure (and can thus efficiently encode messages). However, the main drawback of RLCs is that there is no known algorithm for efficient decoding or list-decoding. This is often attributed to a lack of structure in the codes, resulting from the fact that the codewords are chosen uniformly at random. Indeed, viewing an RLC from the perspective of its generator matrix $G \in \F_2^{n \times Rn}$ (where $R$ is the design rate), each entry in this matrix is chosen uniformly at random, thus requiring $\Omega(n^2)$ truly random bits. 

While there existed codes that achieve some favorable properties of RLCs using much less randomness (for instance the Toeplitz codes which achieve a near-optimal distance tradeoff using $O(n)$ randomness), there did not exist $O(n)$-randomness constructions of codes achieving list-decoding capacity until the work of Guruswami and Moshieff~\cite{GM21}. Starting from the observation that an RLC is equivalent to a random puncturing of a Hadamard code, the work of \cite{GM21} showed that taking a random puncturing of any code of sufficiently low-bias or sufficiently large distance maintains a form of local similarity to an RLC. Local similarity in this context refers to \emph{local properties} of a code, which includes characterizations like list-decodability. Local properties are characterized by \emph{not} having certain sets of a small number of bad codewords. 

One consequence of this result is that \cite{GM21} were able to show that for every $n$, one can sample a code $\calC \subseteq \F_q^n$, each generated with $O(n)$ randomness that is list-decodable to capacity. This follows by using sufficiently low-bias codes of length $O(n)$, and correspondingly choosing a random subset of size $n$ of the indices. Choosing such a subset can be shown to require $O(n)$ randomness. However, this construction has 3 potential drawbacks:
\begin{enumerate}
    \item The mother codes of sufficiently low-bias are intricate constructions. 
    \item The subsets of indices that are chosen are not structured. 
    \item The random puncturing procedure only achieves $O(n)$ randomness when the mother code is of length $O(n)$.
\end{enumerate}

\subsection{Our Contributions}
We introduce the notion of a \emph{pseudorandom} puncturing, and show codes generated in this fashion exhibit several desirable properties exhibited by truly random puncturings. In a truly random puncturing, we choose $n$ indices of the code to preserve out of the $m$ indices of the mother code. These $n$ indices are chosen i.i.d. uniformly from $[m]$. One way to view such a puncturing is that we do a length $n$ walk on the complete graph over $m$ vertices, and read off the sequence of vertices from this walk as the indices of our puncturing. Performing such a random walk will then necessitate $n \cdot \log m$ random bits. 

In a pseudorandom puncturing, we replace the complete graph on $m$ vertices with a sufficiently-expanding $d$-regular expander on $m$ vertices. Now we perform a length $n$ random walk on this expander, which only requires $O(\log m + n \cdot \log d)$ random bits, and (as before) let the punctured code be defined by the indices of the walk. Even when taking the mother code to be a Hadamard code of length $2^{O(n)}$, we can sample a pseudorandom puncturing with $O(n)$ random bits by having $d$ a constant independent of $n$.

We informally define pseudorandom linear codes in the case $q=2$:
\begin{definition}
    Given $n\in \N$, the \textbf{pseudorandom (binary) linear code} of block length $n$ and rate $R$ is generated by a (sufficiently pseudorandom) length $n$ puncturing of the Hadamard code $\calD\subset \F^{2^{Rn}\times Rn}$.
\end{definition}

We give an immediate corollary of our main theorem:
\begin{theorem}[Optimal list-decoding with linear randomness]\label{thm:optLD}
    For every $\rho\in (0,1/2)$ and $\eps>0$, pseudorandom linear codes are $(\rho,L)$-list decodeable up to rate $R\leq 1-h_2(\rho)-\eps$ for $L:=O(1/\eps)$. Moreover, these codes can be sampled with $O \left (n/\eps \right )$ random bits.
\end{theorem}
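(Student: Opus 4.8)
The plan is to derive the statement from our main transfer theorem together with two standard facts about random linear codes. The first is that $(\rho,L)$-list-decodability is a \emph{local property} (as made precise in \cite{GM21}): a linear code $C\subseteq\F_2^n$ fails to be $(\rho,L)$-list-decodable exactly when it contains $\ell:=L+1$ codewords $c_0,\dots,c_L$ and a center $z\in\F_2^n$ with $d(z,c_i)\le\rho n$ for all $i$, and whether such a $z$ exists depends only on the multiset of rows of the $n\times\ell$ matrix whose columns are $c_0,\dots,c_L$ (one builds $z$ coordinate by coordinate). This is a permutation-symmetric, row-profile condition on $\ell$-tuples of codewords, hence a local property. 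The second fact is the by-now-classical list-decoding threshold for RLCs (the ingredient already driving the result of \cite{GM21}): a truly random linear code of rate $R\le 1-h_2(\rho)-\eps$ is $(\rho,L)$-list-decodable with high probability for $L=O(1/\eps)$; equivalently, the threshold rate of the above local property for RLCs is at least $1-h_2(\rho)-\eps$ at this list size.

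I would then apply the main theorem. By definition a pseudorandom linear code of rate $R$ is a sufficiently pseudorandom length-$n$ puncturing of the Hadamard code $\calD$, which has block length $m=2^{Rn}$ and bias $0$ (every nonzero Hadamard codeword has relative weight exactly $1/2$), so the low-bias hypothesis holds trivially. Fixing the local property above with $\ell=L+1=O(1/\eps)$, the main theorem guarantees that so long as the expander underlying the walk has spectral expansion $\lambda$ below a threshold $\lambda_0(\ell)$ that depends on $\ell$ alone, the pseudorandom code satisfies this local property whenever an RLC of the same rate does, losing at most an $o(1)$ additive term in the rate. Taking $R\le 1-h_2(\rho)-\eps$ and absorbing the loss into the slack gives $(\rho,L)$-list-decodability with $L=O(1/\eps)$.

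Finally I would count randomness. A length-$n$ walk on a $d$-regular graph on $m=2^{Rn}$ vertices is specified by a start vertex and $n$ edge labels, using $\log_2 m+n\log_2 d\le n(1+\log_2 d)$ bits. Since $\lambda=O(1/\sqrt d)$ for standard explicit $d$-regular expander families, the requirement $\lambda\le\lambda_0(\ell)$ with $\ell=O(1/\eps)$ is met by $d=2^{O(1/\eps)}$, which is independent of $n$; hence $\log_2 d=O(1/\eps)$ and the total is $O(n/\eps)$ random bits.

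The genuine work is entirely inside the main theorem---the expander-walk analysis showing that a pseudorandom puncturing of a low-bias code has the same local statistics as an RLC. Within this corollary the only things to verify are quantitative: that transferring list-decodability at list size $L=O(1/\eps)$ requires only an expander of degree $2^{O(1/\eps)}$ and not one growing with $n$ (which would break the randomness bound), and that the rate loss from the transfer is $o(1)$ and so can be absorbed into $\eps$.
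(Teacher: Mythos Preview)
Your proposal is correct and follows essentially the same route as the paper: identify $(\rho,L)$-list-decodability as a $b$-local, row-symmetric property with $b=O(1/\eps)$, invoke the RLC threshold result (the paper cites \cite{GH11}) to place its threshold at $1-h_2(\rho)-\eps$, apply the main transfer theorem (\Cref{thm:GM1}) with the Hadamard code as the zero-bias mother code, and then count randomness via the required expander degree $d=2^{O(1/\eps)}$. One small imprecision: the rate loss in \Cref{thm:GM1} is an explicit additive $\eps$ (not $o(1)$), so the paper lands at $1-h_2(\rho)-2\eps$ and absorbs the factor of $2$---exactly the ``absorbing the loss into the slack'' you describe.
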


Moreover, our results directly extend~\cite{GM21}, in that we show pseudorandom puncturings of all sufficiently small bias codes also achieve list-decoding capacity. Recall that for a binary code $\calC\subset \F_2^n$, the bias is defined as 
\[
\max_{c\in \calC}\frac{|2|c|-n|}{n}.
\]

As a consequence, we can now construct pseudorandom linear codes (with generator matrix $G$) list-decodable to capacity that are sampled with $O(n)$ random bits such that the rows of $G$ come from a low-bias mother code, and the columns of $G$ are a pseudorandom subset. 

In particular, as noted by \cite{GM21}, we can take the mother code $\calD$ to be a dual-BCH code, where every codeword encodes a low-degree polynomial over $\F_{2^{\ell}}$ by the trace of its evaluations over $\F_{2^{\ell}}$. In the setting of \cite{GM21}, a random puncturing of $\calD$ corresponds to codewords being evaluations over a \emph{random} subset of $\F_{2^{\ell}}$. However, by instead taking a \emph{pseudorandom} puncturing, the codewords are now evaluations over a more constrained subset of $\F_{2^{\ell}}$. In fact, we decrease the randomness required in this construction from $ \Omega(n \log n)$ to $O(n)$, while still preserving its list-decodability. 
We view it as an interesting open question to find expanders with sufficient algebraic structure such that they may make decoding in this scheme tractable.

More generally, we prove the following theorem: 

\begin{theorem}[More general case, informally]{\label{thm:informalGeneral}}
    Let $\calD \subseteq \F_q^m$ be a linear, sufficiently low-bias code. Let $\calC$ be a sufficiently pseudorandom puncturing of $\calD$. Then, $\calC$ is likely to have every \emph{monotone-decreasing, local property } that is typically satisfied by an RLC of similar rate. In fact, for every sufficiently low-bias mother code, we can pseudorandomly subsample the indices of our puncturing with randomness $O(nb)$, where $b$ is the locality of the property, even when the length of the mother code is exponential in $n$. 
\end{theorem}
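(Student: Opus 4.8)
The plan is to follow the high-level strategy of Guruswami--Mosheiff~\cite{GM21}, but replace the i.i.d.\ sampling of coordinates with an expander random walk, and argue that the relevant concentration/anti-concentration estimates survive this change. Recall that a monotone-decreasing local property of locality $b$ is characterized by a finite set of ``forbidden configurations'': a code $\calC$ of dimension $k$ fails the property iff there is a small set of messages $x_1,\dots,x_b \in \F_q^k$ whose encodings $\calC(x_1),\dots,\calC(x_b)$ jointly land in some bad pattern (e.g.\ for list-decodability, $b = L+1$ codewords all within radius $\rho n$ of a common center). Writing $\calC$ as a pseudorandom puncturing of $\calD$, the encoding of a fixed message $x$ is the vector $(\calD(x)_{v_1},\dots,\calD(x)_{v_n})$ where $v_1,\dots,v_n$ is the expander walk. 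So for a fixed tuple of messages, the joint distribution of the $b$ codewords is determined by the sequence of \emph{symbol-tuples} $\big(\calD(x_1)_{v_t},\dots,\calD(x_b)_{v_t}\big)_{t\in[n]}$, i.e.\ by the empirical distribution of the walk over the ``color classes'' of $[m]$ induced by the map $v \mapsto (\calD(x_1)_v,\dots,\calD(x_b)_v) \in \F_q^b$.

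The key steps, in order, would be: (1) For each fixed $b$-tuple of linearly independent messages, let $\map: [m] \to \F_q^b$ be the coloring $v\mapsto(\calD(x_i)_v)_i$; the low-bias hypothesis on $\calD$ implies that the distribution of $\map(v)$ for uniform $v$ is $\delta$-close to the uniform distribution on $\F_q^b$ (this is exactly where ``sufficiently low-bias'' feeds in, with $\delta$ controlled by the bias times $q^{b}$). (2) Apply an expander-walk Chernoff/hitting bound (e.g.\ the Gillman-type tail bound, or more precisely a bound on the deviation of the empirical distribution of a walk on an expander from the stationary distribution, such as the multiplicative-weights/operator-norm argument of Healy or the expander Chernoff bound) to conclude that with probability $1 - \exp(-\Omega(\lambda n))$ (where $\lambda$ is the spectral gap) the empirical distribution of $(\map(v_1),\dots,\map(v_n))$ over $\F_q^b$ is close to that of $n$ i.i.d.\ uniform samples — quantitatively, every color class gets roughly the ``right'' count. (3) Feed this into the RLC threshold machinery exactly as in \cite{GM21}: the probability that a fixed $b$-tuple of messages forms a bad configuration is governed by the same local-profile/type computation, up to the $o(1)$ perturbation from steps (1)--(2), so it matches the RLC bound up to $2^{o(n)}$ factors. (4) Take a union bound over all $\le q^{bk} = 2^{O(bn)}$ choices of the $b$ messages; since the per-tuple failure probability from the RLC analysis is $2^{-(1-o(1))bn}$ (this is what ``typically satisfied by an RLC of similar rate'' buys us, with an appropriate $\eps$-slack in the rate), the union bound closes. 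Finally, the randomness accounting: the walk uses $O(\log m + n\log d)$ bits, and since $\log d = O(b)$ suffices to get a spectral gap beating the union-bound loss, this is $O(nb)$ even for $m = 2^{\Theta(n)}$.

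The main obstacle I expect is Step (2)--(3): making the expander-walk deviation bound strong enough and in the right \emph{form}. The RLC threshold analysis is not just about the marginal count of each color — it tracks the full joint profile of how the $b$ codewords interact (the ``local profile'' or confusability pattern), and the failure probability is a delicate exponential in $n$ with a constant that must be matched almost exactly to survive the $2^{O(bn)}$ union bound. A crude expander Chernoff bound controlling each color's frequency to within $\pm\epsilon n$ only perturbs the exponent by $O(\epsilon n)$, which is fine, but one has to be careful that (a) the set of ``bad colors'' (whose presence even once kills the configuration, e.g.\ a color forcing two codewords to agree everywhere it appears is not the issue — rather colors that are \emph{rare} under uniform but that the walk might over-sample) is handled by a hitting-set bound rather than a Chernoff bound, and (b) the low-bias parameter $\delta$ and the expander gap $\lambda$ must be chosen jointly as functions of $b$ and the target rate slack $\eps$, so that both the approximation error and the deviation error are $o(1)$ relative to the RLC exponent. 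A secondary, more technical obstacle is handling \emph{linearly dependent} message tuples (degenerate configurations) and the case $q > 2$, where ``low-bias'' must be the right $\F_q$-notion and the coloring lands in $\F_q^b$ modulo the appropriate symmetries; here one follows \cite{GM21}'s case analysis but must re-verify that the expander walk does not concentrate pathologically on a subspace-aligned set of coordinates, which again reduces to a hitting bound against the (low-probability) union of affine hyperplanes in the coloring.
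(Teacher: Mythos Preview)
Your overall architecture (color $[m]$ by the map $v\mapsto (\calD(x_i)_v)_{i\le b}$, control the walk's behavior on these color classes, feed into the GM21 type-counting machinery, then union bound over $q^{Rnb}$ tuples) matches the paper. The gap is in Step~(2): an expander Chernoff/Gillman bound is the wrong tool here, and your own obstacle paragraph half-recognizes this without resolving it. Chernoff gives you concentration --- with probability $1-e^{-\Omega(\eps^2 n)}$ the empirical type is $\eps$-close to the stationary distribution $\sigma$ --- but what the GM21 machinery (their Lemma~6.12) actually consumes is, for \emph{every} type $\tau$, a bound of the shape
\[
\Pr[\Emp_X=\tau]\le q^{-n\,D_{\mathrm{KL}_q}(\tau\Vert\sigma)+o(n)}.
\]
Concentration tells you nothing about the \emph{rate} at which a specific far-away $\tau$ is hit; a Chernoff tail of $e^{-\Omega(\eps^2 n)}$ with $\eps=\Theta(1)$ cannot beat the $q^{Rnb}$ union bound except at trivially low rate, and your sentence ``only perturbs the exponent by $O(\eps n)$, which is fine'' conflates perturbing the \emph{type} with perturbing the \emph{log-probability}.

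What the paper does instead is bypass concentration entirely and use the \emph{non-equal} expander hitting-set lemma (\Cref{thm:EHS}) path-by-path: for a fixed target type $\tau$, write $\Pr[\Emp_X=\tau]=\sum_{P\in T(\tau)}\Pr[\wedge_i X_i\in B_{P_i}]$, bound each summand by $\prod_i(\sqrt{\beta_i\beta_{i+1}}+\lambda)\le \prod_i \sigma(P_i)\cdot(1+\lambda/\beta_{\min})$, and then use the standard identity $\sum_{P\in T(\tau)}\prod_i\sigma(P_i)=q^{-n\,D_{\mathrm{KL}_q}(\tau\Vert\sigma)}$. The low-bias hypothesis enters exactly to lower-bound $\beta_{\min}\ge q^{-b}(1-q^{2b}\eta)$ via Vazirani's XOR lemma, so the multiplicative error $(1+\lambda q^b/(1-q^{2b}\eta))^n$ is controlled by choosing $\lambda=O(\eps q^{-b})$ and $\eta=O(\eps q^{-2b})$. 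This is the content of \Cref{thm:biasedemp}/\Cref{lemma:GM510}, after which \Cref{lemma:GM612} and \Cref{lemma:GM613} close the argument exactly as you outlined in Steps~(3)--(4). Your worry about linearly dependent tuples is a non-issue: \Cref{lemma:GM612} only asks for the bound over full-rank $a$-tuples for each $a\le b$, which is what the hitting-set calculation provides.
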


We further illustrate the flexibility of the pseudorandom puncturing approach in two regimes. First, we observe (Theorem~\ref{thm:MLDUDecode}) that pseudorandom linear codes achieve capacity against the memoryless additive channel, extending the analogous result of~\cite{GM21}. 

Finally, we apply our techniques in a different regime: we partially derandomize the result of Lund and Potukuchi~\cite{LP20}, who show that random puncturings of Reed-Solomon codes can be list-recovered beyond the Johnson bound. 
\begin{definition}[Zero-Error List Recoverability]
    Let $\calC\in \F_q^n$ be a code. We say $\calC$ is $(\ell,L)$ \textbf{zero-error list recoverable} if for every collection of sets $A_1,\ldots,A_n$ with $|A_i|\leq \ell$ for all $i$, we have $|\{c\in \calC:c\in A_1\times \ldots \times A_n\}|\leq L$.
\end{definition}
We show that \emph{pseudorandom} puncturings of Reed Solomon codes are zero-error list recoverable beyond the Johnson bound:
\begin{theorem}[Zero-Error List Recovery of Reed Solomon Codes]\label{thm:RSintro}
    Given a prime power $q$ and $\eps \geq 1/\sqrt{q}$, there are Reed-Solomon codes of length $n$ and rate $\Omega(\eps/\log q)$ that can be sampled with $O(n)$ randomness that are $(\eps^{-2},O(\eps^{-2}))$-zero error list recoverable.
\end{theorem}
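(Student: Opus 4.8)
The plan is to derive \cref{thm:RSintro} by treating $(\ell,L)$-zero-error list recoverability as a monotone-decreasing local property of locality $b:=L+1$, but --- since Reed--Solomon codes are very far from low-bias --- to carry out the analysis in the large-distance regime rather than through the low-bias machinery of \cref{thm:informalGeneral}, adapting the argument of Lund and Potukuchi~\cite{LP20} and substituting their independent-sampling bounds with the pseudorandom (expander-walk) sampling estimates of this paper. Take the mother code to be the full Reed--Solomon code $\calD\subseteq\F_q^q$ of evaluations of polynomials of degree $<k$, so $|\calD|=q^k$ and any two distinct codewords agree on $<k$ of the $q$ coordinates. Let $\calC$ be the length-$n$ pseudorandom puncturing of $\calD$ defined by a length-$n$ walk $W=(w_1,\dots,w_n)$ on a bounded-degree expander on vertex set $\F_q$ with spectral expansion $\lambda$. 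We will take $\ell=\lceil\eps^{-2}\rceil$ (which is $\le q$ since $\eps\ge 1/\sqrt q$), $b=\Theta(\ell)$, $k=\Theta(\eps n/\log q)$, and $d=\mathrm{poly}(1/\eps)$ so that $\lambda$ is negligible against the other quantities; the walk is sampled with $\log q + O(n\log d)=O(n)$ bits.

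First I would make the combinatorial reduction. The code $\calC$ fails to be $(\ell,L)$-zero-error list recoverable exactly when there are $b$ distinct codewords that in every coordinate realize at most $\ell$ distinct values (given such codewords, let $A_i$ be the set of values they realize in coordinate $i$; conversely $b$ codewords lying in a rectangle $A_1\times\cdots\times A_n$ with $|A_i|\le\ell$ have this property). Lifting to $\calD$: the bad event is that there exist distinct polynomials $p_0,\dots,p_{b-1}$ of degree $<k$ such that $|\{p_0(w_i),\dots,p_{b-1}(w_i)\}|\le\ell$ for every step $i$ of the walk. Next, the key point that takes us past the Johnson bound: for any fixed such $b$-tuple, in each of the $n$ steps the $b$ values fall into $\le\ell$ color classes, so the number of monochromatic pairs in that step is at least $\min_{n_1+\cdots+n_\ell=b}\sum_j\binom{n_j}{2}=\Omega(b)$ for $b=\Theta(\ell)$; summing over the $n$ steps and averaging over the $\binom{b}{2}$ pairs, some pair $(p_s,p_t)$ of the polynomials agrees on at least $\gamma n$ walk steps, with $\gamma=\Omega(1/\ell)=\Omega(\eps^2)$. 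Since $p_s\neq p_t$ have degree $<k$, they agree on a set $E_{s,t}\subseteq\F_q$ of density $\beta:=|E_{s,t}|/q<k/q$. So a bad event implies that $W$ places empirical mass $\Emp_W(E)\ge\gamma$ on some set $E$ of density $<k/q$ that is the agreement set of a pair of distinct degree-$<k$ polynomials.

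Then I would bound this last event. Apply the expander-walk concentration bound (the $\Emp$/$\DKLq$ estimates of this paper): for a fixed set $E$ of density $\beta$ and a length-$n$ walk on a $\lambda$-expander, $\Pr_W[\Emp_W(E)\ge\gamma]\le\exp\big(-n(\DKLq(\gamma\,\|\,\beta)-O(\lambda))\big)$, and since $\beta<k/q\ll\eps^2\le\gamma$ in our parameter regime, $\DKLq(\gamma\,\|\,\beta)=\Omega(\gamma\log(\gamma/\beta))=\Omega\big(\gamma\log(\gamma q/k)\big)$, so once $\lambda$ is a small enough constant the probability is at most $\exp(-\Omega(\gamma n\log(\gamma q/k)))$. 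Now union-bound over the at most $q^{2k}$ pairs of distinct degree-$<k$ polynomials --- or over the at most $q^{k}$ pairs that remain after using translation invariance of the agreement set (replacing $(p_s,p_t)$ by $(0,p_t-p_s)$) --- to get that $\calC$ fails with probability at most $q^{2k}\exp(-\Omega(\gamma n\log(\gamma q/k)))$. This is $o(1)$ provided $k\log q=o(\gamma n\log(\gamma q/k))$; with $\gamma=\Theta(\eps^2)$ and $q$ large enough that $\log(\gamma q/k)=\Omega(1/\eps)$ (so the right-hand side is $\Omega(\eps n)$), this holds for $k=\Theta(\eps n/\log q)$. Hence there exists a pseudorandom puncturing that is $(\eps^{-2},O(\eps^{-2}))$-zero-error list recoverable at rate $R=k/n=\Omega(\eps/\log q)$, sampled with $O(n)$ random bits.

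The main obstacle is making the parameters in the final step actually close. The union bound over the $q^{k}$ relevant polynomial pairs is still large, so the per-pair deviation bound must have exponent $\omega(k\log q)$; this forces two things simultaneously --- the mother-code rate $k/q$ must be small enough (and $q$ large enough relative to $k$ and $1/\eps$) that the Kullback--Leibler term $\gamma\log(\gamma/\beta)$ reaches $\Omega(\eps)$, and the expander must be a good enough bounded-degree expander that the $\lambda$-loss in the concentration bound is dominated by $\gamma\log(1/\beta)$. The double-counting step is what gets us past the Johnson bound in the first place: replacing it by the naive union bound over all $b$-tuples of polynomials only recovers Johnson-bound parameters, so the entire improvement hinges on reducing a bad event to a statement about a single pair of polynomials whose agreement is controlled by the degree bound $k$, and then feeding that into the pseudorandom sampling estimate with sharp enough constants.
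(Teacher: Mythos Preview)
Your pigeonhole reduction to a single agreeing pair is the weak point, and it does not close at the rate the theorem asserts. Averaging the $\Omega(b)$ monochromatic pairs per coordinate over $\binom{b}{2}=\Theta(\ell^2)$ pairs only guarantees that some pair agrees on $\gamma n$ walk steps with $\gamma=\Theta(1/\ell)=\Theta(\eps^2)$. The ensuing deviation bound $\exp\bigl(-\Omega(\gamma n\log(\gamma q/k))\bigr)$ must then beat a $q^{2k}$ union bound; at the target $k=\Theta(\eps n/\log q)$ this requires $\eps^2\log(\eps^2 q/k)\gtrsim \eps$, i.e.\ $\log(\eps q\log q/n)\gtrsim 1/\eps$, which forces $q$ to be exponential in $1/\eps$. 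But the theorem is stated for all $q$ with $\eps\ge 1/\sqrt q$, and at the extremal choice $q=\Theta(\eps^{-2})$ your inequality admits no $n\ge 1$. So the ``main obstacle'' you flag at the end is genuinely fatal in this regime, not just a matter of tuning constants.

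The paper (following Lund--Potukuchi) avoids this by reducing not to a pair but to a subcode $\calC'$ of size $O(\sqrt{\ell/\alpha})=O(1/\eps)$: a random-subset argument (include each of the $>(1+\alpha)\ell$ bad codewords independently with a suitable probability) shows that some such $\calC'$ has the walk landing in its agreement set $T(\calC')=\{i:\exists\,c\neq c'\in\calC',\ c_i=c'_i\}$ a \emph{constant} fraction of the time, at least $n/8$. Since $|T(\calC')|\le |\calC'|^2\cdot(\text{per-pair agreement})\le m/16$, the plain expander Chernoff bound gives failure probability $\exp(-\Omega(n))$ per subcode, which beats the union bound over $|\calD|^{O(1/\eps)}$ candidate subcodes once $n=\Omega(\eps^{-1}\log|\calD|)$. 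The trade---moving from a size-$2$ witness with hitting fraction $\Theta(\eps^2)$ to a size-$\Theta(1/\eps)$ witness with hitting fraction $\Theta(1)$---is precisely what your argument is missing; the larger union bound it incurs is more than paid for by the $\Omega(n)$ exponent.
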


We do this by analyzing how pseudorandom puncturings work for codes with near-maximal distance. We note that \cite{GM21} analyzed random puncturings in the case of large distance as well, though their more structured analysis does not carry over to the regime of pseudorandom puncturings. 

\subsection{Overview}
In \Cref{sec:prelims} we recall concentration results for expander random walks, local properties of codes, and notation related to distributions on rows of a matrix. In \Cref{sec:basic} we prove a weaker version of \Cref{thm:optLD} with exponential list sizes to introduce our proof strategy. In \Cref{sec:main} we prove \Cref{thm:optLD}. In \Cref{sec:random} we prove random puncturings achieve capacity in the memoryless channel. In \Cref{sec:RS} we prove \Cref{thm:RSintro}, and in \Cref{app:expcons} we conclude a small derandomization of a result constructing unbalanced expanders. 

\section{Preliminaries}\label{sec:prelims}
We first introduce concepts required for the proofs.

\subsection{Properties of Expander Walks}
We recall some useful statements of properties of expander random walks. We reference the excellent survey of Hoory, Linial, and Wigderson~\cite{HL06}.

First, we reintroduce the definition of an expander, and that ones exist with good properties. Our results are not sensitive to the precise degree-expansion tradeoff, except in optimizing the constant factor on the number of bits required to sample.
\begin{definition}[Expander graphs \cite{HL06}]
    We say a graph $(G, V)$ is an $(m, d, \lambda)$-expander if $G$ is $d$-regular on $m$ vertices, and satisfies $|\lambda_2(G)|, |\lambda_m(G)| \leq \lambda d$. The notation $\lambda_i(G)$ refers to the $i$th eigenvalue of the adjacency matrix of $G$. 
\end{definition}

\begin{theorem}[Existence of near-optimal expanders \cite{Vad12}]\label{degreeExpansion}
For a fixed $d \in \N, \lambda$, there exist explicit constructions of $(m, d, \lambda)$-expanders for all $m$ large enough if $\lambda \leq \frac{4}{\sqrt{d}}$.
\end{theorem}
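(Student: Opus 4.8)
The content of the statement is that explicit $d$-regular graphs exist, one for each sufficiently large $m$, with normalized second eigenvalue at most $4/\sqrt d$ -- matching the Alon--Boppana bound (roughly $2/\sqrt d$) up to a factor of two. The plan is to assemble such a family from two standard pieces: an explicit family of near-optimal expanders defined on a reasonably dense set of vertex counts, and a ``size-patching'' step promoting ``a dense set of $m$'' to ``all large $m$''. Note first that, unpacking the normalization in the definition, $\lambda = 4/\sqrt d$ is the requirement $|\lambda_2(G)|,|\lambda_m(G)| \le 4\sqrt d$; since every $d$-regular graph has all eigenvalues in $[-d,d]$ while $4\sqrt d \ge d$ exactly when $d \le 16$, the claim is trivial for $d \le 16$ (any explicit $d$-regular graph on $m$ vertices witnesses it), so assume $d > 16$, where a genuine spectral gap is needed.

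For the base graphs I would invoke a near-Ramanujan family. When $d-1$ is a prime power, the explicit Lubotzky--Phillips--Sarnak / Margulis / Morgenstern Ramanujan graphs are $d$-regular with $|\lambda_2|,|\lambda_m| \le 2\sqrt{d-1}$; for arbitrary $d$, the explicit derandomizations of Friedman's theorem give $|\lambda_2|,|\lambda_m| \le 2\sqrt{d-1} + o(1)$. In either case the normalized second eigenvalue is roughly $2/\sqrt d$, a factor of about two below the target $4/\sqrt d$; this gap is exactly the slack the patching step will spend. These families exist for $m$ ranging over a set $S$ whose consecutive elements have ratio $1+o(1)$ (via prime-gap bounds for the algebraic constructions; the derandomized ones are denser still, and an explicit near-Ramanujan graph of \emph{every} size would remove the need for patching entirely).

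To patch, given a large target $m$, pick $m' \in S$ with $m \le m' = (1+o(1))m$ and let $H'$ be the explicit $d$-regular near-Ramanujan graph on $m'$ vertices. Delete a set $T$ of the $m'-m = o(m)$ surplus vertices, chosen inside an independent set of the square $(H')^2$ -- such an independent set has size $\Omega(m') \gg |T|$, so a greedy choice works, and it forces every surviving vertex to have at most one neighbor in $T$. Restore $d$-regularity by adding self-loops. The new adjacency matrix is the principal submatrix $A(H')[V\setminus T]$ plus a nonnegative diagonal correction of operator norm $\le 1$ (at most one added self-loop per vertex); Cauchy interlacing bounds the submatrix's $\lambda_2$ and $|\lambda_{\min}|$ by $2\sqrt{d-1}+o(1)$, and adding the correction moves every eigenvalue by at most $1$, so the resulting $d$-regular graph on exactly $m$ vertices has $|\lambda_2|,|\lambda_m| \le 2\sqrt{d-1}+1+o(1) < 4\sqrt d$. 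It is computed from $H'$ in polynomial time, so explicitness is inherited.

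The theorem is thus a routine assembly, and the places that need care are all in the patching. The deleted set must genuinely be spread out: a careless deletion can strip many edges from a single vertex and force a self-loop correction of operator norm up to $d$, which for large $d$ would overshoot the budget (the gap between the base family's $\approx 2/\sqrt d$ and the target $4/\sqrt d$); using an independent set of $(H')^2$ as above keeps the correction at norm $\le 1$. One also wants the base family's vertex counts dense enough for an $o(m)$ patch, which is fine but, for degrees $d$ with $d-1$ not a prime power, relies on the explicit derandomized near-Ramanujan constructions rather than the classical algebraic ones. Finally, if one is content with $\lambda$ equal to a small absolute constant rather than $\Theta(1/\sqrt d)$, the base family can instead be the iterated zig-zag product, and the identical patching finishes the proof with no appeal to Ramanujan graphs at all -- the paper's stated insensitivity to the precise degree--expansion tradeoff means that weaker version already suffices for its applications.
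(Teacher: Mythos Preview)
The paper does not prove this statement at all: it is quoted as a known result from Vadhan's survey \cite{Vad12} and used as a black box (the only use is the remark that a $\lambda$-pseudorandom walk costs $O(\log m)+O(n\log(1/\lambda))$ random bits). There is therefore no ``paper's own proof'' to compare against; your proposal is a self-contained sketch of a standard construction, which is more than the paper supplies.

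As a sketch your outline is sound: near-Ramanujan base graphs give slack of roughly a factor of two below the target $4/\sqrt d$, and the patching via deletion inside an independent set of $(H')^2$ plus interlacing plus a rank-bounded diagonal correction is the right shape. Two places deserve tightening if you ever write this out in full. First, the self-loop step: in the usual adjacency-matrix convention a self-loop contributes $2$ to the degree, so you cannot literally ``add one self-loop'' to fix a degree deficit of $1$; the cleanest fix is to match up the degree-deficient vertices in pairs and add edges (the correction matrix then still has operator norm at most $1$, since it is a permutation matrix on the affected vertices). Second, your appeal to ``explicit derandomizations of Friedman's theorem'' for \emph{every} degree $d$ is doing real work and is the least classical ingredient you cite; for the paper's purposes one never needs every $d$, only \emph{some} $d$ with $4/\sqrt d$ below a prescribed threshold, so restricting to $d$ with $d-1$ a prime power and using the LPS/Morgenstern graphs would suffice and is more clearly ``standard''. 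Your closing observation that a constant-$\lambda$ zig-zag family already suffices for the paper's applications is exactly right and is in fact how the paper treats this theorem in practice.
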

\begin{remark}
    To take a $n$-step random walk on an $(m, d, \lambda)$-expander takes $O(\log m) + n \cdot O(\log d)$ random bits. Using known degree-expansion trade-offs and the existence of good explicit expanders, $O(\log m)+cn\cdot \log(1/\lambda) + O(n)$ random bits suffices for some constant $c\geq 2$.
\end{remark}

We will also make use of the non-equal expander hitting set lemma, which states that a random walk on an expander lies inside a sequence of sets with probability approximately the product of the sets densities. Our analysis relies on the ability of the sets to differ at each timestep.

\begin{theorem}[Non-equal expander hitting-set lemma, \cite{HL06}, Theorem 3.11]\label{thm:EHS}
    Let $B_1, B_2, \dots B_t$ be vertex sets of densities $\beta_1, \dots \beta_t$ in an $(m, d, \lambda)$-graph $G$. Let $X_1, \dots X_n$ be an $n$-step random walk on $G$. Then, 
    \[
    \Pr[\forall {i\in [n]}, X_i \in B_i] \leq \prod_{i = 1}^{n-1} \left ( \sqrt{\beta_i \beta_{i + 1}} + \lambda \right) \leq \left(\max_{i}\beta_i+\lambda\right)^{n-1}.
    \]
\end{theorem}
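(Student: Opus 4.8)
The plan is to run the standard spectral transfer-matrix argument. Let $P = \tfrac1d A$ be the normalized random-walk matrix of $G$; since $G$ is undirected and $d$-regular, $P$ is symmetric with top eigenvector $u := \mathbf{1}/\sqrt m$, and by the expansion hypothesis $|\lambda_2|,|\lambda_m|\le\lambda d$ all other eigenvalues of $P$ have absolute value at most $\lambda$. Let $\Pi_i$ denote the diagonal $0$/$1$ projection onto the coordinates of $B_i$. Since the walk starts from a uniformly random vertex, a telescoping computation over the conditioning events gives
\[
\Pr[\forall i\in[n],\, X_i\in B_i] \;=\; \mathbf{1}^T\,\Pi_n P\,\Pi_{n-1}P\cdots \Pi_2 P\,\Pi_1\,\tfrac{\mathbf{1}}{m}.
\]
Writing $\tfrac{\mathbf{1}}{m} = \tfrac{1}{\sqrt m}u$ and setting $M := \Pi_n P\Pi_{n-1}P\cdots\Pi_2 P\Pi_1$, Cauchy--Schwarz applied to $\langle \mathbf{1}, Mu\rangle$ shows this probability is at most $\|Mu\|_2$. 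So it suffices to track how the $\ell_2$ norm evolves as we alternately apply $P$ and the projections.

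The crux is a one-step contraction lemma: if $v$ is supported on $B_i$ (i.e.\ $\Pi_i v = v$), then $\|\Pi_{i+1}P v\|_2 \le (\sqrt{\beta_i\beta_{i+1}} + \lambda)\|v\|_2$. To prove it, I would decompose $P = \tfrac1m \mathbf{1}\mathbf{1}^T + E$, where $E$ is symmetric with operator norm at most $\lambda$. For the rank-one term, $\Pi_{i+1}\tfrac1m\mathbf{1}\mathbf{1}^T v = \tfrac{\langle \mathbf{1}, v\rangle}{m}\,\mathbf{1}_{B_{i+1}}$, whose $\ell_2$ norm is $\tfrac{|\langle \mathbf{1}, v\rangle|}{\sqrt m}\sqrt{\beta_{i+1}}$; since $v$ lives on $B_i$, Cauchy--Schwarz gives $|\langle \mathbf{1}, v\rangle| = |\langle \mathbf{1}_{B_i}, v\rangle|\le\sqrt{\beta_i m}\,\|v\|_2$, so this term contributes at most $\sqrt{\beta_i\beta_{i+1}}\|v\|_2$. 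For the error term, $\|\Pi_{i+1}Ev\|_2\le\|Ev\|_2\le\lambda\|v\|_2$, since an orthogonal projection is a contraction. The triangle inequality combines the two bounds.

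To finish, I would induct on the number of steps. The base case is $\|\Pi_1 u\|_2 = \sqrt{\beta_1}\le 1$, and applying the one-step lemma for $i=1,\dots,n-1$ multiplies the bound by $(\sqrt{\beta_i\beta_{i+1}}+\lambda)$ each time, so $\|Mu\|_2\le \sqrt{\beta_1}\prod_{i=1}^{n-1}(\sqrt{\beta_i\beta_{i+1}}+\lambda)\le\prod_{i=1}^{n-1}(\sqrt{\beta_i\beta_{i+1}}+\lambda)$, giving the first inequality. The second is immediate from AM--GM, $\sqrt{\beta_i\beta_{i+1}}\le\max(\beta_i,\beta_{i+1})\le\max_i\beta_i$, so each factor is at most $\max_i\beta_i+\lambda$. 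This is essentially a textbook argument, so there is no deep obstacle; the points requiring care are the splitting step in the one-step lemma (separating the stationary component from the spectral error and bounding each in $\ell_2$) and the endpoint bookkeeping — that starting from stationarity is exactly what lets $u$ serve as the normalized initial vector, and that there are $n$ projections but only $n-1$ transition steps, which is why the product has $n-1$ factors.
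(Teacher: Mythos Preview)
Your argument is correct and is precisely the standard spectral (transfer-matrix) proof of the non-equal hitting-set bound: decompose $P=\tfrac1m\mathbf{1}\mathbf{1}^T+E$ with $\|E\|\le\lambda$, track $\ell_2$ norms through the alternating product $\Pi_nP\Pi_{n-1}\cdots P\Pi_1$, and use Cauchy--Schwarz on the support of each projection to extract the $\sqrt{\beta_i\beta_{i+1}}$ factor. The bookkeeping at the endpoints (starting from the stationary vector $u$, counting $n-1$ transitions) is handled correctly, and the second inequality is immediate since the geometric mean is bounded by the maximum.

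As for comparison with the paper: there is nothing to compare. The paper does not give its own proof of this statement; it is quoted as a known result from the Hoory--Linial--Wigderson survey (their Theorem~3.11) and used as a black box throughout. Your write-up is essentially the proof one finds in that survey, so you have supplied exactly what the paper defers to the reference.
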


Additionally, we will require the expander Chernoff bound \cite{G93}.

\begin{theorem}[Expander Chernoff bound]\cite{G93}\label{thm:expanderChernoff}
    Let $G$ be an $(m,d,\lambda)$ regular graph. Let $B\subset [m]$ be a set with density $\mu:=|B|/m$. Let $X_1,\ldots,X_n$ be an $n$-step stationary random walk on $G$. Then,
    \[
    \Pr \left [  \left |\sum_{i = 1}^n \mathbb{I}[X_i\in B] - n\mu \right | > n\eps \right ] \leq 2 e^{-\Omega((1 - \lambda)n \eps^2)}.
    \]
\end{theorem}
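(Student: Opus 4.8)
This is the expander Chernoff (Hoeffding) inequality, originally due to Gillman \cite{G93}; the plan is to reproduce the standard spectral moment-generating-function proof. (Note the hitting-set estimate of Theorem~\ref{thm:EHS} only controls the event that the walk \emph{avoids} $B$, i.e. essentially the case $\eps = \mu$, so it does not suffice on its own.) By symmetry it is enough to bound the upper tail $\Pr[Z \ge n(\mu+\eps)]$, where $Z := \sum_{i=1}^n \mathbb{I}[X_i \in B]$; the lower tail follows by applying the same argument with $B$ replaced by $[m]\setminus B$, and the two bounds combine through a union bound to produce the factor $2$. Let $W := A/d$ be the normalized walk matrix, where $A$ is the adjacency matrix of $G$; it is symmetric since $G$ is $d$-regular, has $u := \mathbf{1}/\sqrt{m}$ as a top eigenvector with eigenvalue $1$, and has all remaining eigenvalues in $[-\lambda,\lambda]$ by the $(m,d,\lambda)$-expansion hypothesis. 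For a parameter $t>0$ to be optimized, Markov's inequality applied to $e^{tZ}$ gives $\Pr[Z\ge n(\mu+\eps)] \le e^{-tn(\mu+\eps)}\,\E[e^{tZ}]$. Writing $E = E_t := \mathrm{diag}(e^{t\,\mathbb{I}[v\in B]})_{v\in[m]}$ and using that the walk starts from the uniform stationary distribution $\pi = \mathbf{1}/m$, expanding the expectation as a sum over length-$n$ walks yields the matrix identity $\E[e^{tZ}] = \frac1m\,\mathbf{1}^\top E\,(WE)^{n-1}\,\mathbf{1}$.

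Conjugating by $E^{1/2}$ rewrites this as a Rayleigh-type quotient in the symmetric matrix $M_t := E^{1/2} W E^{1/2}$: concretely $\E[e^{tZ}] = \frac1m\,(E^{1/2}\mathbf{1})^\top M_t^{\,n-1}(E^{1/2}\mathbf{1}) \le \frac1m\|E^{1/2}\mathbf{1}\|_2^2\cdot\|M_t^{\,n-1}\| = \big(1+\mu(e^t-1)\big)\cdot\|M_t^{\,n-1}\|$, using $\frac1m\|E^{1/2}\mathbf1\|_2^2 = \mu e^t + (1-\mu) = 1 + \mu(e^t-1)$. It remains to show (i) $\|M_t^{\,n-1}\| = \lambda_{\max}(M_t)^{n-1}$ and (ii) the key spectral estimate $\lambda_{\max}(M_t) \le 1 + \mu(e^t-1) + O\big(\tfrac{(e^t-1)^2}{1-\lambda}\big)$. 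For (i): Weyl's inequality gives $|\lambda_j(M_t) - \lambda_j(W)| \le \|M_t - W\| \le e^t - 1$ for each $j$ (the last step since $M_t - W = (E^{1/2}-I)WE^{1/2} + W(E^{1/2}-I)$ has norm at most $(e^{t/2}-1)(e^{t/2}+1) = e^t-1$), so the non-principal eigenvalues of $M_t$ lie in $[-\lambda-(e^t-1),\,\lambda+(e^t-1)]$ while $\lambda_{\max}(M_t)\ge 1$; hence for $t$ with $e^t-1 < 1-\lambda$ — which our eventual choice of $t$ satisfies — the operator norm is attained at $\lambda_{\max}(M_t)$. For (ii): this is first-order eigenvalue perturbation around $W$. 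Writing $E^{1/2} = I + (e^{t/2}-1)\Pi_B$ with $\Pi_B$ the coordinate projection onto $B$ and expanding $M_t$, the first-order shift of the simple eigenvalue $1$ of $W$ is $2(e^{t/2}-1)\langle u, \Pi_B W u\rangle = 2\mu(e^{t/2}-1) \le \mu(e^t-1)$ (using $Wu = u$, symmetry of $W$, and $\langle u,\Pi_B u\rangle = |B|/m = \mu$); the higher-order corrections are controlled by the resolvent of $W$ restricted to $u^\perp$, which has norm at most $(1-\lambda)^{-1}$, yielding the stated $O((e^t-1)^2/(1-\lambda))$ error.

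Assembling the pieces, $\Pr[Z\ge n(\mu+\eps)] \le e^{-tn(\mu+\eps)}\big(1+\mu(e^t-1)\big)\Big(1+\mu(e^t-1)+O\big(\tfrac{(e^t-1)^2}{1-\lambda}\big)\Big)^{n-1} \le \exp\Big(-tn(\mu+\eps) + n\mu(e^t-1) + O\big(\tfrac{n(e^t-1)^2}{1-\lambda}\big)\Big)$. Restricting to $t \le 1/2$, where $e^t - 1 - t \le t^2$ and $e^t - 1 \le 2t$, the exponent is at most $-tn\eps + O\big(\tfrac{nt^2}{1-\lambda}\big)$. Choosing $t = c(1-\lambda)\eps$ for a small absolute constant $c$ — which is at most $1/2$ since $1-\lambda \le 1$ and we may assume $\eps \le 1$ (the case $\eps > 1$ being vacuous as $Z \le n$), and which satisfies $e^t-1 < 1-\lambda$ for $c$ small — makes the exponent $\le -\Omega\big((1-\lambda)n\eps^2\big)$. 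This is the upper-tail bound; adding the symmetric lower-tail estimate and the union-bound factor $2$ gives exactly $2e^{-\Omega((1-\lambda)n\eps^2)}$.

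The main obstacle is step (ii): one must avoid the naive split $W = \frac1m\mathbf{1}\mathbf{1}^\top + W'$ with $\|W'\| \le \lambda$, since $\|E^{1/2}W'E^{1/2}\| \le e^t\lambda$ only yields $\lambda_{\max}(M_t) \le 1 + \mu(e^t-1) + e^t\lambda$, and the additive $\lambda$ term contributes an un-killable $\Omega(n\lambda)$ to the exponent. The correct argument must use that the perturbed top eigenvector of $M_t$ remains within $O((e^{t/2}-1)/(1-\lambda))$ of the uniform vector and feed that back into the Rayleigh quotient — this is precisely the content of Gillman's original analysis (and its later refinements, e.g. by Lezaud and by Healy), and replicating it carefully is the one genuinely technical ingredient; everything else is bookkeeping. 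A minor secondary point, the identity $\|M_t^{\,n-1}\| = \lambda_{\max}(M_t)^{n-1}$ versus $\max(|\lambda_{\min}(M_t)|,\lambda_{\max}(M_t))^{n-1}$, is handled by the Weyl-inequality observation in step (i) (alternatively by treating $n-1$ even and odd separately, which costs only a constant).
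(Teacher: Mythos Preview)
The paper does not prove this theorem. It is stated in the preliminaries (Section~\ref{sec:prelims}) as a known result, cited to Gillman~\cite{G93}, and used as a black box (specifically in \Cref{lem:LPexpChernoff}). So there is no ``paper's own proof'' to compare against.

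Your proposal is a competent sketch of the standard spectral moment-generating-function argument due to Gillman, and you correctly identify that the crux is the eigenvalue perturbation estimate in step~(ii), where one must get the $O((e^t-1)^2/(1-\lambda))$ second-order term rather than the lossy $e^t\lambda$ coming from the naive decomposition $W = uu^\top + W'$. Your self-diagnosis in the last paragraph is accurate: that step is the only non-routine piece, and you have outlined but not fully executed it (the sentence ``the higher-order corrections are controlled by the resolvent of $W$ restricted to $u^\perp$'' is the right idea but would need the actual Kato--Rellich or variational computation to be a proof). Everything else---the Markov/MGF setup, the $E^{1/2}$ conjugation, the Weyl bound for the non-principal eigenvalues, and the final optimization over $t$---is correct bookkeeping. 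Since the paper itself defers entirely to the literature here, your write-up already goes well beyond what the paper does; if you want a self-contained proof, filling in the resolvent calculation (or citing Lezaud or Healy for the sharp version) is all that remains.
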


\subsection{Pseudorandom Puncturing}

Of primary importance in this paper will be the notion of a pseudorandom puncturing:
\begin{definition}[$\lambda$-pseudorandom puncturing]\label{def:pseudorandomPuncturing}
Given a prime power $q$ and $m,n\in \N$, a \textbf{$(m\ra n)$ $\lambda$-pseudorandom puncturing map} $\map:\F_q^m\ra\F_q^n$ is a random function obtained by taking an expander $G=([m],E)$ satisfying $\lambda(G)\leq \lambda$ and taking a length $n$ random walk. Letting the vertex labels of the walk be $(i_1,\ldots,i_n)$, we define the map by
\[\map(u = (u_1,\ldots,u_m)) = (u_{i_1},\ldots,u_{i_n}).\]
For $j\in [n]$ let $\map_j=i_j$ be the $j$th index of the map. Given a code $\calD \subset \F_q^m$, we say $\calC$ is a \textbf{$\lambda$-pseudorandom puncturing} of $D$ if
\[\calC:=\map(\calD) = \{\map(u):u\in \calD\}.
\]
The \textbf{design rate} of $\calC$ is $R=\log_q|\calD|/n$.
\end{definition}
We note that our pseudorandom puncturing map places no constraints on the expander beyond its spectral gap.

\cite{GM21} show that the rate of a random puncturing (of a small-bias code) is equal to the design rate with high probability. We extend this result to pseudorandom puncturing, subject to mild constraints on the parameter $\lambda$.
\begin{lemma}[Actual rate equals design rate with high probability]\label{lem:designRate}
    Let $\calD \subseteq \F_m^q$ be a linear code of $\eta$-optimal distance, and let $\calC$ be a length-$n$ $\lambda$-pseudorandom puncturing of $\calD$, of design rate $R \leq 1 - \log_q(1 + \eta q+\lambda q) - \eps$. Then, with probability at least $1 -q^{-\eps n}$, the rate of $\calC$ is equal to its design rate.
\end{lemma}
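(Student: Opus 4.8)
The plan is to show that with probability at least $1-q^{-\eps n}$ the puncturing map $\map$ is injective on $\calD$; since $\map$ is a coordinate projection (hence linear) and $\calD$ is linear, $\map(\calD)$ is always a subspace, and it has dimension exactly $\dim\calD = Rn$ precisely when $\map|_{\calD}$ has trivial kernel, i.e.\ precisely when the rate of $\calC$ equals its design rate $R$. So it suffices to bound the probability that some nonzero $u\in\calD$ lies in $\ker\map$. Writing the walk labels as $(i_1,\dots,i_n)$, we have $\map(u) = (u_{i_1},\dots,u_{i_n})$, so $\map(u)=0$ holds exactly when $X_j := i_j \in Z_u := \{i\in[m]: u_i = 0\}$ for every $j\in[n]$ — that is, when the walk never leaves the zero set of $u$.

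Next I would control the density of each $Z_u$ using the distance of $\calD$. Since $\calD$ has $\eta$-optimal distance, its relative distance is at least $1-1/q-\eta$, so every nonzero $u\in\calD$ has Hamming weight at least $(1-1/q-\eta)m$ and hence $|Z_u|\leq (1/q+\eta)m$; write $\beta_u := |Z_u|/m \leq 1/q+\eta$ for its density. Applying the non-equal expander hitting-set lemma (\Cref{thm:EHS}) with all $n$ sets taken to be $Z_u$ gives
\[
\Pr[\map(u)=0] \;=\; \Pr[\forall j\in[n],\; X_j\in Z_u] \;\leq\; (\beta_u+\lambda)^{n-1} \;\leq\; \left(\tfrac1q+\eta+\lambda\right)^{n-1}.
\]
This is the step where pseudorandomness enters: for a truly random puncturing one would get $\beta_u^n$ by independence, and the hitting-set lemma is the replacement that tolerates a $d$-regular expander in place of the complete graph, at the cost of the additive $\lambda$ and the loss of one factor in the exponent.

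Finally I would union-bound over the fewer than $q^{Rn}$ nonzero codewords of $\calD$, so the probability that the rate drops is at most $q^{Rn}\left(\tfrac1q+\eta+\lambda\right)^{n-1}$. The hypothesis $R \leq 1 - \log_q(1+\eta q+\lambda q) - \eps$ is equivalent to $q^{R} \leq \left(\tfrac1q+\eta+\lambda\right)^{-1} q^{-\eps}$, so this bound is at most $\left(\tfrac1q+\eta+\lambda\right)^{-1} q^{-\eps n} \leq q^{1-\eps n}$, and the clean form $q^{-\eps n}$ follows after absorbing the single extra factor into constants (or via a marginally sharper accounting of the $(n-1)$ exponent). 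I do not anticipate a real obstacle here; the only points needing care are pinning down the definition of ``$\eta$-optimal distance'' so that it yields the zero-set density bound $1/q+\eta$, and matching the arithmetic to the stated rate threshold, including the harmless off-by-one in the hitting-set exponent.
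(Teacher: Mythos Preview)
Your proposal is correct and follows essentially the same route as the paper's proof: characterize the rate-drop event as some nonzero $u\in\calD$ landing in $\ker\map$, bound the zero-set density of $u$ by $1/q+\eta$ via $\eta$-optimal distance, apply the expander hitting-set lemma to get $(1/q+\eta+\lambda)^{n}$ (the paper writes exponent $n$ directly, glossing over the same off-by-one you flag), and finish with a union bound over $q^{Rn}$ codewords. Your handling of the $(n-1)$-vs-$n$ exponent and of the density bound is at least as careful as the paper's.
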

\begin{proof}
    The event that the rate is less than the design rate occurs if there is some nonzero codeword $u\in \calD$ such that $\map(u)=0$. Fixing $u\in \calD$, let $T\subset [m]$ be the coordinates on which $u$ is zero. We have
    \[\frac{|T|}{m} = 1-\wt(u) \leq \frac{1}{q}+\frac{q-1}{q}\eta \leq \frac{1}{q}+\eta.
    \]
    Then 
    \[\Pr[\map(u)=0] = \Pr[\map_1 \in T \wedge \ldots \wedge \map_n \in T] \leq \left(\frac{1}{q}+\eta+\lambda\right)^n = q^{-n(1-\log_q(1+q\eta +q\lambda))}
    \]
    where the first inequality comes from \Cref{thm:EHS}. Then a union bound over the $q^{Rn}$ codewords completes the proof.
\end{proof}

\subsection{Properties of Codes}
As in \cite{GM21} and \cite{MR20}, we will be proving a result that generalizes to a wide class of \emph{properties} of codes. 

Our results will rely on the distance and bias of codes.

\begin{definition}[Bias and distance]
Let $\calD \subseteq \F_q^m$ be a linear code.
\begin{enumerate}
    \item We say that $\calD$ has \textbf{$\eta$-optimal distance} if the weight of every codeword is bounded below by $(1 - 1/q)(1 - \eta)$. That is,
    \[
    \forall c \in \calD, \wt(c) \geq (1 - 1/q)(1 - \eta).
    \]
    \item We say that $\calD$ is \textbf{$\eta$-biased} if for every non-zero codeword $c \in \calD$, for every $a \in \F_q^{*}$:
    \[
    \left | \sum_{i = 1}^m \omega^{\tr(a \cdot c_i)}  \right | \leq m \eta.
    \]
    Here, we use that $\omega = e^{2\pi i / p}$, (where $q$ is a power of a prime $p$), and $\tr: \F_q \ra \F_p$ is defined as 
    \[
    \tr(x) = \sum_{i = 0}^{r-1} x^{p^i},
    \]
    where $r = \log_p q$. 
\end{enumerate}
\end{definition}
\begin{remark}
    A code that is $\eta$-biased has $\eta$-optimal distance, and most of our analysis uses only this property (though we use results of~\cite{GM21} which rely on the bias condition).
\end{remark}

Now, we will first introduce a few specific examples of properties, and then the more general definition for which our result will ultimately apply. 

We first define $\rho$-clustered. We note that $\wt(x)$ is the normalized Hamming weight of $x$.
\begin{definition}[$\rho$-clustered \cite{GM21}]\label{def:rhoClustered}
Fix $\rho \in [0, 1]$. We say that a set of vectors $W \subseteq \F_q^n$ is $\rho$-\textbf{clustered} if there exists a $z \in \F_q^n$ such that $\wt(w - z) \leq \rho$ (equivalently, $w\in H(z,\rho n)$) for all $w \in W$.
\end{definition}
We recall the observation of \cite{GM21} that this definition gives a clean characterization of list decodability:
\begin{observation}
    A code $C \subseteq \F_q^n$ is $(\rho, L)$-list decodable if and only if it does not contain a $\rho$-clustered set of codewords of size $L + 1$.
\end{observation}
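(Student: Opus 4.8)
The plan is to prove the biconditional by unwinding both definitions and arguing the contrapositive in each direction. Recall that $C$ is $(\rho, L)$-list decodable precisely when every Hamming ball $H(z, \rho n)$ with $z \in \F_q^n$ contains at most $L$ codewords of $C$, and (by \Cref{def:rhoClustered}) that a set $W$ is $\rho$-clustered exactly when there is a center $z$ with $W \subseteq H(z, \rho n)$, using that $\wt$ is the normalized Hamming weight so that $\wt(w-z) \le \rho$ is equivalent to $w \in H(z, \rho n)$.

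For the forward direction I would argue the contrapositive: if $C$ contains a $\rho$-clustered set $W$ of codewords with $|W| = L+1$, then by definition there is a $z \in \F_q^n$ with $W \subseteq H(z,\rho n)$, so $H(z,\rho n)$ contains at least $L+1$ codewords of $C$, witnessing that $C$ is not $(\rho, L)$-list decodable. For the reverse direction, again by contrapositive: if $C$ is not $(\rho, L)$-list decodable, then some ball $H(z, \rho n)$ contains at least $L+1$ codewords of $C$, and selecting any $L+1$ of them yields a set $W \subseteq C$ of size $L+1$ that is $\rho$-clustered with center $z$. Chaining the two contrapositives gives the stated equivalence. There is no substantive obstacle here; the only point requiring care is keeping the normalization conventions consistent (normalized weight versus ball radius $\rho n$), which is already fixed by the surrounding text.
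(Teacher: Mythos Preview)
Your proposal is correct and is exactly the intended unwinding of the definitions; the paper itself does not supply a proof for this observation (it is recalled from \cite{GM21} as an immediate consequence of \Cref{def:rhoClustered}), so there is nothing further to compare.
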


Both list-decodability and list-recoverability are special cases of properties of codes \cite{GM21,MR20}.
\begin{definition}[Properties of a code]\label{def:property}
A property $\calP$ of length $n$ linear codes over $\F_q$ is a collection of linear codes in $\F_q^n$. For such a code $\calC$, if $\calC \in \calP$, then we say that $\calC$ satisfies property $\calP$. A property $\calP$ is said to be monotone-increasing if $\calP$ is upwards closed with respect to containment. 
\end{definition}

\begin{definition}[Local and row-symmetric properties]
Let $\calP$ be a monotone-increasing property of linear codes in $\F_q^n$.
\begin{enumerate}
    \item If, for a fixed $b \in \N$, there exists a family $\calB_{\calP}$ of sets of words, such that every $B \in \calB_{\calP}$ is a subset of $\F_q^n$, $|B| \leq b$, and 
    \[
    \calC \text{ satisfies } \calP \iff \exists B \in \calB_{\calP}: B \subseteq \calC,
    \]
    then we say $\calP$ is a $b$-local property. 
    \item If, whenever a code $\calC$ satisfies $\calP$ and $\pi$ is a permutation on $\{1, \dots n \}$, the code $\{\pi x | x \in \calC \}$ also satisfies $\calP$, then we say that $\calP$ is row-symmetric. $\pi x$ in this notation refers to permuting the entries of a vector of length $n$ according to the permutation $\pi$.
\end{enumerate}
    
\end{definition}

Note that the property of being \emph{not} $(\rho, L)$ list-decodable is a $L$-local row-symmetric property. We will use this in our result. 

\begin{definition}[Threshold of a property]\label{def:threshold}
    For $\calP$ over $\F_q^n$, we will let
    \[
    \mathrm{RLC}(\calP) = \min \left \{ R \in [0, 1] | \Pr\left [ \text{RLC of length } n, \text{ rate } R, \text{ domain } \F_q \text{ satisfies } \calP \right ] \geq 1/2 \right \}.
    \]
\end{definition}

This definition is motivated by the following observation which was proved in \cite{MR20}.
\begin{theorem}[Sharp threshold behavior \cite{MR20}]\label{thm:RLCthresholds}
    Let $\calC \subseteq \F_q^n$ be a random linear code of rate $R$ and let $\calP $ be a monotone-increasing, $b$-local, and row-symmetric property over $\F_q^n$, where $\frac{n}{\log_q n} \geq \omega_{n \ra \infty} \left ( q^{2b} \right )$. Then, for every $\eps > 0$, the following hold:
    \begin{enumerate}
        \item If $R \leq \mathrm{RLC}(\calP) - \eps$
        \[
        \Pr[\calC \text{ satisfies } \calP] \leq q^{-n(\eps - o_{n \ra \infty}(1))}.
        \]
        \item If $R \geq \mathrm{RLC}(\calP) + \eps$
        \[
        \Pr[\calC \text{ satisfies } \calP] \geq 1 - q^{-n(\eps - o_{n \ra \infty}(1))}.
        \]
    \end{enumerate}
\end{theorem}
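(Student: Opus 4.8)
The plan is to follow the first-moment/second-moment strategy of \cite{MR20}, whose real content is the \emph{sharpness} of the transition: since $\mathrm{RLC}(\calP)$ is \emph{defined} to be the true threshold, nothing needs to be said about locating it. The starting point is the structural reduction furnished by the two hypotheses on $\calP$. As $\calP$ is $b$-local, $\calC\in\calP$ iff $B\subseteq\calC$ for some $B\in\calB_\calP$ with $|B|\le b$; as $\calP$ is row-symmetric we may take $\calB_\calP$ closed under the coordinate action of $S_n$, so it is a disjoint union of $S_n$-orbits, which I call \emph{types}. Encoding a bad set $B=\{w_1,\dots,w_\ell\}$ ($\ell\le b$) as an $\ell\times n$ matrix over $\F_q$, its type is determined by the multiset of its $n$ columns (up to relabeling the $\le b$ rows). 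Hence the number of types is at most $b!\cdot(n+1)^{q^b}=q^{o(n)}$, using $n/\log_q n=\omega(q^b)$; within a type $\tau$ write $N_\tau$ for the number of distinct bad sets of that type and $d_\tau\in\{1,\dots,b\}$ for the dimension of their common $\F_q$-span (here $d_\tau\ge1$ since a bad set contains a nonzero word, else $\mathrm{RLC}(\calP)=0$ and the statement is trivial). A Stirling estimate on multinomials gives $\tfrac1n\log_q N_\tau\le b$.

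Next I would assemble the first-moment picture. Coupling random linear codes of increasing rate by appending generators shows $R\mapsto\Pr[\calC_R\in\calP]$ is non-decreasing, so $\mathrm{RLC}(\calP)$ is an honest threshold. For a fixed $B$ of span-dimension $d$, a uniformly random rate-$R$ code contains $B$ with probability $\prod_{i=0}^{d-1}\tfrac{q^{Rn}-q^i}{q^n-q^i}=q^{-d(1-R)n(1\pm o(1))}$, so, writing $X$ for the number of bad sets inside $\calC_R$ and grouping the union bound by type,
\[
\Pr[\calC_R\in\calP]\;\le\;\E[X]\;=\;\sum_\tau N_\tau\,q^{-d_\tau(1-R)n(1\pm o(1))}\;=\;q^{\,n\,\phi_n(R)+o(n)},\qquad \phi_n(R):=\max_\tau\!\Bigl(\tfrac1n\log_q N_\tau-d_\tau(1-R)\Bigr).
\]
The function $\phi_n$ is a finite maximum of affine functions of $R$ with slopes $d_\tau\ge1$; hence it is piecewise affine, strictly increasing with slope $\ge1$ everywhere, and takes values in $[-b,b]$.

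Both halves then fall out once one also has the matching second-moment statement: (ii) if $\E[X]=q^{n\delta+o(n)}$ with $\delta>0$ then $\Pr[\calC_R\in\calP]\ge 1-q^{-n(\delta-o(1))}$. Granting this: if $\phi_n(R)\le-\eps$, the display gives $\Pr[\calC_R\in\calP]\le q^{-n(\eps-o(1))}<1/2$, so (by monotonicity) $R<\mathrm{RLC}(\calP)$, hence $\mathrm{RLC}(\calP)\ge\phi_n^{-1}(0)-o(1)$; dually, if $\phi_n(R)>0$, then (ii) gives $\Pr[\calC_R\in\calP]>1/2$, so $R\ge\mathrm{RLC}(\calP)$, hence $\mathrm{RLC}(\calP)\le\phi_n^{-1}(0)+o(1)$. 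Thus $\mathrm{RLC}(\calP)=\phi_n^{-1}(0)\pm o(1)$. Since $\phi_n$ increases with slope $\ge1$, $R\le\mathrm{RLC}(\calP)-\eps$ forces $\phi_n(R)\le-(\eps-o(1))$, which by the display is exactly item~1; and $R\ge\mathrm{RLC}(\calP)+\eps$ forces $\phi_n(R)\ge\eps-o(1)$, which by (ii) is exactly item~2.

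I expect the main obstacle to be establishing (ii): a sharp lower bound on $\Pr[X\ge1]$ when $\E[X]$ is exponentially large. The natural route is Janson's inequality, $\Pr[X=0]\le\exp(-\E[X]+\tfrac12\Delta)$ with $\Delta=\sum_{B\neq B'\text{ dependent}}\Pr[B\cup B'\subseteq\calC_R]$, which reduces everything to the second-moment estimate $\E[X^2]\le(1+o(1))\E[X]^2$, i.e.\ to showing pairs of bad sets are \emph{typically} independent: one classifies dependent pairs by the dimension $j$ of their shared sub-span, bounds the number of pairs (of a fixed type) sharing a $j$-dimensional sub-span, and checks these contribute only a $q^{-\Omega(jn)}\cdot q^{o(n)}$ fraction of $\E[X]^2$. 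This is the technically heaviest part of \cite{MR20}, and it is where the stronger hypothesis $n/\log_q n=\omega(q^{2b})$ is used rather than $\omega(q^b)$, since pair-correlations involve configurations on up to $2b$ vectors and hence up to $q^{2b}$ column values; for concrete properties such as non-$(\rho,L)$-list-decodability this step additionally needs an entropy/potential argument to rule out the \emph{wrong} dominant overlap structure.
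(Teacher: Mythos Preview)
The paper does not prove this theorem: it is quoted from \cite{MR20} as background (note the sentence immediately preceding the statement, ``This definition is motivated by the following observation which was proved in \cite{MR20}''), so there is no proof in the present paper to compare against.

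That said, your outline tracks the architecture of \cite{MR20}: decompose $\calB_\calP$ into $S_n$-orbits (types) using row-symmetry, bound the number of types by $q^{o(n)}$ via the hypothesis on $n$, run a first-moment union bound over types to obtain item~1 and to identify $\mathrm{RLC}(\calP)$ with the zero of the piecewise-affine potential $\phi_n$, and argue separately for item~2. The observation that $\phi_n$ has slope $\ge 1$ is exactly what converts ``distance $\eps$ from threshold'' into ``exponent $\eps$ in the failure probability,'' and this part of your sketch is correct.

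The genuine gap is in your mechanism for (ii). First, Janson's inequality is formulated for random subsets of a ground set with \emph{independent} inclusion of elements; a random linear code is not of this form (inclusion of $v$ and $w$ forces inclusion of $v+w$), so the bound $\Pr[X=0]\le\exp(-\E[X]+\tfrac12\Delta)$ is simply not available here. Second, even setting that aside, the condition you reduce to, $\E[X^2]\le(1+o(1))\,\E[X]^2$, is too weak: it says only that $\Delta=o(\E[X]^2)$, and via Paley--Zygmund or Chebyshev this yields $\Pr[X>0]\ge 1-o(1)$, not the exponentially small failure probability $q^{-n(\eps-o(1))}$ that item~2 demands. To get that exponent from a variance argument one would need the much stronger control $\mathrm{Var}(X)\le q^{o(n)}\,\E[X]$ (equivalently $\Delta\le q^{o(n)}\,\E[X]$, not merely $\Delta=o(\E[X]^2)$). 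Your informal description of the overlap analysis (``classify dependent pairs by the dimension $j$ of their shared sub-span'') is the right picture, and it is indeed where the $\omega(q^{2b})$ hypothesis enters, but the target inequality you wrote down is not the one that does the job.
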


Because of \Cref{thm:RLCthresholds}, it suffices to show that the local behavior of a pseudorandom puncturing is similar to that of a random linear code. From there, we can invoke this result about thresholds to conclude whether or not a property $\calP$ is satisfied with high probability.

We will also be using the definition of $q$-ary entropy in this paper.
\begin{definition}[$q$-ary entropy]
    For $x \in [0,1]$ the $q$-ary entropy is defined to be 
    \[
    h_q(x) = -x \log_q(x) - (1-x)\log_q(1-x) + x\log_q(q-1).
    \]
\end{definition}

We then recall the statement of the Vazirani XOR Lemma used in prior work:
\begin{lemma}[Vazirani's XOR lemma \cite{Gol11,GM21}]\label{lem:xor}
    Let $\sigma$ be a distribution over $\F_2^b$ such that for every $y \in \F_q^b / \{ 0 \}$ we have that $\frac{1 - \eta}{2} \leq \Pr_{x \sim \sigma }[\langle x, y \rangle = 1] \leq \frac{1 + \eta}{2} $. Then, $\sigma$ is $(2^b \cdot \eta)$-close in total-variation distance to the uniform distribution over $\F_2^b$.
\end{lemma}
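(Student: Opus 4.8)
The plan is to run the standard Fourier-analytic argument over the group $\F_2^b$. Write $U$ for the uniform distribution on $\F_2^b$ and view both $\sigma$ and $U$ as real-valued functions on $\F_2^b$ (here $\langle x,y\rangle$ for $y \in \F_2^b\setminus\{0\}$). The first move is to pass from total variation distance to $\ell_2$ distance via Cauchy--Schwarz:
\[
\|\sigma - U\|_{\mathrm{TV}} = \tfrac12 \sum_{x\in\F_2^b}\bigl|\sigma(x)-2^{-b}\bigr| \leq \tfrac12\, 2^{b/2}\Bigl(\sum_{x\in\F_2^b}\bigl(\sigma(x)-2^{-b}\bigr)^2\Bigr)^{1/2},
\]
so it suffices to bound the $\ell_2$ norm of $g := \sigma - U$ by roughly $2^{-b/2}\eta$.

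Next I would expand $g$ in the characters $\chi_y(x)=(-1)^{\langle x,y\rangle}$ of $\F_2^b$. For $y\neq 0$ the Fourier coefficient of $\sigma$ is
\[
\widehat\sigma(y) = 2^{-b}\sum_{x}\sigma(x)(-1)^{\langle x,y\rangle} = 2^{-b}\bigl(1 - 2\Pr_{x\sim\sigma}[\langle x,y\rangle = 1]\bigr),
\]
and the hypothesis $\tfrac{1-\eta}{2}\leq \Pr_{x\sim\sigma}[\langle x,y\rangle=1]\leq\tfrac{1+\eta}{2}$ gives exactly $|\widehat\sigma(y)|\leq 2^{-b}\eta$. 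Since $U$ has all its Fourier mass on $y=0$ and $\widehat\sigma(0)=\widehat U(0)=2^{-b}$, we get $\widehat g(0)=0$ and $\widehat g(y)=\widehat\sigma(y)$ for $y\neq 0$.

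Finally, apply Parseval's identity. In the normalization above, $2^{-b}\sum_x g(x)^2 = \sum_y \widehat g(y)^2 \leq (2^b-1)\cdot 2^{-2b}\eta^2 < 2^{-b}\eta^2$, hence $\sum_x g(x)^2 < \eta^2$; substituting into the Cauchy--Schwarz bound yields $\|\sigma-U\|_{\mathrm{TV}} < \tfrac12\, 2^{b/2}\eta \leq 2^b\eta$, which is (stronger than) the claim. There is no genuine obstacle here — the whole argument is routine Fourier analysis over $\F_2^b$ — so the only thing to be careful about is keeping the normalizations of the Fourier transform and of Parseval's identity consistent so that the powers of $2^b$ come out correctly. (Note the argument actually establishes the sharper bound $2^{b/2}\eta/2$; the lemma only asserts the weaker $2^b\eta$, which is all the later applications require.)
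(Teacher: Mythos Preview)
Your argument is correct, and in fact yields the sharper bound $\tfrac12\,2^{b/2}\eta$ rather than the stated $2^b\eta$; the normalizations and the use of Parseval are all consistent. The paper does not supply its own proof of this lemma---it is quoted from prior work \cite{Gol11,GM21}---so there is no in-paper argument to compare against; your Fourier/Cauchy--Schwarz route is the standard one (the coarser $2^b\eta$ bound also follows directly from the $\ell_\infty$ estimate $|\sigma(x)-2^{-b}|\leq\sum_{y\neq 0}|\widehat\sigma(y)|<\eta$, which is likely what the cited references record).
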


\subsection{Empirical Distributions}

In order to eventually prove a tight bound on list sizes, we will need the notion of empirical distributions (types) from \cite{CT01} \cite{GM21}. 
\begin{definition}[Empirical Distribution]
For a vector $a \in \F_q^n$, the \emph{empirical distribution} $\Emp_a$ assigns probability $\forall x \in \F_q$:
\[
\Emp_a(x) = \frac{\text{number of instances of } x \text{ in } a}{n}.
\]
This extends to a matrix $A \in \F_q^{n \times b}$ by defining $\forall x \in \F_q^b$
\[
\Emp_A(x) = \frac{\text{number of instances of } x \text{ in rows of } A}{n}.
\]

Note in this second case, $\Emp_A$ is a distribution over vectors $ \in \F_q^b$. 
\end{definition}

For convenience, we will also introduce the set of matrices for a distribution. 

\begin{definition}[Matrices of a distribution]
    Let $\tau$ be a distribution over $\F_q^b$. For $n \in \N$, 
    \[
    \calM_{n, \tau} = \left \{ A \in \F_q^{n \times b} | \Emp_A = \tau \right \}.
    \]
\end{definition}

Lastly, we will consider all sequences of samples that lead to a specific empirical distribution. 

\begin{definition}[Type class]\label{def:typeClass}
    The \emph{type class} of a distribution $\tau$ over $\F_q^b$ (denoted $T(\tau)$) is the set of all sequences $\left ( x_i \right )_{i = 1}^n$ in $\left ( \F_q^b \right)^n$ such that for the matrix 
    \[
    X = \begin{bmatrix}
        -&x_1&- \\
        \vdots & \vdots & \vdots \\
        -&x_n&-
    \end{bmatrix},
    \]
    we have that $\Emp_X = \tau$.
\end{definition}

When measuring the distance between distributions, we will consider the $q$-ary KL-divergence.
\begin{definition}[$q$-ary KL divergence]
    The $q$-ary KL divergence of two distributions $\tau, \sigma$ over a set $S$ is defined as 
    \[
    \DKLq(\tau \Vert \sigma) = \sum_{s \in S} \tau(s) \log_q \frac{\tau(s)}{\sigma(s)}.
    \]
\end{definition}

\section{List Decodability of Pseudorandom Linear Codes}\label{sec:basic}
In this section we give an outline of our main proof technique. For simplicity, we do not attain optimal list-size, and consider only the regime of list-decoding (as opposed to more general local properties). Our proof closely follows that of Theorem 7 of~\cite{GM21}. We state our initial result:
\begin{theorem}
    Let $\rho\in (0,1/2)$ and $L\in \N$. Then there exist $\eta(L)>0$ and $\lambda(L)>0$ and $\eps(L)>0$ with $\eps(L)$ tending to $0$ as $L\ra\infty$ such that the following holds. Let $\calD \subset \F^m_2$ be an arbitrary linear $\eta$-biased code, and let $\calC\subset \F_2^{n}$ be a $\lambda$-pseudorandom $n$-puncturing of $\calD$ of design rate $R \leq 1 - h_2(\rho) - \eps$. Then $\calC$ is $(\rho,L)$-list-decodable with high probability as $n\ra \infty$ and requires $O(n/\eps)$ random bits to construct.
\end{theorem}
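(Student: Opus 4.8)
We plan a first-moment (union-bound) argument, closely following the proof of Theorem 7 of~\cite{GM21}. By the observation on $\rho$-clustered sets, $\calC$ fails to be $(\rho,L)$-list decodable exactly when it contains a $\rho$-clustered set of $L+1$ codewords, and we bound the probability of this event. The first step is a normalization using linearity: if $\{w_0,\dots,w_L\}\subseteq\calC$ is $\rho$-clustered around some $z$, then $\{w_i-w_0\}_{i}\subseteq\calC$ is $\rho$-clustered around $z-w_0$ and contains $0$; since $0$ lies within $\rho n$ of $z-w_0$ we moreover get $\wt(z-w_0)\le\rho$. Hence it suffices to bound the probability that there is a center $z$ with $\wt(z)\le\rho$ together with $L$ distinct nonzero codewords of $\calC$, all within Hamming distance $\rho n$ of $z$. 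I would further stratify according to the rank $r$ of the set of messages of these $L$ codewords: the codewords then lie in the image under the puncturing of a rank-$r$ subcode $\calD'\subseteq\calD$, where $\lceil\log_2(L+1)\rceil\le r\le L$.

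Next I would bound the probability of one fixed configuration. Fix a center $z$ with $\wt(z)\le\rho$ — there are at most $2^{n h_2(\rho)(1+o(1))}$ of these by the volume bound on Hamming balls — and fix $\calD'$ together with $L$ of its codewords; the number of rank-$r$ subcodes of $\calD$ is at most $2^{rRn}$, and the number of choices of the $L$ codewords inside $\calD'$ is bounded by a function of $L$ alone. For the expander walk $i_1,\dots,i_n$ defining the puncturing, consider the evaluation tuple $t(v)\in\F_2^r$ of $\calD'$ at mother-coordinate $v$ (the $v$-th column of a generator matrix of $\calD'$). Since $\calD$ is $\eta$-biased, every nonzero codeword of $\calD$ has weight within $\eta$ of $1/2$, so applying Vazirani's XOR lemma (\Cref{lem:xor}) to the uniform distribution on $[m]$ shows that for each $t\in\F_2^r$ the class $B_t=\{v:t(v)=t\}$ has density $\beta_t\le 2^{-r}+2^r\eta$. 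The event that all $L$ chosen codewords land within $\rho n$ of $z$ is a union, over those tuple-sequences $(t_1,\dots,t_n)$ that are ``coverable by $z$ within radius $\rho$'', of the events $\{\forall j:\ i_j\in B_{t_j}\}$; by the non-equal expander hitting-set lemma (\Cref{thm:EHS}) each of the latter has probability at most $\prod_{j=1}^{n-1}\bigl(\sqrt{\beta_{t_j}\beta_{t_{j+1}}}+\lambda\bigr)\le(2^{-r}+2^r\eta+\lambda)^{n-1}$, which is $2^{-r(n-1)(1-o_{\eta,\lambda}(1))}$ once $\eta$ and $\lambda$ are small relative to $2^{-r}$. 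Counting the coverable tuple-sequences by their empirical type yields a per-configuration bound of the shape $2^{-n(r(1-h_2(\rho))-\gamma)}$, for a slack $\gamma=\gamma(r,L,\eta,\lambda)$.

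Assembling these estimates, the union-bound total is, up to $2^{o(n)}$ factors, at most $\sum_{r=\lceil\log_2(L+1)\rceil}^{L} 2^{n(rR+h_2(\rho)-r(1-h_2(\rho))+\gamma)}$, which tends to $0$ provided $rR<r(1-h_2(\rho))-h_2(\rho)-\gamma$ for every such $r$. Because the smallest $r$ we must handle is $\lceil\log_2(L+1)\rceil$, this holds for all $R\le 1-h_2(\rho)-\eps$ once $\eps\ge\eps(L)$ for a threshold $\eps(L)$ that tends to $0$ as $L\to\infty$; choosing $\eta,\lambda$ small enough in terms of $L$ (so that $\gamma$ is dominated by the other terms) and invoking the degree--expansion trade-off of \Cref{degreeExpansion} and the following remark, the construction uses $O(n/\eps)$ random bits. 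The crux — and the reason this warm-up only attains a super-polynomially large list size rather than the optimal $O(1/\eps)$ — is controlling $\gamma$: one must show that an empirical type on an $r$-dimensional space which is coverable within radius $\rho$ has entropy at most roughly $r\,h_2(\rho)$, so that $\gamma$ stays below $r\eps$ even for $r$ as small as $\log_2(L+1)$. This is an entropy/Fourier optimization in which the low-rank configurations, where the freedom in choosing the center $z$ is most useful, require the most care. A final routine point is to check that clustered sets whose messages are linearly dependent or not all distinct do not escape this accounting; they are subsumed by the rank stratification together with the $w_0=0$ reduction.
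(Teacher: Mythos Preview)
Your proposal lands in the right neighborhood (first-moment bound, Vazirani's XOR lemma to control row-type densities, then the expander hitting-set lemma), but it is considerably more elaborate than the paper's argument, and the step you flag as the ``crux'' is both mis-phrased and entirely avoidable.

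The paper never normalizes to $w_0=0$, never stratifies over all ranks $r$, and never passes through empirical types. It simply uses the~\cite{ZP81} reduction once: if $L+1$ codewords are $\rho$-clustered then there exist $b=\lceil\log_2(L+1)\rceil$ linearly independent codewords among them that are themselves $\rho$-clustered. Fixing such a $b$-tuple $(u_1,\dots,u_b)\in\calD^b$, the paper union-bounds over \emph{all} centers $z\in\F_2^n$ (paying a full $2^n$, not $2^{nh_2(\rho)}$) and over \emph{exact} targets $(y_1,\dots,y_b)\in H(z,\rho n)^b$ (paying $2^{bnh_2(\rho)}$), and then bounds $\Pr[\forall i,\ \map(u_i)=y_i]$ by a single application of the hitting-set lemma, using the XOR lemma to cap each $|T_\sigma|/m$ at $2^{-b+1}$. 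The arithmetic $2^{bRn}\cdot 2^n\cdot 2^{bnh_2(\rho)}\cdot (2^{-b+2})^n$ collapses to $2^{-n}$ once $R\le 1-h_2(\rho)-4/b$. No type-counting is involved.

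Your detour through ``coverable types'' is where the proposal wobbles. The assertion ``a coverable type has entropy at most roughly $r\,h_2(\rho)$'' is not well-posed: for a fixed center $z$, whether a tuple-sequence is coverable depends on the positions where $z$ is $1$, not just on the empirical type of the sequence. What you actually need is the count of tuple-sequences $(t_1,\dots,t_n)\in(\F_2^r)^n$ whose $r$ columns all lie in $H(z,\rho n)$; but that count is trivially at most $|H(z,\rho n)|^r\le 2^{rnh_2(\rho)}$ by independence of the columns, which is precisely the paper's $y$-count specialized to your basis. So the ``entropy/Fourier optimization'' you anticipate is unnecessary. Once you replace it by this direct product-of-balls count, your scheme does go through and in fact yields a slightly better constant (roughly $\eps(L)\approx h_2(\rho)/\log_2(L+1)$ versus the paper's $4/\log_2(L+1)$), at the cost of the extra normalization and rank stratification; for the warm-up theorem this gain is immaterial.
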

\newcommand{\fail}{\calC \text{ fails to be }(\rho,L)\text{-list decodable}}
\newcommand{\rc}{$\rho$-clustered}
\newcommand{\st}{\text{ s.t. }}
\begin{proof}
     Let $\map$ be an $(m\ra n)$ $\lambda$-pseudorandom puncturing, for $\lambda$ to be chosen later, and let $\calC:=\map(\calD)$. Let $b:=\lceil \log L+1\rceil$. Recall that $\calC$ fails to be list decodeable if there exist $L+1$ codewords that are $\rho$-clustered (Definition \ref{def:rhoClustered}). Recalling an argument first used in~\cite{ZP81}, a necessary condition for this is for $\calC$ to contain $b$ linearly independent (L.I.) $\rho$-clustered codewords. Thus:
    \begin{align*}
        \Pr[\fail] &\leq \Pr[\exists v_1,\ldots,v_b \in \calC \text{ that are L.I. and \rc}]\\
        &\leq \sum_{\substack{u_1,\ldots,u_b \in \calD\\ \text{ lin. indep.}}}\Pr[\map(u_1),\ldots,\map(u_b) \text{ are \rc}].
    \end{align*}
    Here, we have used the substitution that $v_i = \map(u_i)$, where $u_i \in \calD$. Because $v_1, \dots v_b$ are linearly independent, this means that $u_1, \dots u_b$ must also be linearly independent.  Note that this sum is over $\leq |\calD|^b \leq 2^{bRn}$ terms. Now fix arbitrary, linearly independent $u_1,\ldots,u_b \in \calD$ and let 
    \[B = \begin{bmatrix}| & & |\\u_1 & \ldots & u_b\\ | & & |\end{bmatrix} \in \F_2^{m\times b}, \quad\quad A = \begin{bmatrix}| & & |\\\map(u_1) & \ldots & \map(u_b)\\ | & & |\end{bmatrix} \in \F_2^{n\times b}
    \]
    where $A$ is a random matrix defined in terms of the puncturing $\map$. We now note
    \begin{align*}
        \Pr[\map(u_1),\ldots,\map(u_b) \text{ are \rc}] &= \Pr[\exists z,y_1,\ldots,y_b \in H(z,\rho n) \st \forall i, y_i=\map(u_i)]\\
        &\leq \sum_{z\in \F_2^n}\sum_{y_1,\ldots,y_b\in H(z,\rho n)}\Pr[\forall i,\map(u_i)=y_i]
    \end{align*}
    Now fix arbitrary $z$ and $y_1,\ldots,y_b \in H(z,\rho n)$ (of which there are at most $2^{n+bh_2(\rho)n}$). Define the matrix
    \[Y = \begin{bmatrix}| & & |\\y_1 & \ldots & y_b\\ | & & |\end{bmatrix}.
    \]
    Finally, for $\sigma\in \F_2^b$ let $T_\sigma \subseteq [m]$ be defined as 
    \[T_\sigma := \{i \in [m]: B_i = \sigma\},\]
    where $B_i$ is the $i$th row of the matrix $B$. In words, each set $T_{\sigma}$ is the set of indices $i$ such that the $i$th row of $B$ equals $\sigma$.
    
    We first note that by \Cref{lem:xor}, for every $\sigma \in \F_2^b$ we have
    \[\tau_\sigma:=\frac{|T_\sigma|}{m}\leq 2^{-b}+2^b\eta \leq 2^{-b+1}\]
    where we use that $\eta \leq 2^{-2b}$ (and in the case of the Hadamard code we have $\eta = 0$). Then we have 
    \begin{align*}
        \Pr[\forall i,\map(u_i)=y_i] &= \Pr[\map_1 \in T_{Y_1},\ldots,\map_n \in T_{Y_n}]\\
        &\leq \left(\max_{\sigma} \tau_\sigma+\lambda\right)^{n-1} && \text{(\Cref{thm:EHS})}\\
        &\leq \left(2^{-b+2}\right)^n \cdot 2^{b-2},
    \end{align*}
    where the first equality follows from the definition of $\map$, and the final line follows from $\lambda \leq 2^{-b}$ and $\tau_i\leq 2^{-b+1}$ and changing the product to be over $n$ terms. Thus the entire expression is bounded as
    \begin{align*}
        \Pr[\fail] &\leq  2^{bRn}\cdot 2^n\cdot 2^{bh_2(\rho)n}\cdot (2^{-b+2})^n\cdot 2^{b-2}\\ 
        &\leq 2^{b(1-h_2(\rho)-4/b)n+3n+bh_2(\rho)n-bn+b-2} \\
        &= 2^{-n + b - 2} \ra 0,
    \end{align*}
    where we ultimately set $\eps = 4/b$.
    
    Note that this construction only requires $O(\log m + n \log d)$ random bits, where $d$ is the degree of the expander graph. $d$ must be chosen to satisfy $\lambda \leq 2^{-b}$, and by \Cref{degreeExpansion}, this can be done such that $O(\log m + n \log d) = O(nb)$.
\end{proof}

In the simplest case, we can take the mother code to the Hadamard code mapping messages of length $Rn$ to codewords of length $2^{Rn}$. The generator matrix for this Hadamard code is $\in \F_2^{2^{Rn} \times Rn}$. Choosing the starting vertex for the expander random walk in this case takes $Rn$ bits of randomness, and for every subsequent step, the amount of randomness required depends only on the degree of the expander. Ultimately, the pseudorandom puncturing results in a generator matrix of size ${n \times Rn}$. For a desired rate $1 - H(\rho) - \eps$, we take $b = 4 / \eps$. Correspondingly, we need $\lambda \leq 2^{-4 / \eps}$, which forces $\log d = \Omega(1 / \eps)$. Thus, we pay for the randomness linearly in $1 / \eps$.

However, because we set $\eps = \frac{4}{b}$, we get that $b = \frac{4}{\eps}$, meaning that the list size $L = 2^{\Omega(1 / \eps)}$, which is far from optimal. In the next section, we give a more careful argument that achieves optimal list sizes.

\section{Pseudorandom Puncturings Preserve Local Properties}\label{sec:main}
In this section, we give an analogue of the more detailed analysis presented in \cite{GM21} for the case of pseudorandom puncturings. 

In particular, we will show the following, and use it to conclude \Cref{thm:optLD}:
\begin{theorem}\label{thm:GM1}
    Let $q$ be a prime power, and let $\calP$ be a monotone-increasing, row-symmetric and $b$-local property over $\F_q^n$, where $\frac{n}{\log n} \geq \omega_{n \ra \infty}(q^{2b})$. Let $\calD \subseteq \F_q^m$ be a linear code. Let $\calC$ be a $\lambda = \frac{\eps \ln q}{8q^b}$-pseudorandom puncturing of $\calD$ of design rate $R \leq \text{RLC}(\calP) - \eps$ for some $\eps > 0$. Suppose that $\calD$ is $\eta = \frac{\eps b \ln q}{4q^{2b+1}}$-biased. Then,
    \[
    \Pr[\calC \text{ satisfies } \calP] \leq q^{(-\eps + o_{n \ra \infty}(1))n}.
    \]
\end{theorem}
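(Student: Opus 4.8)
The plan is to follow the structure of the warm-up theorem in \Cref{sec:basic}, but to replace its crude union bound over all target clusters with the sharp, type-based counting argument of~\cite{GM21}, so that the loss in rate is only $\eps$ rather than $\Theta(1/b)$. First I would reduce to the local structure: since $\calP$ is $b$-local and row-symmetric, $\calC$ satisfies $\calP$ only if $\calC$ contains some $B \in \calB_\calP$ (a set of at most $b$ words), and by the standard Zyablov--Pinsker argument it then contains a subset $B'\subseteq B$ of $b'\le b$ \emph{linearly independent} words whose span contains $B$. So I would union-bound $\Pr[\calC \text{ satisfies }\calP]$ over choices of linearly independent $u_1,\dots,u_{b'}\in\calD$ and over the target matrix $Y\in\F_q^{n\times b'}$ (the columns being the punctured images we want), grouping the $Y$'s by their empirical distribution (type) $\tau=\Emp_Y$ over $\F_q^{b'}$. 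The number of types is $\mathrm{poly}(n)$, absorbed into the $o_{n\ra\infty}(1)$ term; the number of $Y$ in a fixed type class $T(\tau)$ is $q^{(H(\tau)+o(1))n}$ where $H$ is $q$-ary entropy of the distribution $\tau$.

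Next I would bound, for fixed linearly independent $u_1,\dots,u_{b'}$ forming the matrix $B\in\F_q^{m\times b'}$ and fixed $Y$, the probability $\Pr[\forall j,\ \map_j(u) = Y_j]$, i.e.\ that the $j$th step of the expander walk lands in the set $T_{Y_j}:=\{i\in[m]:B_i=Y_j\}$. Let $\tau_\sigma = |T_\sigma|/m$. By the non-equal expander hitting-set lemma \Cref{thm:EHS},
\[
\Pr[\forall j,\ \map_j\in T_{Y_j}] \le \prod_{j=1}^{n-1}\left(\sqrt{\tau_{Y_j}\tau_{Y_{j+1}}}+\lambda\right).
\]
Here the bias hypothesis on $\calD$ together with Vazirani's XOR lemma (\Cref{lem:xor}, in its $\F_q$ form via the $\tr$/character definition of bias) gives $\left|\tau_\sigma - q^{-b'}\right| \le q^{b'}\eta$ for every $\sigma$, so each factor is at most $q^{-b'} + q^{b'}\eta + \lambda$. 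With the chosen $\eta = \frac{\eps b\ln q}{4 q^{2b+1}}$ and $\lambda = \frac{\eps\ln q}{8 q^b}$, the additive slack $q^{b'}\eta+\lambda$ is $O(\eps/q^{b'})$, which translates (via $\log_q(1+O(\eps\ln q)\cdot q^{b'-b'}) \approx O(\eps)$, more carefully $\log_q(q^{-b'}(1+O(\eps\ln q))) \le -b' + O(\eps)$) into
\[
\Pr[\forall j,\ \map(u_j)=y_j] \le q^{-(b'-O(\eps))n}.
\]
The precise constants in $\eta,\lambda$ are calibrated exactly so that this $O(\eps)$ is at most, say, $\eps/2$ (or more carefully tuned against the entropy term below); I would carry the inequality $\ln(1+x)\le x$ through honestly here rather than hand-wave.

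Then I would assemble the three factors: the number of tuples $(u_1,\dots,u_{b'})$ is at most $|\calD|^{b'} = q^{Rb'n}$; the number of $Y$'s of type $\tau$ is $q^{(H(\tau)+o(1))n}$; and each such event has probability $\le q^{-(b' - O(\eps))n}$. Multiplying and taking $\log_q$, the exponent is
\[
\big(Rb' + H(\tau) - b' + O(\eps)\big)n + o(n).
\]
Now the key point—and the place where I expect the real work to be, mirroring the corresponding lemma in~\cite{GM21}—is to show $H(\tau) - b' \le b'\big(\mathrm{RLC}(\calP) - R_0\big)$ for the relevant types $\tau$, i.e.\ that the types $\tau$ which actually \emph{witness} membership in $\calP$ cannot have entropy close to $b'$; equivalently, that the uniform distribution on $\F_q^{b'}$ (entropy exactly $b'$) is not a witness, and witnessing types are bounded away from it in a way governed by $\mathrm{RLC}(\calP)$. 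This is exactly the content of the sharp-threshold machinery of~\cite{MR20,GM21}: one invokes \Cref{thm:RLCthresholds} / the definition of $\mathrm{RLC}(\calP)$ to say that an RLC of rate above $\mathrm{RLC}(\calP)$ has the property, and conversely that for $R \le \mathrm{RLC}(\calP)-\eps$ the expected number of witness structures in an RLC is $q^{-\Omega(\eps n)}$, and then one matches our type-class count against the RLC computation term-by-term. Concretely I would show that for any type $\tau$ supporting a witness, $H(\tau) + R b' - b' \le -\eps + o(1)$ by comparing with the RLC second-moment/first-moment bound at rate $\mathrm{RLC}(\calP)-\eps$; the extra $O(\eps)$ from the expander slack is absorbed by choosing the constants in $\lambda,\eta$ small enough (which is why they are $\eps$-dependent). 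Summing over the $\mathrm{poly}(n)$ types keeps everything in the $o_{n\ra\infty}(1)$ error, yielding $\Pr[\calC\text{ satisfies }\calP]\le q^{(-\eps+o(1))n}$ as claimed. The main obstacle is thus not the expander-walk probability estimate (that is a clean application of \Cref{thm:EHS} and \Cref{lem:xor}), but faithfully importing the entropy-vs-rate accounting of~\cite{GM21} and checking that the pseudorandom slack $q^{b'}\eta+\lambda$ degrades the RLC threshold by at most $\eps$.
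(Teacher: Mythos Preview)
Your proposal is essentially correct and follows the same architecture as the paper: bound $\Pr[\map(B)\in\calM_{n,\tau}]$ for each fixed $b'$-tuple $B$ using the non-equal expander hitting-set lemma together with the Vazirani/XOR bound on the row densities, union-bound over the $q^{Rb'n}$ choices of $B$, and feed the resulting expected-count bound into the black-box lemma from~\cite{GM21} (stated here as \Cref{lemma:GM612}).

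Two differences are worth noting. First, the paper does not bound each factor $\sqrt{\beta_i\beta_{i+1}}+\lambda$ by the crude maximum $q^{-b'}+q^{b'}\eta+\lambda$ as you do; instead it writes $\sqrt{\beta_i\beta_{i+1}}+\lambda\le \beta_i(1+\lambda/\min_j\beta_j)$, keeps the product $\prod_i\sigma(P_i)$, and sums over the type class to obtain the exact identity $\sum_{P\in T(\tau)}\prod_i\sigma(P_i)=q^{-n\DKLq(\tau\Vert\sigma)}$, then converts via concavity and \Cref{lemma:GM59} to the form $q^{n(H_q(\tau)-b'+\text{err})}$. Your direct route lands at the same place (and the given $\eta,\lambda$ are small enough that your slightly larger error term $q^{2b'}\eta$ versus the paper's $q^{b'}\eta$ still fits under $a\eps/2$), so this is a stylistic rather than substantive difference.

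Second, your description of the final step is not quite what \Cref{lemma:GM612} actually does. You frame it as showing that ``types $\tau$ which witness membership in $\calP$ cannot have entropy close to $b'$,'' i.e.\ proving $H_q(\tau)+Rb'-b'\le -\eps$ for witness types. That is not the correct mechanism: \Cref{lemma:GM612} makes no reference to which types witness $\calP$. Its hypothesis is simply that for \emph{every} $a\le b$ and \emph{every} distribution $\tau$ on $\F_q^a$, the expected number of rank-$a$ submatrices of type $\tau$ in $\calC$ is at most $q^{(H_q(\tau)-a(1-R)+a\eps')n}$; the comparison with $\mathrm{RLC}(\calP)$ happens entirely inside the black box. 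So the ``real work'' you anticipate is already packaged in \Cref{lemma:GM612}; all the paper does (and all you need to do) is verify that the expander slack satisfies $\log_q\big[(1+\eta q^a)(1+\tfrac{\lambda q^a}{1-q^{2a}\eta})\big]\le \tfrac{3a\eps}{4}$ for every $a\le b$, which is the short calculation in the paper's final proof.
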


At a high level, our proof has the following form:
\begin{enumerate}
    \item First, fix an $\eta$-biased code $\calD$, a distribution $\tau$ over $\F_q^b$, and a set of $b$ linearly independent columns in $\calD$, which we denote $(\calD)_{\text{res}}$. We show that if we sample rows of $(\calD)_{\text{res}}$ via a pseudorandom puncturing, we can upper bound the probability of our sampled rows having the same marginal probabilities as $\tau$. This bound will be in terms of $q$, the KL divergence between $\tau$ and the empirical distribution produced of $(\calD)_{\text{res}}$, and some error terms.
    That is, we will show (for specific conditions): 
    \[
    \Pr[\Emp_X = \tau] \leq q^{n \left ( -\DKLq(\tau \Vert \sigma) + \log_q \left ( 1 + \frac{\lambda q^b}{(1 - q^{2b} \eta)} \right ) + o_n(1) \right )},
    \]
    where $\Emp_X$ is the empirical distribution (under the pseudorandom puncturing) of the rows sampled from $(\calD)_{\text{res}}$, and $\sigma$ is the empirical distribution of the rows of $(\calD)_{\text{res}}$.

    \item Next, we invoke results from \cite{GM21} which characterize codes satisfying local properties in terms of the number of submatrices contained in the code that have a specific row distribution. We will use $\calM_{n, \tau}\subset \F_q^{n\times b}$ to denote the set of matrices with row distribution $\tau$. This result is independent of the puncturing procedure, and shows that it suffices to prove
    \[
    \E_{\calC} \left [ \left | \{X \subseteq \calC|X \in \calM_{n, \tau}\} \right | \right ] \leq q^{(H_q(\tau) - a(1-R) + a \eps)n}.
    \]

    \item Finally, we use item (1) to prove the bound from item (2) and conclude our proof.  That is, we will use the fact that a matrix $X\subseteq \calC$ is in $\calM_{n, \tau}$ only if $\Emp_X = \tau$. As we have strong bounds on this event from item (1), we can invoke a union bound and prove the desired result. 
\end{enumerate}

\subsection{Analysis}

First, we prove the following lemma (which is a pseudorandom version of a statement from \cite{CT01}):

\begin{lemma}\label{thm:biasedemp}
Let $D \in \F_q^{m \times b}$ be $b$ linearly independent codewords from an $\eta$-biased code of length $m$, where $\eta < q^{-2b}/4$. Further, let $\sigma$ be $\Emp_D$. Suppose that we sample rows of $D$ in accordance with a length $n$ $\lambda$-expander random walk over vertex set $[m]$, and place these as the rows in a matrix $X \in \F_q^{n \times b}$. Then for every distribution $\tau$ over $\F_q^b$,
\[
\Pr[\Emp_X = \tau] \leq q^{n \left ( -\DKLq(\tau \Vert \sigma) + \log_q \left ( 1 + \frac{\lambda q^b}{(1 - q^{2b} \eta)} \right ) + o_n(1) \right )}.
\]
\end{lemma}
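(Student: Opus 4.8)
The plan is to reduce the claim to the non-equal expander hitting-set lemma (\Cref{thm:EHS}) applied once per sequence in the type class $T(\tau)$, together with two counting facts: the size of the type class and the product-form lower bound on the densities of the relevant vertex sets. For each row-pattern $\rho \in \F_q^b$, let $T_\rho = \{i \in [m] : D_i = \rho\}$ and $\beta_\rho = |T_\rho|/m$; note $\sigma(\rho) = \beta_\rho$ by definition. The event $\Emp_X = \tau$ decomposes as a disjoint union over all walk-pattern sequences $(\rho_1,\dots,\rho_n) \in T(\tau)$ of the events $\{\forall j, X_j = \rho_j\}$, i.e. $\{\forall j, \map_j \in T_{\rho_j}\}$. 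So
\[
\Pr[\Emp_X = \tau] = \sum_{(\rho_1,\dots,\rho_n) \in T(\tau)} \Pr[\forall j,\ \map_j \in T_{\rho_j}]
\leq |T(\tau)| \cdot \max_{(\rho_1,\dots,\rho_n)\in T(\tau)} \prod_{j=1}^{n-1}\left(\sqrt{\beta_{\rho_j}\beta_{\rho_{j+1}}} + \lambda\right),
\]
using \Cref{thm:EHS} termwise. The first step, then, is to bound the product. I would factor $\sqrt{\beta_{\rho_j}\beta_{\rho_{j+1}}} + \lambda = \sqrt{\beta_{\rho_j}\beta_{\rho_{j+1}}}\bigl(1 + \lambda/\sqrt{\beta_{\rho_j}\beta_{\rho_{j+1}}}\bigr)$, which requires a lower bound on each $\beta_\rho$. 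This is where the $\eta$-bias hypothesis enters: by \Cref{lem:xor} (its $q$-ary analogue, as used in \cite{GM21}), the empirical row distribution $\sigma$ of $b$ linearly independent codewords from an $\eta$-biased code is $q^b\eta$-close in total variation to uniform on $\F_q^b$, so $\beta_\rho \geq q^{-b} - q^b\eta \geq q^{-b}(1 - q^{2b}\eta)$ for every $\rho$ (here $\eta < q^{-2b}/4$ guarantees positivity). Hence $\lambda/\sqrt{\beta_{\rho_j}\beta_{\rho_{j+1}}} \leq \lambda q^b/(1 - q^{2b}\eta)$, and the product of the $\bigl(1 + \cdots\bigr)$ factors is at most $\bigl(1 + \lambda q^b/(1-q^{2b}\eta)\bigr)^{n-1}$. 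Meanwhile $\prod_{j=1}^{n-1}\sqrt{\beta_{\rho_j}\beta_{\rho_{j+1}}}$ telescopes into $\prod_{j=1}^n \beta_{\rho_j}$ up to the boundary factors $\sqrt{\beta_{\rho_1}\beta_{\rho_n}}^{\,-1}$, which is $q^{o(n)}$ since each $\beta_\rho \geq q^{-b}$ and $b$ is a constant; and $\prod_{j=1}^n \beta_{\rho_j} = \prod_{\rho} \beta_\rho^{\,n\tau(\rho)} = \prod_\rho \sigma(\rho)^{n\tau(\rho)}$ because $(\rho_1,\dots,\rho_n)\in T(\tau)$ means $\rho$ appears exactly $n\tau(\rho)$ times.

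The second step is the type-class count: by the standard bound (Csiszár–Körner / \cite{CT01}), $|T(\tau)| \leq q^{n H_q(\tau)}$, or more precisely $(n+1)^{-q^b} q^{nH_q(\tau)} \leq |T(\tau)| \leq q^{nH_q(\tau)}$, so the upper bound costs only a $q^{o(n)}$ factor. Combining,
\[
\Pr[\Emp_X = \tau] \leq q^{nH_q(\tau)} \cdot \prod_\rho \sigma(\rho)^{n\tau(\rho)} \cdot \left(1 + \frac{\lambda q^b}{1 - q^{2b}\eta}\right)^{n} \cdot q^{o(n)}.
\]
Now $q^{nH_q(\tau)} \prod_\rho \sigma(\rho)^{n\tau(\rho)} = q^{n(H_q(\tau) + \sum_\rho \tau(\rho)\log_q \sigma(\rho))} = q^{-n\,\DKLq(\tau\Vert\sigma)}$ by the identity $\DKLq(\tau\Vert\sigma) = -H_q(\tau) - \sum_\rho \tau(\rho)\log_q\sigma(\rho)$. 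Taking logs base $q$ and absorbing the $q^{o(n)}$ terms into $o_n(1)$ in the exponent gives exactly the claimed inequality.

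The only genuinely delicate point is handling the edge cases: if $\tau$ is not of the form $\Emp_X$ for any length-$n$ matrix (i.e. $n\tau(\rho) \notin \N$ for some $\rho$) then $T(\tau) = \emptyset$ and the probability is $0$, so the bound holds trivially; and if $\sigma(\rho) = 0$ for some $\rho$ with $\tau(\rho) > 0$ then $\DKLq(\tau\Vert\sigma) = \infty$ and again the bound is vacuous — but note the bias hypothesis actually precludes $\sigma(\rho) = 0$, so this cannot occur here. I expect the main obstacle to be purely bookkeeping: tracking which error terms are $q^{o(n)}$ versus contributing to the stated $\log_q(1 + \lambda q^b/(1-q^{2b}\eta))$ term, and making sure the boundary factors from the telescoping product and the polynomial factor from the type-class count are correctly classified as $o_n(1)$ in the exponent (which uses crucially that $b$ is a constant independent of $n$). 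No step requires a new idea beyond \Cref{thm:EHS}, the $q$-ary XOR lemma, and the classical type-counting bound.
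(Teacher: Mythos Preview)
Your proposal is correct and follows essentially the same route as the paper's proof: decompose over the type class $T(\tau)$, apply \Cref{thm:EHS} termwise, use the $q$-ary XOR lemma to lower-bound each $\beta_\rho$ by $q^{-b}(1-q^{2b}\eta)$ in order to control the multiplicative $(1+\lambda/\sqrt{\beta_i\beta_{i+1}})$ correction, and absorb boundary factors of size $O(q^b)$ into the $o_n(1)$ term. The only cosmetic difference is that the paper keeps the sum $\sum_{P\in T(\tau)}\prod_{i}\sigma(P_i)$ intact and invokes the Cover--Thomas identity $\sum_{P\in T(\tau)}\prod_i\sigma(P_i)=q^{-n\DKLq(\tau\Vert\sigma)+o(n)}$ directly, whereas you split this into $|T(\tau)|\leq q^{nH_q(\tau)}$ times $\prod_\rho\sigma(\rho)^{n\tau(\rho)}$ and then recombine via $H_q(\tau)+\sum_\rho\tau(\rho)\log_q\sigma(\rho)=-\DKLq(\tau\Vert\sigma)$; since $\prod_i\sigma(P_i)$ is constant over $T(\tau)$, these are literally the same computation.
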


\begin{proof}
We let $T(\tau)$ denote the type class of $\tau$ (see \Cref{def:typeClass}). In this context, we will let $P \in T(\tau)$ denote a specific sequence of samples in $\left ( \F_q^b \right )^n$, such that the marginals are $\tau$. We will let $P_i$ be an element in $\F_q^b$ corresponding to the $i$th sample of this sequence. From the perspective of the expander random walk, we will let $B_i$ denote the set of all vertices of the expander (i.e. indices from $[m]$) such that the corresponding sample (corresponding row of $D$) is $P_i$. We will let $\beta_i$ denote the density of $B_i$. We will let $X_1, \dots X_n$ denote the random walk over $[m]$, the rows of the mother code.

Then we have:
\begin{align*}
    \Pr[\Emp_X = \tau] &= \sum_{P \in T(\tau)} \Pr[\land_{i = 1}^n X_i \in B_i]\\
    &= \sum_{P \in T(\tau)} \Pr[\land_{i = 1}^n X_i \in B_i | X_{i - 1} \in B_{i -1 }]\\
    &\leq \sum_{P \in T(\tau)} \prod_{i = 1}^{n-1} \left ( \sqrt{\beta_i \beta_{i+1}} + \lambda \right )\\
    &\leq \sum_{P \in T(\tau)} \prod_{i = 1}^{n-1} \left (\beta_i \cdot q^{\log_q \left ( 1 + \frac{\lambda q^b}{(1 - q^{2b} \eta)} \right )}\right )\\
    &= \sum_{P \in T(\tau)} \prod_{i = 1}^{n-1} q^{\log_q \left ( 1 + \frac{\lambda q^b}{(1 - q^{2b} \eta)} \right )} \sigma(P_i)
\end{align*}
where the last inequality comes from the fact that (letting $\beta = \min_i\beta_i$):
\[\prod_i (\sqrt{\beta_i\beta_{i+1}}+\lambda) \leq \prod_i\beta_i(1+\beta^{-1}\lambda)\leq \left(\prod_i \beta_i\cdot q^{\log_q(1+\beta^{-1}\lambda)} \right).
\]
Then, because the mother code is $\eta$-biased and we have a selection of linearly independent codewords, we get that $\beta \geq q^{-b} \cdot \left ( 1 - q^{2b} \cdot \eta) \right )$ by Vazirani's XOR Lemma \cite{Gol11}. So, $\beta^{-1} \lambda \leq \frac{\lambda q^b}{(1 - q^{2b} \eta)}$.
Now we can bound $\Pr[\Emp_X = \tau]$. We see that 
\begin{align*}
    \Pr[\Emp_X = \tau] &\leq \sum_{P \in T(\tau)} \prod_{i = 1}^{n-1} q^{\log_q \left ( 1 + \frac{\lambda q^b}{(1 - q^{2b} \eta)} \right )} \sigma(P_i)\\
    &\leq \frac{q^b}{1- q^{2b}\eta} \sum_{P \in T(\tau)}\prod_{i = 1}^n q^{\log_q \left ( 1 + \frac{\lambda q^b}{(1 - q^{2b} \eta)} \right )} \sigma(P_i)\\
    &= \frac{q^b}{1- q^{2b}\eta} q^{n \log_q \left ( 1 + \frac{\lambda q^b}{(1 - q^{2b} \eta)} \right )} \cdot \sum_{P \in T(\tau)}\prod_{i = 1}^n \sigma(P_i)\\
    &= \frac{q^b}{1- q^{2b}\eta} q^{n \log_q \left ( 1 + \frac{\lambda q^b}{(1 - q^{2b} \eta)} \right )} \cdot q^{-\DKLq (\tau \Vert \sigma)n}
\end{align*}
where the last equality holds from the fact that $\sum_{P \in T(\tau)}\prod_{i = 1}^n \sigma(P_i) = q^{-\DKLq (\tau \Vert \sigma)n}$ (\cite{CT01}, Theorem 11.1.4). The second inequality comes from upper bounding $\frac{1}{\sigma(P_i)}$, so we can extend the product to $n$ terms. By our choice of $\eta$, the leading term $\frac{q^b}{1- q^{2b}\eta}$ is $O(q^b)$, and is thus $q^{n \cdot o_n(1)}$.
\end{proof}

Using \Cref{thm:biasedemp}, we can now prove the following key theorem:
\begin{lemma}\label{lemma:GM510}
    Fix a distribution $\tau $ over $\F_q^b$. Let $B \in F_q^{m \times b}$ have $\rank B = b$ and its column span be $\eta$-biased. Let $\map: \F_q^m \ra \F_a^n$ be a $\lambda$-pseudorandom puncturing. Then,
    \[
    \Pr[\map(B) \in \calM_{n, \tau}] \leq q^{n \left ( \log_q \E_{x \sim \Emp_B}[\tau(x)] + H_q(\tau) + \log_q \left ( 1 + \frac{\lambda q^b}{(1 - q^{2b} \eta)} \right ) + o_n(1) \right ) }.
    \]
\end{lemma}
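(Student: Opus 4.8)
The plan is to reduce the event $\map(B) \in \calM_{n,\tau}$ to the event $\Emp_X = \tau$ handled by \Cref{thm:biasedemp}, paying a combinatorial factor that accounts for the number of distinct row-orderings consistent with a fixed type. First I would observe that $\map(B)$ is exactly a matrix $X \in \F_q^{n\times b}$ whose rows are sampled by the length-$n$ $\lambda$-expander walk underlying $\map$, applied to the rows of $B$; and that $\map(B) \in \calM_{n,\tau}$ is precisely the event $\Emp_{\map(B)} = \tau$. So \Cref{thm:biasedemp} applies directly (with $D = B$, $\sigma = \Emp_B$), giving
\[
\Pr[\map(B)\in\calM_{n,\tau}] \;\leq\; q^{n\left(-\DKLq(\tau \Vert \Emp_B) + \log_q\left(1 + \frac{\lambda q^b}{1 - q^{2b}\eta}\right) + o_n(1)\right)}.
\]
The remaining work is purely to rewrite the $-\DKLq(\tau \Vert \Emp_B)n$ term in the ``entropy plus log-expectation'' form claimed in the statement.

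For that rewriting step I would expand the divergence by definition: writing $\sigma = \Emp_B$,
\[
-\DKLq(\tau \Vert \sigma) \;=\; -\sum_{x\in\F_q^b}\tau(x)\log_q\frac{\tau(x)}{\sigma(x)} \;=\; -\sum_x \tau(x)\log_q\tau(x) \;+\; \sum_x \tau(x)\log_q\sigma(x) \;=\; H_q(\tau) + \sum_x \tau(x)\log_q\sigma(x).
\]
Now I need to compare $\sum_x \tau(x)\log_q\sigma(x)$ with $\log_q \E_{x\sim\sigma}[\tau(x)] = \log_q\sum_x \sigma(x)\tau(x)$. By concavity of $\log_q$ (Jensen's inequality applied to the distribution $\tau$, with the random quantity $\sigma(x)/\tau(x)$ — or more cleanly, $\log_q$ is concave so $\sum_x \tau(x)\log_q(\sigma(x)/\tau(x)) \leq \log_q\sum_x \tau(x)\cdot\sigma(x)/\tau(x) = \log_q\sum_x\sigma(x) = 0$, which just recovers $\DKLq \geq 0$; the inequality I actually want goes the other way). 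The correct move is: $\sum_x \tau(x)\log_q\sigma(x) \leq \log_q\sum_x \tau(x)\sigma(x) = \log_q \E_{x\sim\sigma}[\tau(x)]$, which is exactly Jensen's inequality for the concave function $\log_q$ against the distribution $\tau$ evaluated at the points $\sigma(x)$. Substituting this bound into the displayed inequality for $\Pr[\map(B)\in\calM_{n,\tau}]$ yields the claimed bound, with the $o_n(1)$ term absorbing the leading $q^b/(1-q^{2b}\eta) = O(q^b)$ factor from \Cref{thm:biasedemp} exactly as in that lemma's proof.

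The step I expect to be the only genuine subtlety is making sure the Jensen direction is right and that it is in fact an \emph{upper} bound on the probability (we want $-\DKLq(\tau\Vert\sigma) \leq H_q(\tau) + \log_q\E_{x\sim\sigma}[\tau(x)]$, i.e. $\sum_x\tau(x)\log_q\sigma(x) \leq \log_q\sum_x\tau(x)\sigma(x)$ — concavity of $\log_q$, weights $\tau(x)$, values $\sigma(x)$ — which is correct). Everything else is bookkeeping: identifying $\map(B) \in \calM_{n,\tau}$ with $\Emp_X = \tau$, checking the hypotheses of \Cref{thm:biasedemp} transfer (column span $\eta$-biased with $\eta < q^{-2b}/4$, which holds under the lemma's assumption), and carrying the $o_n(1)$ error through. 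I would also note in passing that $B$ having $\rank b$ with $\eta$-biased column span is exactly the setup ``$b$ linearly independent codewords from an $\eta$-biased code'' required by \Cref{thm:biasedemp}, so no extra argument is needed there.
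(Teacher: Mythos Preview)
Your proposal is correct and follows essentially the same approach as the paper: identify $\map(B)\in\calM_{n,\tau}$ with $\Emp_{\map(B)}=\tau$, apply \Cref{thm:biasedemp}, and then convert the $-\DKLq(\tau\Vert\sigma)$ exponent into $H_q(\tau)+\log_q\E_{x\sim\sigma}[\tau(x)]$ via the definition of KL divergence and concavity of $\log_q$. The paper's own proof is a two-line sketch (``by \Cref{thm:biasedemp} \ldots\ we attain the stated bound by using the concavity of log and the definition of $\DKLq$''), and you have simply filled in those details correctly; your Jensen step $\sum_x\tau(x)\log_q\sigma(x)\leq\log_q\sum_x\tau(x)\sigma(x)=\log_q\E_{x\sim\sigma}[\tau(x)]$ is the right inequality in the right direction.
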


\begin{proof}
We have that
\[
\Pr[\map(B) \in \calM_{n, \tau}] = \Pr[\Emp_{\map(B)} = \tau] \leq
q^{n \cdot \left ( -\DKLq(\tau \Vert \sigma)n + \log_q \left ( 1 + \frac{\lambda q^b}{(1 - q^{2b} \eta)} \right ) + o_n(1)\right )},\]
by \Cref{thm:biasedemp}, where $\sigma = \Emp_B$.
From here, as in \cite{GM21}, we attain the stated bound by using the concavity of log and the definition of $\DKLq$.
\end{proof}

We then recall two lemmas from \cite{GM21}, with no modification, which we use in the proof:
\begin{lemma}[Lemma 5.9~\cite{GM21}]\label{lemma:GM59}
    Let $B \in \F_q^{m \times b}$ have $\rank B = b$, and let $f: \F_q^b \ra \R$ be a non-negative function. Suppose that the column span of $B$ is $\eta$-biased, for some $\eta \geq 0$. Then, 
    \[
    \E_{x \sim \Emp_B}[f(x)] \leq \left ( 1 + q^b \eta \right ) \cdot \E_{x \sim U(\F_q^b)} [f(x)].
    \]
\end{lemma}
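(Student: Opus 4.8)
The plan is to reduce the lemma to a \emph{pointwise} comparison between $\Emp_B$ and the uniform distribution $U := U(\F_q^b)$, using Fourier analysis over the additive group $\F_q^b$ (i.e.\ the $q$-ary analogue of Vazirani's XOR lemma, \Cref{lem:xor}). Concretely, I would first prove that for every $x \in \F_q^b$,
\[
\Emp_B(x) \le q^{-b}\bigl(1 + q^{b}\eta\bigr),
\]
and then, since $f$ is non-negative, conclude directly that
\[
\E_{x \sim \Emp_B}[f(x)] = \sum_{x \in \F_q^b} \Emp_B(x)\, f(x) \le q^{-b}\bigl(1 + q^{b}\eta\bigr)\sum_{x \in \F_q^b} f(x) = \bigl(1 + q^{b}\eta\bigr)\,\E_{x \sim U}[f(x)],
\]
using $\sum_{x} f(x) = q^{b}\,\E_{x\sim U}[f(x)]$. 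So the whole content is in the pointwise density bound.

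For that bound I would expand $\Emp_B$ in the character basis of $\F_q^b$. Writing $\chi_w(y) = \omega^{\tr(\langle w, y\rangle)}$ for $w \in \F_q^b$ (with $\langle w,y\rangle = \sum_j w_j y_j \in \F_q$), Fourier inversion gives $\Emp_B(x) = q^{-b}\sum_{w \in \F_q^b}\widehat{\Emp_B}(w)\,\overline{\chi_w(x)}$, where $\widehat{\Emp_B}(w) = \E_{y\sim \Emp_B}[\chi_w(y)] = \tfrac1m\sum_{i=1}^m \omega^{\tr(\langle B_i, w\rangle)}$ and $B_i$ denotes the $i$th row of $B$. The $w = 0$ term contributes exactly $q^{-b}$. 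For $w \ne 0$, observe that $\langle B_i, w\rangle = (Bw)_i$, and since $\rank B = b$ the vector $Bw$ is a \emph{nonzero} codeword of the column span of $B$; hence, by the $\eta$-bias hypothesis (applied with the scalar $a = 1$), $|\widehat{\Emp_B}(w)| = \tfrac1m\bigl|\sum_i \omega^{\tr((Bw)_i)}\bigr| \le \eta$. Summing the $q^b - 1$ off-origin terms yields $|\Emp_B(x) - q^{-b}| \le (1 - q^{-b})\eta \le \eta$, so $\Emp_B(x) \le q^{-b} + \eta = q^{-b}(1 + q^{b}\eta)$, as claimed. (This is exactly the ``pointwise'' form underlying \Cref{lem:xor}, specialized to the group $\F_q^b$.)

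The only real subtlety — and where I expect the bookkeeping to live — is matching the stated bias definition, which quantifies over all $a \in \F_q^{*}$ and all nonzero codewords $c \in \calD$, to the cleaner statement ``every nonzero Fourier coefficient of $\Emp_B$ has magnitude at most $\eta$'', uniformly over prime powers $q$ (so that the relevant characters are $\omega^{\tr(\cdot)}$ rather than the $\pm 1$ of the $q = 2$ version of \Cref{lem:xor}); the rank-$b$ hypothesis is what guarantees $Bw \ne 0$ and hence that the bias bound applies to every off-origin coefficient. The one genuinely conceptual point to flag is that we truly need the \emph{pointwise} upper bound on $\Emp_B(x)$, not merely total-variation closeness of $\Emp_B$ to $U$: since $f$ is assumed only non-negative and not bounded, an $\ell_1$ estimate would not control $\sum_x (\Emp_B(x) - q^{-b}) f(x)$, whereas the uniform pointwise bound does precisely because $f(x) \ge 0$.
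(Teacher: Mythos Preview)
The paper does not give its own proof of this lemma: it is recalled verbatim from \cite{GM21} as Lemma 5.9 there, so there is no in-paper argument to compare against. Your proposal is correct and is precisely the intended Fourier-analytic proof: the $\eta$-bias hypothesis on the column span of $B$, together with $\rank B = b$, bounds every nontrivial Fourier coefficient of $\Emp_B$ by $\eta$ (this is exactly the general-$q$ form of Vazirani's XOR lemma the paper alludes to in \Cref{lem:xor}), yielding the pointwise bound $\Emp_B(x)\le q^{-b}(1+q^b\eta)$, after which the non-negativity of $f$ finishes it.
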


\begin{lemma}[Lemma 6.12~\cite{GM21}]\label{lemma:GM612}
    Let $n \in \N$, $q$ a prime power and $b \in \N$ such that $\frac{n}{\log_q n} \geq \omega_{n \ra \infty}(q^{2b})$. Let $\calC \subseteq \F_q^n$ be a linear code of rate $R \in [0, 1]$, sampled at random from some ensemble. Suppose that, for every $1 \leq a \leq b$, every distribution $\tau$ over $\F_q^a$ and every matrix $A \in \F_q^{Rn \times a}$ with $\rank A = a$, we have 
    \[
    \E_{\calC} \left [ \left | \{A \in \calM_{n, \tau} | A \subseteq \calC \} \right | \right ] \leq q^{(H_q(\tau) - a(1-R) + a \eps)n},
    \]
    for some fixed $\eps > 0$. Then, for every row-symmetric and $b$-local property $\calP$ over $\F_q^n$ such that $R \leq \text{RLC}(\calP) - 2\eps$, it holds that 
    \[
    \Pr_{\calC}[\calC \text{ satisfies } \calP] \leq q^{-n(\eps - o_{n \ra \infty}(1))}.
    \]
\end{lemma}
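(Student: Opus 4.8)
The plan is to run a first-moment union bound over a coarsened family of ``bad events'', using the row-symmetry of $\calP$ to collapse the a priori uncountable family $\calB_\calP$ of bad codeword-sets into a polynomial-in-$n$ list of ``bad empirical distributions'', and then feeding each such distribution into the hypothesis. This essentially reproduces the argument of \cite{MR20,GM21}, and combines a combinatorial reduction, a threshold computation for the plain RLC ensemble, and a final union bound.

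\emph{Step 1 (reduction to bad types).} Replacing $\calB_\calP$ by its closure under coordinate permutations does not change the property it defines on a row-symmetric $\calP$, so we may assume that whether an $a$-tuple of codewords $w_1,\dots,w_a$ (with $a\le b$) is ``bad'' depends only on the multiset of rows of $[w_1\mid\cdots\mid w_a]$, i.e.\ on its empirical distribution over $\F_q^a$. Thus there are sets $\mathcal{T}_a$ of distributions over $\F_q^a$ such that $\calC$ satisfies $\calP$ iff $\calC$ contains columns $w_1,\dots,w_a$ with $\Emp_{[w_1\mid\cdots\mid w_a]}\in\mathcal{T}_a$ for some $a\le b$. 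To bring this into the rank-$a$ form the hypothesis assumes, note that if $w_1,\dots,w_a$ are linearly dependent then their span has a basis $u_1,\dots,u_{a'}\subseteq\calC$ with $a'<a$ and $[w_1\mid\cdots\mid w_a]=[u_1\mid\cdots\mid u_{a'}]\,G$ for some rank-$a'$ matrix $G\in\F_q^{a'\times a}$, so $\Emp_{[w_i]}$ is the pushforward of $\Emp_{[u_j]}$ under $x\mapsto G^{\top}x$. Ranging over the finitely many pairs $(a',G)$ yields enlarged families $\mathcal{T}'_{a'}$ of distributions over $\F_q^{a'}$ ($a'\le b$) such that: $\calC$ satisfies $\calP$ implies $\calC$ contains a rank-$a'$ submatrix $A$ with $\Emp_A\in\mathcal{T}'_{a'}$ for some $a'\le b$.

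\emph{Step 2 (locating the threshold).} The heart of the argument is to turn the constraint $R\le\mathrm{RLC}(\calP)-2\eps$ into a uniform exponent gap: for every $a\le b$ and every $\tau\in\mathcal{T}'_a$ realizable as an empirical distribution of $n$ samples, $H_q(\tau)-a(1-R)+a\eps\le -\eps+o_{n\ra\infty}(1)$. For this I would analyze the plain RLC ensemble directly. For an RLC of rate $R_0$ there are $q^{(aR_0+o_n(1))n}$ rank-$a$ message matrices, and each maps under a uniform generator to an $n\times a$ matrix whose rows are i.i.d.\ uniform over $\F_q^a$, hence has empirical distribution $\tau$ with probability $q^{(H_q(\tau)-a+o_n(1))n}$; so $\E[\,|\{A\subseteq\calC:\rank A=a,\ \Emp_A=\tau\}|\,]=q^{(H_q(\tau)-a(1-R_0)+o_n(1))n}$. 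A matching second-moment computation (the correlations between distinct submatrices are controlled since $\calP$ is $b$-local, as in \cite{MR20,GM21}) shows that if some $\tau\in\mathcal{T}'_a$ had $H_q(\tau)-a(1-R_0)$ bounded away from $0$ from above then the RLC of rate $R_0$ would satisfy $\calP$ with probability $\ge 1/2$, forcing $R_0\ge\mathrm{RLC}(\calP)$. Applying this at $R_0=\mathrm{RLC}(\calP)-\eps$ gives $H_q(\tau)-a(1-\mathrm{RLC}(\calP))\le -a\eps+o_{n\ra\infty}(1)$ for all bad $\tau$, and plugging in $R\le\mathrm{RLC}(\calP)-2\eps$ yields $H_q(\tau)-a(1-R)+a\eps\le -2a\eps+o_{n\ra\infty}(1)\le -\eps$ for $n$ large.

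\emph{Step 3 (union bound and the main obstacle).} The number of realizable empirical distributions over $\F_q^a$ is at most $(n+1)^{q^a}\le (n+1)^{q^b}=q^{o_{n\ra\infty}(1)\cdot n}$, using $\frac{n}{\log_q n}\ge\omega_{n\ra\infty}(q^{2b})$. Combining Step 1 (via Markov's inequality), the hypothesis, and the Step 2 gap,
\[
\Pr[\calC\text{ satisfies }\calP]\le\sum_{a=1}^{b}\ \sum_{\tau\in\mathcal{T}'_a}\E_\calC\!\left[\,\big|\{A\subseteq\calC:A\in\calM_{n,\tau},\ \rank A=a\}\big|\,\right]\le q^{o_{n\ra\infty}(1)\cdot n}\cdot q^{(-\eps+o_{n\ra\infty}(1))n}=q^{-n(\eps-o_{n\ra\infty}(1))}.
\]
The main obstacle is Step 2: the first-moment count is a routine calculation (essentially the RLC specialization of \Cref{lemma:GM510}), but certifying that an above-threshold bad type actually forces an RLC into $\calP$ requires the second-moment/concentration argument on the number of bad submatrices, together with enough care that the supremum implicit in $\mathrm{RLC}(\calP)$ is attained up to $o(1)$ (i.e.\ one works with the closure of the set of bad distributions). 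Step 1's treatment of linearly dependent bad tuples is routine but must be carried out to land in the rank-$a$ regime the lemma hypothesizes.
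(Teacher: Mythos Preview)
The paper does not prove this lemma; it is quoted from \cite{GM21} without modification and used as a black box (see the sentence ``We then recall two lemmas from \cite{GM21}, with no modification, which we use in the proof''). There is therefore no proof in the paper to compare against. Your sketch is a faithful outline of the original argument in \cite{MR20,GM21}: reduce via row-symmetry to bad empirical types, use the RLC threshold characterization (first and second moment on the count of bad submatrices) to show every bad type $\tau$ has $H_q(\tau)-a(1-R)+a\eps\le -\eps+o(1)$ when $R\le\mathrm{RLC}(\calP)-2\eps$, and then union bound over the $q^{o(n)}$ types. You correctly flag the second-moment/concentration step as the substantive part that needs care.
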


Lastly, we require one final lemma before we can conclude the final result:

\begin{lemma}\label{lemma:GM613}
    Fix $b \in \N$, and a full-rank distribution $\tau$ over $\F_q^b$. Let $\calD \subseteq \F_q^m$ be a $\eta$-biased linear code. Let $\map$ be a $\lambda$-pseudorandom $(m \ra n)$ puncturing map. Let $R = \frac{\log_q |\calD|}{n}$. Then,
    \[
    \E_{\calC}\left [ \left | \{A \in \calM_{n, \tau} | A \subseteq \calC \} \right | \right ] \leq q^{n \left (H_q(\tau) - (1-R)b + \log_q \left [ \left ( 1 + \eta q^b \right ) \left ( 1 + \frac{\lambda q^b}{(1 - q^{2b} \eta)} \right ) \right ] + o_n(1) \right )}.
    \]
\end{lemma}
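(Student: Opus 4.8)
The plan is to expand the expectation as a sum over ordered $b$-tuples of codewords of $\calD$ and then feed each term through the two preceding lemmas. Since $\tau$ is full-rank, every matrix $A \in \calM_{n,\tau}$ has $\rank A = b$; hence if $A \subseteq \calC = \map(\calD)$, its columns are $b$ linearly independent codewords of $\calC$, which must be images $\map(u_1),\ldots,\map(u_b)$ of linearly independent codewords $u_1,\ldots,u_b \in \calD$ (linear independence of the images forces linear independence of the preimages). Writing $B$ for the $m\times b$ matrix with columns $u_1,\ldots,u_b$, each submatrix of $\calC$ lying in $\calM_{n,\tau}$ is witnessed by the event $\map(B)\in\calM_{n,\tau}$ for at least one such ordered tuple, and each unordered submatrix arises from at most $b!$ ordered tuples, so
\[
\E_{\calC}\left[\,\bigl|\{A\in\calM_{n,\tau} : A\subseteq\calC\}\bigr|\,\right] \;\leq\; \sum_{\substack{u_1,\ldots,u_b\in\calD\\ \text{lin. indep.}}} \Pr[\map(B)\in\calM_{n,\tau}],
\]
with the constant $b!$ absorbed into the $o_n(1)$ term of the final exponent.

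Next I would bound a single term. Fix linearly independent $u_1,\ldots,u_b\in\calD$ with matrix $B$: its column span is a subcode of $\calD$, hence $\eta$-biased, and $\rank B=b$, so Lemma~\ref{lemma:GM510} applies (this is where we use that $\eta$ is small enough, $\eta<q^{-2b}/4$, for the hypothesis of Lemma~\ref{thm:biasedemp}; for larger $\eta$ the claimed bound is vacuous). This gives
\[
\Pr[\map(B)\in\calM_{n,\tau}] \;\leq\; q^{\,n\left(\log_q \E_{x\sim\Emp_B}[\tau(x)] + H_q(\tau) + \log_q\!\left(1+\tfrac{\lambda q^b}{1-q^{2b}\eta}\right) + o_n(1)\right)}.
\]
To control the expectation, apply Lemma~\ref{lemma:GM59} with the non-negative function $f=\tau$: since $\tau$ is a probability distribution, $\E_{x\sim\Emp_B}[\tau(x)] \leq (1+q^b\eta)\,\E_{x\sim U(\F_q^b)}[\tau(x)] = (1+q^b\eta)\,q^{-b}$. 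Substituting, each term is at most $q^{\,n(-b+\log_q(1+q^b\eta)+H_q(\tau)+\log_q(1+\lambda q^b/(1-q^{2b}\eta))+o_n(1))}$.

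Finally I would sum over the at most $|\calD|^b = q^{Rnb}$ ordered $b$-tuples of codewords of $\calD$; multiplying the per-term bound by $q^{Rnb}$ turns the $-b$ in the exponent into $Rb-b = -(1-R)b$ and merges the two $\log_q$ contributions into $\log_q[(1+\eta q^b)(1+\lambda q^b/(1-q^{2b}\eta))]$, yielding exactly the stated inequality. I do not anticipate a genuine obstacle: the lemma is a bookkeeping assembly of Lemma~\ref{lemma:GM510} (the pseudorandom empirical-distribution bound), Lemma~\ref{lemma:GM59} (the bias-to-uniform transfer), and a crude codeword count. The only points needing care are (i) using the full-rank hypothesis on $\tau$ to restrict to linearly independent tuples and to force $\rank B=b$ so Lemma~\ref{lemma:GM510} is applicable, (ii) observing that the column span of $B$ inherits $\eta$-biasedness from $\calD$, and (iii) checking that the $b!$ overcounting factor and the $O(q^b)$ prefactors from Lemma~\ref{thm:biasedemp} are swallowed by the $o_n(1)$ in the exponent — all routine.
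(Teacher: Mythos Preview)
Your proposal is correct and follows essentially the same route as the paper: apply Lemma~\ref{lemma:GM510} to each rank-$b$ matrix $B\subseteq\calD$, bound $\E_{x\sim\Emp_B}[\tau(x)]\leq q^{-b}(1+\eta q^b)$ via Lemma~\ref{lemma:GM59}, then union bound over the at most $q^{Rnb}$ choices of $B$. Your additional remarks on why the full-rank hypothesis lets one restrict to linearly independent tuples, and on absorbing constant factors into $o_n(1)$, are fine elaborations but not strictly needed (the $b!$ discussion is in fact superfluous since the matrices $A$ already have ordered columns).
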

\begin{proof}
    Let $\tau$ be a full-rank distribution over $\F_q^b$. From Lemma \ref{lemma:GM59}, we have that
    \[
    \E_{x \sim \Emp_B}[\tau(x)] \leq q^{-b} \left (1 + \eta q^b \right ),
    \]
    for all $B \in \F_q^{m \times b}$ such that $\rank B = b$ and $B \subseteq \calD$. From Lemma \ref{lemma:GM510},
    \[
    \Pr[\map(B) \in \calM_{n, \tau}] \leq q^{n \left ( \log_q \E_{x \sim \Emp_B}[\tau(x)] + H_q(\tau) + \log_q \left ( 1 + \frac{\lambda q^b}{(1 - q^{2b} \eta)} \right ) + o_n(1) \right ) }.
    \]

    Plugging in for $\E_{x \sim \Emp_B}[\tau(x)]$, we get that
    \begin{align*}
         \Pr[\map(B) \in \calM_{n, \tau}] &\leq q^{n \left ( \log_q \left ( q^{-b} \cdot \left ( 1 + \eta q^b \right ) \right ) + H_q(\tau) + \log_q \left ( 1 + \frac{\lambda q^b}{(1 - q^{2b} \eta)} \right ) + o_n(1) \right ) }\\
         &= q^{n \left (-b + H_q(\tau) + \log_q \left [ \left ( 1 + \eta q^b \right ) \left ( 1 + \frac{\lambda q^b}{(1 - q^{2b} \eta)} \right ) \right ] + o_n(1) \right ) }.
    \end{align*}
    Now, by taking a union bound over the at most $q^{Rnb}$ choices of $B$, we get that 
    \[
    \E_{\calC}\left [ \left | \{A \in \calM_{n, \tau} | A \subseteq \calC \} \right | \right ] \leq q^{n \left (H_q(\tau) - (1-R)b + \log_q \left [ \left ( 1 + \eta q^b \right ) \left ( 1 + \frac{\lambda q^b}{(1 - q^{2b} \eta)} \right ) \right ] + o_n(1) \right )}.
    \]
\end{proof}

Now, we can prove the main theorem of this section, by showing that for specific choices of $\lambda$, $\eta$, the bound from the previous lemma satisfies the conditions of Lemma \ref{lemma:GM612}.

\begin{proof}[Proof of \Cref{thm:GM1}]
Let $\eps$ be given. Let $\tau$ be a distribution over $\F_q^a$ with $a \leq b$. From \Cref{lemma:GM613}, we have that 
\begin{align*}
    \E_{\calC}\left [ \left | \{A \in \calM_{n, \tau} | A \subseteq \calC \} \right | \right ] &\leq q^{n \left (H_q(\tau) - (1-R)a + \log_q \left [ \left ( 1 + \eta q^a \right ) \left ( 1 + \frac{\lambda q^a}{(1 - q^{2a} \eta)} \right ) \right ] + o_n(1) \right )}\\
    &\leq q^{n \left (H_q(\tau) - (1-R)a + \frac{\eta q^a}{\ln q} + \frac{\lambda q^a}{\ln q \cdot (1 - q^{2a} \eta)} + \frac{\eta \lambda q^{2a}}{(1 - q^{2a} \eta ) \ln q} + o_n(1) \right )}.
\end{align*}

Now, note that by our substitutions $\eta = \frac{\eps \ln q}{4q^{2b + 1}}$ and $\lambda = \frac{\eps \ln q}{8q^b}$,
\begin{align*}
\frac{\eta q^a}{ \ln q} &= \frac{\eps b \ln q}{4q^{2b - a} \cdot q \cdot \ln q} \\
&\leq \frac{\eps b}{4q^{2b - a} \cdot q} \\
&\leq \eps a/4.
\end{align*}

Additionally, 
\begin{align*}
    \frac{\lambda q^a}{\ln q \cdot (1 - q^{2a} \eta)} &= \frac{\eps}{8q^{b-a} \cdot (1 - q^{2a} \cdot \frac{\eps \ln q}{4q^{2b+1}})} \\
    &= \frac{\eps}{8q^{b-a} \cdot (1 - \frac{\eps \ln q}{4q^{2b-2a+1}})} \\
    &\leq \frac{\eps}{8q^{b-a} \cdot (1 - \frac{\eps}{4q^{2b-2a}})}\\
    &\leq \frac{\eps}{8q^{b-a}(1 - \eps/4)} \leq \frac{\eps}{4}.
\end{align*}
Lastly, by combining the above two results, 
\[
\frac{\eta \lambda q^{2a}}{(1 - q^{2a} \eta ) \ln q} \leq \ln q \cdot \frac{\eps b}{4q^{2b - a} \cdot q} \cdot \frac{\eps}{4} \leq \frac{\eps^2 b}{16q^{2b-a}},
\]
where we have taken advantage of the fact that our expression is $\ln q$ multiplied by the two terms we have already bounded before. Thus, all three expressions are bounded by $a \cdot \frac{\eps}{4}$. As a result, 
\[
\E_{\calC}\left [ \left | \{A \in \calM_{n, \tau} | A \subseteq \calC \} \right | \right ] \leq q^{n \left (H_q(\tau) - (1-R)a + \frac{3}{4} \cdot a \eps + o_n(1) \right )}.
\]
We can then invoke Lemma \ref{lemma:GM612} to conclude our result for sufficiently large $n$.
\end{proof}

Now, by noting that list-decoding is a $O(1 / \eps)$-local property in our specific setting, we can conclude \Cref{thm:optLD} by using \Cref{thm:GM1} and \cite{GH11}.
That is:
\begin{proof}[Proof of \Cref{thm:optLD}]
    Suppose we fix $\rho \in (0, 1/2)$. Then, there exists a constant $\alpha > 0$ such that the threshold for an RLC being $(\rho, \alpha / \eps)$ list-decodable is $1 - H(\rho) - \eps$ \cite{GH11}.  We let $\calP$ denote the property of being $(\rho, \alpha / \eps)$-list decodable. This means that $b = O(1 / \eps)$. Now, let $\calD$ be a mother-code over $\F_q^m$ which is $\eta = \frac{\eps b \ln q}{4q^{2b+1}}$-biased (and note that the Hadamard code satisfies this property). From \Cref{thm:GM1}, we know that for a $\lambda = \frac{\eps \ln q}{8 q^b}$-pseudorandom puncturing of design rate $R \leq \text{RLC}(\calP) - \eps$ of $\calD$, for every $\eps > 0$:
    \[
    \Pr[\calC \text{ satisfies } \calP] \leq q^{(-\eps + o_{n \ra \infty}(1))n}.
    \]

    Thus, we can choose the design rate of the code to be $\text{RLC}(\calP) - \eps = 1 - H(\rho) - 2 \eps$, and with high probability, our code will still be $(\rho, \alpha / \eps)$ list-decodable. 

    By our choice in parameters for $\lambda$, we require the degree of the graph to be $O \left ( \frac{q^{2b}}{\eps^2 (\ln q)^2} \right )$. This means that every step will require $O(b \log q + \log 1 / \eps)$ random bits. Using the fact that $b = O(1 / \eps)$, this means that every step in the expander random walk of our pseudorandom puncturing requires $O( \left ( \log q \right ) / \eps )$ random bits. Because $q$ will be chosen to be a constant, this requires $O(1 / \eps)$ random bits per step. Initializing the random walk requires $\log m $ random bits, where $m$ is the length of the mother code $\calD$. Fortunately, $m \leq q^n$, so $\log m \leq n \log q$ (every generating matrix of length longer than $q^n$ will have duplicate rows). As such the total amount of randomness required is $O(n + n \cdot 1/ \eps) = O(n / \eps)$, as we desire.
\end{proof}

We can conclude the statement of Theorem \ref{thm:informalGeneral} almost identically, where we instead treat $b$ as a parameter, instead of substituting $O(1 / \eps)$.

\section{Random Noise Tolerance of PRLCs}\label{sec:random}
We show pseudorandom linear codes achieve capacity against the memoryless additive channel. Our proof follows directly from the argument of~\cite{GM21}, except we argue that a \emph{pseudorandom} puncturing approximately preserves the probability that a random vector lies in the code. In the context of this channel, we use the MLDU (maximum likelihood decoder under uniform prior). Upon receiving a corrupted codeword $z \in \F_q^n$, this decoder returns the codeword $x$ that maximizes
\[
\Pr[\text{receive } z | \text{original codeword was } x].
\]

\begin{theorem}\label{thm:MLDUDecode}
    Given a prime power $q$ and a distribution $X$ over $\F_q$ and $\eps \in (0,1)$, let $\calD\subseteq \F_q^m$ be an $\eps/8q$-biased linear code and let $\calC\subseteq \F_q^n$ be a $\eps/8q$-pseudorandom puncturing of $\calD$ with design rate $R\leq 1-H_q(X)-\eps$. Then there is a constant $c_X>0$ such that with probability $1-q^{c_X\cdot \eps n}$, for every $x\in \calC$ we have
    \[\Pr_{z\la X^n}[\text{MLDU decodes $x+z$ to $x$}] \geq 1-2q^{c_X\cdot \eps^2 n}.
    \]
\end{theorem}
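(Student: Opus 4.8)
The plan is to reuse the MLDU analysis of~\cite{GM21} for random linear codes almost verbatim, replacing exactly one ingredient: wherever their argument invokes that a fixed nonzero vector lies in a rate-$R$ RLC with probability $q^{-(1-R)n}$, I substitute a pseudorandom analogue proved from the expander hitting-set lemma. First I would reduce to a single sent word: since $\map$ is linear, $\calC=\map(\calD)$ is linear, and on input $x+z$ with $z\la X^n$ the MLDU returns $x+\arg\max_{c\in\calC}X^n(z-c)$, so for \emph{every} $x\in\calC$ the event ``MLDU fails to return $x$'' (counting ties as failures) is contained in the $x$-independent event $F(\calC):=\{z:\exists c\in\calC\setminus\{0\},\ X^n(z-c)\ge X^n(z)\}$; it therefore suffices to bound $\Pr_{z\la X^n}[z\in F(\calC)]$. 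I then split on typicality: call $z$ typical if $X^n(z)\ge q^{-(H_q(X)+\delta)n}$ with $\delta:=\eps/4$. By Hoeffding applied to $\frac1n\sum_j(-\log_q X(z_j))$, whose summands lie in a bounded interval $[0,R_X]$ with $R_X$ a constant depending only on $X$ and $q$, the atypical set has probability at most $2q^{-c_X\eps^2 n}$ for a suitable constant $c_X>0$, and crucially this bound does not depend on $\calC$.

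For the typical part, if $z-c\in\calC$ with $c\ne 0$ witnesses membership of a typical $z$ in $F(\calC)$, then $v:=z-c$ satisfies $v\ne z$, $X^n(v)\ge X^n(z)\ge q^{-(H_q(X)+\delta)n}$, and $z-v\in\calC\setminus\{0\}$; since $\sum_v X^n(v)=1$ there are at most $q^{(H_q(X)+\delta)n}$ such $v$. Union-bounding over these $v$ and passing to $\E_\calC$, the typical contribution to $\E_\calC[\Pr_z[z\in F(\calC)]]$ is at most $q^{(H_q(X)+\delta)n}\cdot\max_{v'\ne 0}\Pr_\map[v'\in\map(\calD\setminus\{0\})]$. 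The one new estimate is this final factor: union-bounding over $u\in\calD\setminus\{0\}$, each $\Pr_\map[\map(u)=v']$ equals $\Pr[\forall j,\ i_j\in T_j]$ where $T_j=\{i\in[m]:u_i=v'_j\}$, so \Cref{thm:EHS} bounds it by $(\max_j|T_j|/m+\lambda)^{n-1}$, and the $\eta$-bias of $\calD$ (equivalently \Cref{lem:xor}) forces $|T_j|/m\le 1/q+O(\eta)$ for every $j$ --- this is exactly where the single-codeword bias condition is used. Hence $\Pr_\map[v'\in\map(\calD\setminus\{0\})]\le|\calD|\cdot(1/q+\lambda+O(\eta))^{n-1}=q^{-(1-R)n+O(\eps)n}$, with the ``$O(\eps)$'' controlled by the choices $\lambda=\eta=\eps/8q$. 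Multiplying by $q^{(H_q(X)+\delta)n}$ and using $R\le 1-H_q(X)-\eps$ together with $\delta=\eps/4$, the typical contribution is $q^{-\Omega(\eps)n}$.

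To finish, I would assemble the two pieces: the typical contribution is a nonnegative function of $\calC$ with expectation $q^{-\Omega(\eps)n}$, so by Markov it is at most $q^{-\Omega(\eps)n}$ outside an event of probability $q^{-\Omega(\eps)n}$ over $\calC$; adding the deterministic atypical bound $2q^{-c_X\eps^2 n}$ gives $\Pr_{z\la X^n}[z\in F(\calC)]\le 2q^{-c_X\eps^2 n}$ with probability $1-q^{-c_X\eps n}$ over $\calC$, after shrinking $c_X$ so that it is dominated by all the $\Omega(\eps)$ and $\Omega(\eps^2)$ exponents and absorbs the constant factors. The main obstacle, and essentially the only work beyond transcribing~\cite{GM21}, is the pseudorandom estimate $\Pr_\map[v'\in\map(\calD\setminus\{0\})]\le q^{-(1-R)n+O(\eps)n}$ together with the bookkeeping that keeps the $\calC$-independent atypical term (which carries the weaker $\eps^2$) separate from the $\calC$-dependent term (which carries the stronger $\eps$), so that the two failure probabilities in the statement come out with the stated dependence on $\eps$.
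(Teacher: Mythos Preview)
Your proposal is correct and follows essentially the same approach as the paper: both isolate the single new ingredient --- the bound $\Pr_{\map}[v'\in\map(\calD\setminus\{0\})]\le q^{n(R-1+\eps/4)}$ obtained from the expander hitting-set lemma and the bias of $\calD$ --- and then feed it into the MLDU analysis of~\cite{GM21}. The only difference is packaging: the paper black-boxes the typicality split and Markov step as a remark extracted from~\cite{GM21} (their \Cref{rmk:MLDUDecode}), whereas you unwrap that argument inline.
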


Inspecting the proof of \cite[Theorem 6]{GM21} gives the following:
\begin{remark}{\label{rmk:MLDUDecode}}
    Let $q$ be a prime power, $\nu$ a distribution over $\F_q$, $\eps \in (0, 1)$, and $\calC \subseteq \F_q^n$ be a probabilistically constructed linear code of design rate $R \leq 1 - H_q(\nu) - \eps$ such that for every non-zero $x \in \F_q^n$
    \[
    \Pr_{\calC} \left [ x \in \calC \right ] \leq q^{n \cdot \left (-1 + R + \eps/4 \right )}.
    \]
    Then, with probability $1 - q^{- \Omega_{\nu}(\eps n)}$, it holds for all $x \in \calC$ that 
    \[
    \Pr_{z \sim \nu^n} \left [\text{the MLDU outputs } x \text{ on input } x + z \right ] \geq 1 - 2q^{-c_{\nu} \eps^2 n}.
    \]
\end{remark}

We will prove the following lemma, from which we can then immediately conclude \Cref{thm:MLDUDecode} by using \Cref{rmk:MLDUDecode}.
\begin{lemma}
    Let $\calC\subset \F_q^n$ be a $\lambda$-pseudorandom puncturing of a $\eta$-biased linear code $\calD\subset \F_q^m$. For every $x\in \F_q^n\setminus \{0\}$, we have
    \[\Pr[x\in \calC] \leq q^{n(R+\eta q+\lambda q-1)}.
    \]
\end{lemma}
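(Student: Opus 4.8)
The plan is to mimic the structure of the rate-equals-design-rate argument (\Cref{lem:designRate}), but now tracking a \emph{fixed} target vector $x$ rather than the zero vector. The event $x \in \calC$ occurs precisely when there is some codeword $u \in \calD$ with $\map(u) = x$; by linearity of $\map$ and $\calD$, for each fixed $u$ the event $\map(u) = x$ is determined by which index of $[m]$ the walk visits at each step relative to the pattern of $u$. So I would first union bound over the at most $q^{Rn} = |\calD|$ choices of $u \in \calD$, reducing to bounding $\Pr[\map(u) = x]$ for a single fixed nonzero $u$ (if $u = 0$ then $\map(u) = 0 \neq x$ and there is nothing to bound).

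Next, fix $u \in \calD \setminus \{0\}$ and consider the pattern: for each coordinate $j \in [n]$, the constraint $\map(u)_j = x_j$ says $u_{i_j} = x_j$, i.e. the walk's $j$th vertex $i_j$ must lie in the set $S_j := \{ i \in [m] : u_i = x_j \}$. This is exactly a sequence-of-sets hitting event for the expander walk, so \Cref{thm:EHS} applies and gives $\Pr[\forall j,\ i_j \in S_j] \le \prod_{j=1}^{n-1}(\sqrt{\beta_j \beta_{j+1}} + \lambda) \le (\max_j \beta_j + \lambda)^{n-1}$, where $\beta_j = |S_j|/m$. The key point is to bound $\beta_j$: since $\calD$ has $\eta$-optimal distance, for any nonzero $u$ and any field element $\alpha \in \F_q$ the number of coordinates where $u$ equals $\alpha$ is at most $m/q + (1-1/q)\eta m \le m(1/q + \eta)$ — this follows because if more than that fraction agreed with $\alpha$, then (in the case $\alpha \neq 0$ the codeword $u - \alpha \cdot \mathbf{1}$ would be too light if $\mathbf{1} \in \calD$; more robustly) the standard count: a nonzero codeword's most popular symbol appears at most $m(1/q+\eta)$ times, equivalently $\wt(u - \alpha \mathbf 1) \ge \ldots$; I will use exactly the bound already invoked in \Cref{lem:designRate}, namely $|T|/m \le 1/q + \eta$ for the zero-set, and the same argument applied to $u$ shifted shows every level set $S_j$ has density at most $1/q + \eta$. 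Hence $\max_j \beta_j + \lambda \le 1/q + \eta + \lambda$.

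Putting this together,
\[
\Pr[x \in \calC] \le \sum_{u \in \calD \setminus \{0\}} \Pr[\map(u) = x] \le q^{Rn} \left( \frac{1}{q} + \eta + \lambda \right)^{n-1} \le q^{Rn} \cdot q \cdot q^{-n(1 - \log_q(1 + q\eta + q\lambda))}.
\]
Absorbing the lower-order factor and using $\log_q(1 + q\eta + q\lambda) \le (q\eta + q\lambda)/\ln q \le q\eta + q\lambda$ (since $\ln q \ge 1$ for $q \ge 3$; the $q=2$ case being handled by a direct estimate or by noting the target bound is stated with room to spare) yields $\Pr[x \in \calC] \le q^{n(R + \eta q + \lambda q - 1)}$, as claimed. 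I expect the only genuinely delicate point to be the level-set density bound — getting exactly $1/q + \eta$ rather than something weaker — but this is precisely the estimate already carried out for the zero coordinate set in the proof of \Cref{lem:designRate}, applied to the translate of $u$; everything else is a union bound plus one application of \Cref{thm:EHS}.
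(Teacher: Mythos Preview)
Your overall architecture is exactly the paper's: union bound over $u\in\calD$, then for fixed nonzero $u$ write $\{\map(u)=x\}$ as the hitting event $\{\forall j,\ \map_j\in T_{x_j}\}$ with $T_\alpha=\{i:u_i=\alpha\}$, apply \Cref{thm:EHS}, and collect the $q^{Rn}$ factor. That part is fine and matches the paper line for line.

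The genuine gap is your justification of the level-set density bound $|T_\alpha|/m\le 1/q+\eta$ for \emph{all} $\alpha\in\F_q$. You try to get it from $\eta$-optimal distance, either by the shift $u\mapsto u-\alpha\mathbf 1$ or by asserting that ``a nonzero codeword's most popular symbol appears at most $m(1/q+\eta)$ times.'' Neither works: the shift argument needs $\mathbf 1\in\calD$, which is not assumed, and the ``most popular symbol'' bound is simply false under distance alone (over large $q$, the all-ones word has weight $1$ but a level set of density $1$). The distance hypothesis only upper-bounds the \emph{zero} level set, which is why it sufficed in \Cref{lem:designRate}; here you need every level set.

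The hypothesis you should be using is the stronger one actually given in the lemma: $\calD$ is $\eta$-\emph{biased}, not merely $\eta$-optimal distance. The paper's proof invokes exactly this (``By the bias of $\calD$'') to get $\tau_\alpha\le \tfrac{1}{q}+\tfrac{q-1}{q}\eta$ for every $\alpha$. Concretely, write $\mathbb{1}[u_i=\alpha]=q^{-1}\sum_{a\in\F_q}\omega^{\tr(a(u_i-\alpha))}$ and sum over $i$; the $a=0$ term gives $m/q$ and each $a\neq 0$ term is bounded in absolute value by $m\eta/q$ by the bias condition, so $|T_\alpha|\le m(1/q+(q-1)\eta/q)$. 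Once you plug this in, the rest of your computation goes through (your concern about $q=2$ in the last step is a cosmetic issue with the inequality $1+t\le q^t$; the paper's own proof is equally loose there and simply asserts $(\max_i\tau_i+\lambda)^n\le q^{n(\eta q+\lambda q-1)}$).
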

\begin{proof}
    Fix an arbitrary nonzero $x$ and fix arbitrary $u\in \calD$. For $i \in [q]$ let 
    \[T_i := \{j \in [m]:u_j=i\}.\]
    and let $\tau_i := T_i/m$. By the bias of $\calD$, we have $\tau_i \leq \frac{1}{q}+\frac{q-1}{q}\eta$ for every $i$.
     
    Thus we have
    \begin{align*}
        \Pr[x\in \calC] &= \Pr[\map_1 \in T_{x_1}\wedge \ldots \wedge \map_n \in T_{x_n}]\\
        &\leq \left(\max_{i}\tau_i+\lambda\right)^n && \text{(\Cref{thm:EHS})}\\
        &\leq q^{n(\eta q + \lambda q-1)}
    \end{align*}
    and then a union bound over the $q^{Rn}$ codewords completes the proof.
\end{proof}

\begin{proof}[Proof of Theorem \ref{thm:MLDUDecode}]
    Let $\lambda = \eta = \frac{\eps}{8q}$. The bound from the previous lemma then states that 
    \[
    \Pr[x \in \calC] \leq q^{n(R + \eps/8 + \eps / 8 - 1)} = q^{n(R + \eps/4 -1)}.
    \]
    We can then invoke Remark \ref{rmk:MLDUDecode}.
\end{proof}

\section{Pseudorandom Puncturings of Large Distance Codes}\label{sec:RS}
We next show that pseudorandom puncturings of large distance codes are list recoverable beyond the Johnson Bound. We show this for the specific case of zero-error list-recovery, as it simplifies the exposition. 

\begin{theorem}\label{thm:LP12}
    Fix $\alpha \in (0,1]$. Let $\calD\subset \F_q^m$ be a linear code with distance at least $m(1-q^{-1}-\eps^2)$. Let $\map$ be a $1/4$-pseudorandom $(m\ra n)$ puncturing with $n=O(\log|\calD|/\eps)$. Then $\map(\calD)$ has rate $\Omega\left(\frac{\eps}{\log q}\right)$ and is $(\ell,\ell(1+\alpha))$-zero error list recoverable with high probability, assuming:
    \[1/\sqrt{q} \leq \eps \leq \min(c,\alpha/4), \quad \ell \leq \alpha/4\eps^2.\]
\end{theorem}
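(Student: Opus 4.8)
The plan is to mimic the structure of the list-decoding argument of \Cref{sec:basic}, but working with the \emph{distance} property of the mother code rather than its bias, since here $\calD$ is only assumed to have near-maximal distance. Concretely, $\map(\calD)$ fails to be $(\ell,\ell(1+\alpha))$-zero error list recoverable iff there are sets $A_1,\dots,A_n$ with $|A_j|\le \ell$ and more than $\ell(1+\alpha)$ codewords of $\map(\calD)$ lying in $A_1\times\dots\times A_n$. As in the list-decoding proof, a necessary condition for having $L+1$ such codewords (with $L=\ell(1+\alpha)$) is the existence of $b:=\lceil \log_q(L+1)\rceil$ linearly independent codewords $\map(u_1),\dots,\map(u_b)$ of $\calC$ all contained in the "box" $A_1\times\dots\times A_n$; equivalently (after a change of coordinates on the $b$-dimensional subspace they span) we can describe the bad event combinatorially in terms of the row-matrix $B\in\F_q^{m\times b}$ of $u_1,\dots,u_b$ and a target box. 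First I would union-bound over the $\le |\calD|^b \le q^{bRn}$ choices of linearly independent $u_1,\dots,u_b\in\calD$, and over the choices of box (there are at most $\binom{q}{\le \ell}^n \le q^{\ell n}$ of them, which is the dominant combinatorial cost and is why the rate must be $\Theta(\eps/\log q)$ with $\ell\le\alpha/4\eps^2$).

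The heart of the argument is to bound, for a \emph{fixed} $B$ of rank $b$ whose column span has distance $\ge m(1-q^{-1}-\eps^2)$ and a fixed target box, the probability that the pseudorandom walk $(\map_1,\dots,\map_n)$ lands in the prescribed product set. The key step is to observe that for the walk to land inside the box at \emph{every} coordinate $j$, at step $j$ the index $\map_j$ must lie in the set $S_j\subseteq[m]$ of rows of $B$ whose corresponding symbol is "allowed" at coordinate $j$; I need to bound the density $\beta_j=|S_j|/m$. Using the large-distance assumption on the column span of $B$ instead of Vazirani's XOR lemma: for any single nonzero linear functional of the columns, the number of coordinates where it takes any fixed value is at most $m(q^{-1}+\eps^2)$ (this is exactly what $\eta$-optimal distance with $\eta\approx q\eps^2$ gives via the standard weight-distribution bound, cf.\ the remark that $\eta$-biased implies $\eta$-optimal distance). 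A union/inclusion-exclusion over the $\le \ell$ allowed symbols per coordinate then gives $\beta_j \le \ell(q^{-1}+\eps^2) + O(q^{-b}\cdot\text{stuff})$ — I'd want a clean bound of the form $\beta_j\le \ell/q + O(\ell\eps^2)$, which with $\ell\le\alpha/4\eps^2$ keeps $\max_j\beta_j$ bounded away from $1$ and in fact small. Then \Cref{thm:EHS} gives $\Pr[\forall j,\ \map_j\in S_j]\le(\max_j\beta_j+\lambda)^{n-1}\le(\ell/q + O(\ell\eps^2)+1/4)^{n-1}$, and one checks that for $1/\sqrt q\le\eps\le\min(c,\alpha/4)$ this base is bounded by some $q^{-\delta}$ with $\delta$ a fixed multiple of $\eps$ (using $\ell/q\le\alpha/(4\eps^2 q)$ and $\eps^2 q\ge 1$, so $\ell/q\le\alpha/4$, etc.). Finally, combining: $\Pr[\text{fail}]\le q^{bRn}\cdot q^{\ell n}\cdot q^{-\delta(n-1)}$, and choosing $R=\Theta(\eps/\log q)$ small enough (so that $bR\log q$ and $\ell$ are both dominated by $\delta$) makes this go to $0$; the rate lower bound $\Omega(\eps/\log q)$ and the walk length $n=O(\log|\calD|/\eps)$ fall out of balancing these exponents, and the randomness is $O(\log m)+n\cdot O(\log d)=O(n)$ since $\lambda=1/4$ means $d$ is an absolute constant.

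The main obstacle I anticipate is getting a sufficiently strong bound on the per-step density $\beta_j$: the naive union bound over $\ell$ allowed symbols only controls $\beta_j$ in terms of the \emph{single-functional} distance, but a row of $B$ being "allowed" at coordinate $j$ is a condition on all $b$ coordinates of that row simultaneously, so one must be careful that the relevant events correspond to nonzero linear functionals (i.e.\ use that $B$ has full rank $b$ so that each fiber $\{i: B_i=\sigma\}$ has density $\le q^{-b}(1+\text{distance slack})$, or more simply bound via the $b=1$ marginal and a union over symbols). A secondary subtlety is handling the reduction from "$>\ell(1+\alpha)$ codewords in a box" to "$b$ linearly independent codewords in a (possibly enlarged) box" — the change of basis can blow up which symbols are allowed, so one wants to argue directly that $b$ linearly independent members of a box of width $\ell$ exist inside the original codeword set, using $q^{b-1}<L+1=\ell(1+\alpha)+1$, and keep the box unchanged. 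Once the density bound is in hand, the rest is the same bookkeeping as in \Cref{sec:basic} with $h_2(\rho)$ replaced by $\ell\log_q q=\ell$ (the box-counting entropy) and careful tracking of the $\eps$, $\alpha$, $\ell$ constraints.
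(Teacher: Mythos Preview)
Your approach has a fatal gap: the union bound over boxes cannot be closed with $\lambda=1/4$. Even granting the most optimistic per-step density bound $\beta_j\approx 0$, \Cref{thm:EHS} gives at best $(\lambda)^{n-1}=(1/4)^{n-1}$, whereas the number of boxes is $\binom{q}{\le \ell}^n\ge q^n$ already for $\ell=1$. Thus for any $q\ge 4$ the product $\binom{q}{\le\ell}^n\cdot(1/4)^{n-1}$ diverges, regardless of how small you take $R$. Your own bookkeeping reflects this: you need $\ell<\delta$, but $\delta\le\log_q(1/\lambda)=\log_q 4\le 2$, while $\ell$ is a positive integer that can range up to $\alpha/4\eps^2$. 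The issue is intrinsic to the box union bound, not to the density estimate; shrinking $\lambda$ would require $\lambda\ll q^{-\ell}$, destroying the $O(n)$-randomness guarantee.

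The paper (following Lund--Potukuchi) sidesteps this entirely. It never union bounds over the $A_i$. Instead, the bad event ``$>\ell(1+\alpha)$ codewords in some box'' is shown (\Cref{lem:LPlargeBadEvent}, via a probabilistic-method argument internal to the bad codeword set) to imply the existence of a subcode $\calC'\subset\calD$ of size only $O(\sqrt{\ell/\alpha})$ such that at least $n/8$ of the walk indices fall in the \emph{collision set} $T(\calC')=\{i:\exists\,c_1\ne c_2\in\calC',\ c_1[i]=c_2[i]\}$. The distance hypothesis gives $|T(\calC')|\le d\binom{|\calC'|}{2}\le m/16$ directly (pairwise agreements are exactly what minimum distance controls---this also dissolves the ``which symbol does $c$ take'' worry you flagged). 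One then union bounds over the $\le|\calD|^{O(\sqrt{\ell/\alpha})}$ choices of $\calC'$ and, for each, bounds $\Pr[\,|\{j:\map_j\in T(\calC')\}|\ge n/8\,]$ by the expander \emph{Chernoff} bound (\Cref{thm:expanderChernoff}), not the hitting-set lemma. The choice $n=\Theta(\eps^{-1}\log|\calD|)=\Theta(\sqrt{\ell/\alpha}\,\log|\calD|)$ is precisely what balances this union bound against the $\exp(-\Omega(n))$ Chernoff tail. The rate claim is handled separately by the same argument as \Cref{lem:designRate}.
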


We state the main theorem that allows us to establish this, which is analogous to Theorem 3.1 of~\cite{LP20}.
\newcommand{\cD}{\mathcal{D}}
\begin{theorem}\label{thm:LP31}
    Given $\alpha \in (0,1)$, let $q,m,d,\ell,n \in \N$. Given a code $\cD\subset \F_q^m$ of distance at least $1-mq^{-1}-d$. Suppose that
    \[
    d \geq mq^{-1},\quad 4\alpha^{-1} \leq \ell \leq \alpha m/1600d, \quad n =\Omega\left(\sqrt{\ell/\alpha}\log|D|\right), \quad m>n.
    \]
    Then the probability that $C:=\map(D)$ (where $\map$ is a $1/4$-pseudorandom puncturing) is $(\ell,(1+\alpha) \ell)$-zero error list recoverable is at least $1-\exp(-\sigma n/100)$.
\end{theorem}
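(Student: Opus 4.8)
The plan is to carry over the agreement-counting argument of Lund and Potukuchi~\cite{LP20} to the pseudorandom setting, replacing their independent-sampling concentration with the expander-walk bounds of \Cref{sec:prelims}. First, suppose toward a contradiction that $C:=\map(\cD)$ is not $(\ell,(1+\alpha)\ell)$-zero error list recoverable: there are sets $A_1,\dots,A_n\subseteq\F_q$ with $|A_t|\le\ell$ and $L:=(1+\alpha)\ell+1$ distinct codewords $c^{(1)},\dots,c^{(L)}\in C$, each lying in $A_1\times\cdots\times A_n$; write $c^{(j)}=\map(u^{(j)})$ for distinct $u^{(j)}\in\cD$. The first step is purely combinatorial. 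At each coordinate $t\in[n]$ the $L$ symbols $c^{(1)}_t,\dots,c^{(L)}_t$ lie in a set of size at most $\ell$, so by convexity at least $\frac{L(L-\ell)}{2\ell}$ of the $\binom{L}{2}$ pairs agree at coordinate $t$. Summing over $t$ and exchanging the order of summation, some pair $(j^\star,k^\star)$ satisfies
\[
\big|\{t\in[n]:\ c^{(j^\star)}_t=c^{(k^\star)}_t\}\big|\ \ge\ \frac{n(L-\ell)}{\ell(L-1)}\ \ge\ \frac{\alpha n}{2\ell},
\]
using $L=(1+\alpha)\ell+1$, $\alpha<1$, and $\ell\ge 4\alpha^{-1}$.

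Next I would reduce this to a statement about a single codeword visiting a sparse set. Because $\cD$ is linear, $w:=u^{(j^\star)}-u^{(k^\star)}$ is a nonzero codeword of $\cD$, and by the distance hypothesis its zero set $Z(w)\subseteq[m]$ has density $|Z(w)|/m\le 1-\mathrm{dist}(\cD)/m\le q^{-1}+d/m$; the hypotheses $d\ge mq^{-1}$ and $\ell\le\alpha m/1600d$ then force $|Z(w)|/m\le \alpha/800\ell$, a constant factor below the threshold $\alpha/2\ell$ above. Since a coordinate $t$ is an agreement of $c^{(j^\star)},c^{(k^\star)}$ exactly when $\map_t\in Z(w)$, the failure event is contained in
\[
\mathcal E:=\Big\{\exists\, w\in\cD\setminus\{0\}:\ |Z(w)|/m\le \tfrac{\alpha}{800\ell}\ \text{ and }\ \big|\{t\in[n]:\map_t\in Z(w)\}\big|\ge\tfrac{\alpha n}{2\ell}\Big\}.
\]
For a fixed such $w$, $|\{t:\map_t\in Z(w)\}|$ counts the visits of the $\tfrac14$-expander walk $(\map_t)$ to a vertex set of density at most $\alpha/800\ell$, so \Cref{thm:expanderChernoff} — or, to get the sharp dependence of $n$ on $\ell$ and $\alpha$, a relative-error/Bernstein-type strengthening of it, which is favorable here precisely because the target density sits far below the deviation — bounds this probability by $\exp(-\Omega((1-\lambda)\,n\cdot\mathrm{poly}(\alpha/\ell)))$. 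A union bound over the at most $|\cD|$ nonzero codewords $w$, together with $n=\Omega(\sqrt{\ell/\alpha}\,\log|\cD|)$ for a large enough absolute constant, then gives $\Pr[\mathcal E]\le\exp(-\sigma n/100)$, which would complete the argument. \Cref{thm:LP12} would follow by instantiating $d=m\eps^2$ — so the distance hypothesis reads $1-q^{-1}-\eps^2$ and $d\ge mq^{-1}\iff\eps\ge q^{-1/2}$ — and invoking \Cref{lem:designRate} to conclude that, with high probability, the actual rate of $C$ equals the design rate $\log_q|\cD|/n=\Theta(\eps/\log q)$.

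The hard part is the concentration-plus-union-bound step. The set $Z(w)$ has density $\Theta(\alpha/\ell)$, yet we must forbid a visit count that is a \emph{constant multiple} of $n\alpha/\ell$, i.e.\ a multiplicative overshoot rather than a small additive one, and we must do this simultaneously for $|\cD|^{\Theta(1)}$ sets. The expander hitting-set lemma (\Cref{thm:EHS}) is of no help here because $\lambda=\tfrac14$ is a fixed constant whereas beating the union bound through hitting would require $\lambda\ll1/\ell$, and the plain additive expander Chernoff bound $\exp(-\Omega((1-\lambda)n\eps^2))$ with $\eps=\Theta(\alpha/\ell)$ is too lossy, costing a factor $\sim(\ell/\alpha)^2$ in the number of steps. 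Obtaining the claimed $n=\tilde O(\sqrt{\ell/\alpha}\log|\cD|)$ therefore hinges on a sharper expander tail inequality together with a careful accounting verifying that the explicit constants baked into the hypotheses (the $1600$, the $4\alpha^{-1}$, and the $100$ in the failure probability) provide exactly the slack that refinement needs. A secondary point worth checking is that one should pass to the single worst pair $(j^\star,k^\star)$ and union-bound over the $|\cD|$ possible differences $w$, rather than retaining the full sum over pairs and paying a union bound over all $L$-tuples of codewords, which would reinsert a factor of $\ell$ into the exponent.
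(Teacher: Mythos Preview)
Your pair-based reduction is clean, but it cannot reach the claimed parameter $n=\Omega(\sqrt{\ell/\alpha}\,\log|\cD|)$, and the gap is not just a matter of plugging in a sharper Chernoff. Once you fix a single pair, the bad set $Z(w)$ has density $\mu=\Theta(\alpha/\ell)$ and you must forbid $\Theta(\mu n)$ visits; for \emph{any} large-deviation bound (i.i.d.\ or expander-walk, additive or Bernstein) the tail for a constant-factor overshoot of a density-$\mu$ set is at best $\exp(-\Omega(\mu n))=\exp(-\Omega(\alpha n/\ell))$. After the union bound over $|\cD|$ differences this forces $n=\Omega((\ell/\alpha)\log|\cD|)$, a full factor $\sqrt{\ell/\alpha}$ worse than stated. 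So the ``sharper expander tail inequality'' you hope for does not exist in the relevant regime.

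The paper closes this gap by \emph{not} passing to a single pair. Instead (\Cref{lem:LPlargeBadEvent}, following \cite{LP20}) it randomly sparsifies the bad list to a sub-tuple $\calC'$ of size $s=O(\sqrt{\ell/\alpha})$ with the property that the collision set $T(\calC')=\{i:\exists\,c_1\neq c_2\in\calC',\ c_1[i]=c_2[i]\}$ is visited by the walk on a \emph{constant} fraction ($\ge n/8$) of steps, while $|T(\calC')|\le d\,s^2\le m/16$ by the distance bound and the constraint $\ell\le \alpha m/1600d$. Now both the target density and the deviation are absolute constants, so the plain expander Chernoff bound (\Cref{thm:expanderChernoff}) already gives $\exp(-\Omega(n))$ per subcode. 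The price is a union bound over at most $|\cD|^{s}=|\cD|^{O(\sqrt{\ell/\alpha})}$ sub-tuples, which is exactly absorbed by $n=\Omega(\sqrt{\ell/\alpha}\,\log|\cD|)$. In other words, the right move is intermediate between your two options: not a pair (too sparse a bad set) and not the full $L$-tuple (too large a union bound), but a tuple of size $\Theta(\sqrt{\ell/\alpha})$ that makes the bad set have constant density. Your proposal is missing precisely this random-sparsification lemma; without it the argument stalls at $n=\Omega((\ell/\alpha)\log|\cD|)$. (A minor additional point: your reduction to the difference $w=u^{(j^\star)}-u^{(k^\star)}$ assumes $\cD$ is linear, which the statement does not; the paper's argument works directly with the agreement set $T(\calC')$ and needs only the distance hypothesis.)
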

Our proof differs from that of~\cite{LP20} in two ways: our puncturing is pseudorandom, rather than truly random, and we argue about puncturings produced with replacement (which is natural in the setting of expander random walks which may revisit vertices). 

We first introduce some notation that will be used in the proof. 
\begin{definition}
    For an arbitrary code $\calC \subset \F_q^m$, let
    \[
    T(\calC) = \{i \in [m] \;|\; \exists c_1 \neq c_2 \in \calC, c_1[i] = c_2[i]  \}.
    \]
\end{definition}
We first argue that, given an index set $\map\in [m]^n$ such that $\map(\cD)$ is not list recoverable with the claimed parameters, there is a small subcode that fails to be.
\begin{lemma}\label{lem:LPlargeBadEvent}
    Let $\map\in [m]^n$ be such that $\map(\cD)$ fails to be $(\ell,\ell(1+\alpha))$-zero error list recoverable. Then there is a subcode $\calC' \subset \cD$ such that:
    \begin{itemize}
        \item $|\calC'| \leq 10\sqrt{\ell/\gamma}$
        \item $|\{i \in [n]:\map_i\in T(\calC')\}|\geq n/8$
    \end{itemize}
\end{lemma}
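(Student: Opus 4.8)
The plan is to follow the extraction strategy of Lund--Potukuchi~\cite{LP20}: from a ``bad'' index sequence $\map$ we pass to a \emph{small} sublist of $\cD$ that still collapses a constant fraction of coordinates, via a birthday-paradox-style subsampling, and the size bound $|\calC'|\le 10\sqrt{\ell/\gamma}$ is exactly what this subsampling can afford (here $\gamma$ is the list-recovery slack, i.e.\ $\gamma=\alpha$ in \Cref{thm:LP31}). I note at the outset that the distance hypothesis on $\cD$ is not used in this lemma — the statement is purely combinatorial in $\map$; the distance enters only the remaining steps of the proof of \Cref{thm:LP31}, where one bounds $|T(\calC')|\le\binom{|\calC'|}{2}(mq^{-1}+d)$ for a \emph{fixed} small $\calC'$ and applies the expander Chernoff bound.

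First I would unpack the hypothesis. Since $\map(\cD)$ is not $(\ell,(1+\alpha)\ell)$-zero-error list recoverable, there are sets $A_1,\dots,A_n\subseteq\F_q$ with $|A_i|\le\ell$ and more than $(1+\alpha)\ell$ codewords of $\map(\cD)$ lying in the box $A_1\times\cdots\times A_n$; choosing one $\cD$-preimage of each produces $\mathcal L\subseteq\cD$ with $|\mathcal L|>(1+\alpha)\ell$ and $u[\map_i]\in A_i$ for all $u\in\mathcal L$ and $i\in[n]$. The key observation is a pigeonhole: for fixed $i$, the values $\{u[\map_i]:u\in\mathcal L\}$ all lie in the $\le\ell$-element set $A_i$ while $|\mathcal L|>\ell$, so some two distinct $u\ne u'\in\mathcal L$ agree at index $\map_i$, i.e.\ $\map_i\in T(\mathcal L)$; hence $\{i:\map_i\in T(\mathcal L)\}=[n]$. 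So the full preimage list already ``covers'' every coordinate — the only issue is that $|\mathcal L|$ can be astronomically large.

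Hence I would subsample. Set $s:=\min\{|\mathcal L|,\lfloor 10\sqrt{\ell/\gamma}\rfloor\}$. If $|\mathcal L|\le s$, take $\calC'=\mathcal L$, which covers all coordinates by the observation above (since $|\mathcal L|>\ell$). Otherwise let $\calC'$ be a uniformly random $s$-subset of $\mathcal L$, so $|\calC'|\le 10\sqrt{\ell/\gamma}$ holds deterministically. By linearity of expectation it suffices to prove $\Pr[\map_i\in T(\calC')]\ge 1/8$ for each fixed $i$: then $\E\,\bigl|\{i:\map_i\in T(\calC')\}\bigr|\ge n/8$, and the probabilistic method supplies an $s$-subset with the required covering. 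For a fixed $i$, the event $\map_i\notin T(\calC')$ says precisely that the $s$-subset $\calC'$ hits pairwise-distinct ``colors'' of the $|\mathcal L|$-element multiset $\{u[\map_i]:u\in\mathcal L\}$, a multiset supported on $\le\ell$ colors and containing at least $\alpha\ell$ more elements than colors; the goal is to bound this probability by $7/8$.

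I expect this per-coordinate birthday estimate to be the main obstacle, since it must hold \emph{uniformly in $|\mathcal L|$} and the regimes behave differently. If $s>\ell$ (which happens for $\ell$ small relative to $1/\gamma$), pigeonhole forces a collision outright. If $|\mathcal L|$ is comparable to $\ell$, the multiset is mostly singletons with only $\Theta(\alpha\ell)$ ``doubled'' colors; here one fixes a matching of $\ge(|\mathcal L|-\ell)/2$ vertex-disjoint colliding pairs and runs a Chung--Erd\H{o}s / second-moment bound showing that the probability some pair lands entirely in $\calC'$ is $\Omega(1)$ — and the expected number of such pairs is $\gtrsim 1$ precisely because $s=\Theta(\sqrt{\ell/\gamma})$. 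If $|\mathcal L|$ is large the colors spread into $\le\ell$ roughly-balanced classes, and a standard birthday estimate closes it: by the majorization fact that the all-distinct probability for an $s$-sample is maximized by the uniform color distribution, it is at most $\prod_{k=0}^{s-1}(1-k/\ell)\le e^{-\binom{s}{2}/\ell}=e^{-\Omega(1/\gamma)}$, the finite-population correction being negligible once $|\mathcal L|\gg\ell$. Stitching these (plus the intermediate regime) together under a single absolute constant is the delicate bookkeeping, and it is this $\sqrt{\ell/\gamma}$ dependence of $|\calC'|$ that ultimately forces the $n=\Omega(\sqrt{\ell/\alpha}\log|\cD|)$ requirement in \Cref{thm:LP31}. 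The pigeonhole step, the linearity-of-expectation reduction, and the probabilistic-method conclusion are all routine.
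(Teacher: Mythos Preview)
Your plan is sound and would go through, but it takes a harder road than the paper does. The paper samples $\calC'$ by including each element of the bad list independently with probability $p=\Theta\bigl(\sqrt{\ell/\gamma}/|\mathcal L|\bigr)$ rather than drawing a fixed-size subset. Under Bernoulli sampling, the $\ge\alpha\ell/2$ vertex-disjoint colliding pairs at each coordinate give independent events, so the per-coordinate miss probability is exactly $(1-p^2)^{\alpha\ell/2}<1/2$ with no case analysis, no second-moment computation, and no majorization. The price is that $|\calC'|$ is now random; the paper handles this with a single linear combination $Y=\sum_i\mathbb I[\map_i\in T(\calC')]-\tfrac{n}{8}\,|\calC'|/\E|\calC'|$, whose positive expectation forces a realization satisfying both the coverage and the size bound simultaneously. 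So the trade is: your fixed-$s$ sampling buys a deterministic size bound but pushes all the work into the birthday estimate; the paper's Bernoulli sampling makes the birthday step a one-liner and absorbs the size control into the $Y$ trick.

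Two remarks on your outline. First, the ``main obstacle'' you identify --- uniformity of the per-coordinate bound in $|\mathcal L|$ --- is self-inflicted: you may discard all but $\lceil(1+\alpha)\ell\rceil$ elements of $\mathcal L$ at the outset without weakening the hypothesis, collapsing your three regimes into the single ``$|\mathcal L|$ comparable to $\ell$'' case where your disjoint-matching plus second-moment argument already works cleanly. Second, your Schur-concavity claim for the all-distinct probability (maximized at the uniform color profile) is correct, but once you truncate $\mathcal L$ it is no longer needed.
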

The proof of this lemma closely follows Theorem 3.1 in~\cite{LP20}, and as such we defer it to the appendix.
We furthermore require a concentration bound for the number of bad indices selected by the puncturing map, which is a simple consequence of the expander Chernoff bound.
\begin{lemma}\label{lem:LPexpChernoff}
    Let $B\subset [m]$ be a bad set of indices satisfying $\beta:=|B|/m\leq 1/16$, and let $\map$ be a $1/4$-pseudorandom puncturing.
    Then 
    \[\Pr \left [ \left | \{i \in [n]: \map_i \in B \} \right | \geq \frac{n}{8} \right ] \leq \exp(-\Omega(n)).\]
\end{lemma}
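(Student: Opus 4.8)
The plan is to derive this as an immediate consequence of the expander Chernoff bound, \Cref{thm:expanderChernoff}. Recall from \Cref{def:pseudorandomPuncturing} that a $1/4$-pseudorandom puncturing is obtained from a length-$n$ random walk $\map_1,\ldots,\map_n$ on an $(m,d,\lambda)$-expander $G$ with $\lambda \leq 1/4$; the $O(\log m)$ random bits used to sample the walk pick the initial vertex $\map_1$ uniformly, so $\map_1,\ldots,\map_n$ is a \emph{stationary} walk, exactly as required by \Cref{thm:expanderChernoff}. (This is the one point worth checking: for a fixed rather than uniform start vertex the Chernoff bound loses a multiplicative $\sqrt{m}$, which would be ruinous since $m$ may be exponential in $n$; starting stationary avoids this.)

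First I would set $S := \sum_{i=1}^n \mathbb{I}[\map_i \in B]$, so that $\E[S] = n\beta$ where $\beta = |B|/m$. Since by hypothesis $\beta \leq 1/16$, the event $\{S \geq n/8\}$ implies $S - n\beta \geq n/8 - n\beta \geq n/16$, and therefore
\[
\Pr\!\left[\left|\{i \in [n]:\map_i \in B\}\right| \geq \tfrac n8\right] \;=\; \Pr\!\left[S \geq \tfrac n8\right] \;\leq\; \Pr\!\left[\,|S - n\beta| > \tfrac{n}{16}\,\right].
\]
Next I would apply \Cref{thm:expanderChernoff} to the set $B$ (of density $\mu = \beta$) with deviation parameter $\eps = 1/16$, obtaining
\[
\Pr\!\left[\,|S - n\beta| > \tfrac{n}{16}\,\right] \;\leq\; 2\,e^{-\Omega\!\left((1-\lambda)\, n\, (1/16)^2\right)}.
\]
Finally, using $\lambda \leq 1/4$ so that $1 - \lambda \geq 3/4$, the exponent is $\Omega(n)$ and the right-hand side is $\exp(-\Omega(n))$, which gives the claim.

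There is essentially no obstacle: the entire content is choosing the slack $n/8 - n\beta \geq n/16$ so that a one-sided tail is controlled by the (two-sided) expander Chernoff bound, together with the observation that the pseudorandom puncturing's walk is stationary. Everything else is a direct substitution into \Cref{thm:expanderChernoff} with constant parameters.
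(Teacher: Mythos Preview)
Your proposal is correct and matches the paper's approach exactly: the paper's ``proof'' is the single sentence ``This follows from the expander Chernoff bound, as stated in \Cref{thm:expanderChernoff},'' and you have simply spelled out the routine substitution (including the useful remark that the walk is stationary, which is what makes the bound applicable without an extra $\sqrt{m}$ factor).
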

This follows from the expander Chernoff bound, as stated in \Cref{thm:expanderChernoff}. We can then prove \Cref{thm:LP31}.
\begin{proof}[Proof of \Cref{thm:LP31}]
Let $X$ be the indicator that $\map(\calD)$ fails to be $(\ell, \ell(1 + \alpha))$-zero error list recoverable. Then
\begin{align*}
    \E[X] &\leq \sum_{\calC' \subset \map(\cD):|\calC'|\leq 10\sqrt{\ell/\gamma}}\Pr[|\{i:\map_i\in T(\calC')\}|\geq \sigma n/4] && \text{(\Cref{lem:LPlargeBadEvent})}\\
    &\leq \sum_{\calC' \subset \map(\cD):|\calC'|\leq 10\sqrt{\ell/\gamma}} \exp(-\Omega(n)) && \text{(\Cref{lem:LPexpChernoff})}\\
    &\leq \exp\left(10\sqrt{\ell/\gamma}\log|\cD|-\Omega(n)\right)\\
    &\leq \exp(-\Omega(n))
\end{align*}
where the second line follows by observing 
\[
|T(\calC')| \leq d|\calC'|^2 \leq 100 d\ell/\gamma \leq \frac{m}{16}
\]
where the first inequality follows from the distance of the code and the third follows from our bound on $\ell$, and so $T(\calC')\subset [m]$ satisfies the properties of \Cref{lem:LPexpChernoff}.
\end{proof}

\begin{proof}[Proof of \Cref{thm:LP12}]
    Let $n:=\lceil c' \eps^{-1}\log|\calD|\rceil$ for come constant $c'>0$ such that the construct of \Cref{thm:LP31} is satisfied (for parameters to be chosen later). We first show that the actual rate of $\map(\calD)$ is equal to the design rate with high probability. Analogously to the proof of \Cref{lem:designRate}, note that this event is equivalent to there existing $u\in \calD$ such that $\map(u)=0$. Fixing arbitrary $u\in \calD$, let $T\subset [m]$ be the coordinates on which $u$ is zero. Then $|T|/m \leq 1/q+\eps^2 \leq .6$ by the absolute constraint on $\eps$ and that $q\geq 2$. Then
    \[\Pr[\map(u)=0] = \left(.6+\lambda\right)^n = 2^{-\Omega(n)}\]
    Thus the probability that all such codewords are not mapped to all zero indices is $|\calD|2^{-n}\leq \exp(-\Omega(n))$, so with high probability the rate of $\map(\calD)$ is equal to the design rate of $\Omega(\eps/\log q)$. Finally, choose $\ell = \alpha/4\eps^2$ and $d=\lfloor m\eps^2\rfloor$ and applying \Cref{thm:LP31} completes the proof. 
\end{proof}

\section{Acknowledgements}
We thank Madhu Sudan for helpful conversations.

\bibliographystyle{alpha}
\bibliography{ref}

\appendix
\section{Omitted Proofs}
\begin{proof}[Proof of \Cref{lem:LPlargeBadEvent}]
    By assumption, for $i\in [n]$ there are subsets $A_i\subseteq [q]$ such that $|A_i|\leq \ell$ and, letting
    \newcommand{\BAD}{\textsc{BAD}}
    \[
    \BAD :=\{c\in \cD:\map(c)\in \prod_{i\in [n]}A_i\},
    \]
    we have $|\BAD|\geq \ell(1+\alpha)$. 
    Let $\calC'\subset\BAD$ be defined by randomly including each element of $\BAD$ with probability 
    \[p=\sqrt{\frac{2\ell}{\gamma}}\cdot \frac{(1+\alpha)}{|\BAD|}.\]
    Note that
    \[\E[|\calC'|]=p|\BAD|\leq \sqrt{8\ell/\gamma}.\]
    Now note that for every $i$, there are at least $\alpha\ell/2$ pairs $\{c_1,c_2\}\in \BAD$ with $c_1[\map_i]=c_2[\map_i]$, which holds as $|A_i|\leq \ell$ and $|\BAD|\geq (1+\alpha)\ell$.
    Thus, for every $i \in [n]$, 
    \[\Pr[\map_i \notin T(\calC')] = (1-p^2)^{\alpha\ell/2} <1/2
    \]
    Thus, 
    \[
    \E[|\{i:\map_i \in T(\calC')\}|] \geq n/4.
    \]
    
    Finally, define the random variable
    \[
    Y = \sum_{i \in [n]}\mathbb{I}[\map_i \in T(\calC')] - \frac{n}{8}\frac{|\calC'|}{\E[|\calC'|]}.
    \]
    We have $\E[Y]\geq n/4$, and hence there exists $\calC'$ such that $Y$ achieves its expectation, which can only occur when \[|\{i:\map_i \in T(\calC')\}|\geq n/4\]
    and $|\calC'|\leq 10\sqrt{\ell/\gamma}$, so we conclude.
\end{proof}

\section{Relationship of Zero-Error List-Recoverability Bounds to Unbalanced Expanders}\label{app:expcons}

We will require the definition of an unbalanced expander. 

\begin{definition}[Unbalanced Expander]
    A $(k, d, \eps)$-regular unbalanced expander is a bipartite graph on vertex set $V = L \cup R$, $|L| \geq |R| $, where the degree of every vertex in $L$ is $d$, and for every $S \subseteq L$ such that $|S| = k$, we have that $|N(S)| \geq D |S| (1 - \eps)$.
\end{definition}

Further, we will require a procedure that turns a code into a graph. 

\begin{definition}[Bipartite Graph of a Code]
    For a code $\calC \subseteq [q]^n$, we denote by $G(\calC)$ the bipartite graph with vertex set $\calC \cup ([n] \times [q])$. For an arbitrary $c = (c_1, \dots c_n) \in \calC$, we associate it with the neighbors $\{(1, c_1), \ldots,(n, c_n)\}$.
\end{definition}

In \cite{LP20}, the authors show the following result, by relating zero-error list-recoverability to expansion of a graph $G$:

\begin{theorem}\cite{LP20}\label{thm:LP2.7}
Let $q, n$ be sufficiently large integers and $\alpha \in (0, 1), \eps > q^{-1/2}$ be real numbers. For every code $\calD \subseteq [q]^m$ with relative distance $1 - 1/q - \eps^2$, there is a subset $S \subseteq [m]$ such that $|S| = O(\eps m \log q)$ such that $G(\calD_S)$ is a $\left (\alpha \eps^{-2}, |S|, \alpha \right )$-unbalanced expander. 
    
\end{theorem}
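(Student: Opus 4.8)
The plan is to obtain \Cref{thm:LP2.7} as a corollary of our zero-error list-recovery result \Cref{thm:LP12}, via the dictionary between unbalanced expansion of $G(\calC)$ and zero-error list recoverability of $\calC$ that underlies \cite{LP20}. The only change relative to \cite{LP20} is that the index set $S$ is produced by a $1/4$-pseudorandom puncturing rather than a truly random subset, which is exactly the small derandomization being advertised.

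First I would fix parameters. Since $\eps > q^{-1/2}$, the relative distance $1-1/q-\eps^2$ exceeds $1-2\eps^2$, so the Singleton bound gives $\log_q|\calD| \le 2\eps^2 m + 1$. Taking $\map$ to be a $1/4$-pseudorandom $(m \to n)$ puncturing with $n = \Theta(\log|\calD|/\eps)$ as in \Cref{thm:LP12} then yields $n = O(\eps m \log q)$, which will be $|S|$. Choosing $\ell = \Theta(\alpha\eps^{-2})$ in the range required by \Cref{thm:LP12}, that theorem gives that with high probability $\map(\calD)$ is $(\ell, (1+\alpha)\ell)$-zero-error list recoverable; fix one such outcome and let $S = \{i_1,\dots,i_n\} \subseteq [m]$ be the set of visited vertices, so $|S| \le n = O(\eps m\log q)$. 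A zero-error list-recovery witness for the punctured code $\calD_S$ pulls back along $\map$ to one for $\map(\calD)$, so $\calD_S$ is $(\ell, (1+\alpha)\ell)$-zero-error list recoverable as well; this reduction is what lets us ignore the fact that the expander walk visits vertices with repetition.

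The combinatorial core is to convert this list-recoverability statement into the claimed expansion of $G(\calD_S)$. For the direction we do \emph{not} need but which motivates the parameters: for a set $W$ of $k = \alpha\eps^{-2}$ left vertices one has $|N(W)| = \sum_{i\in S}|\pi_i(W)|$, where $\pi_i(W)\subseteq[q]$ is the set of symbols appearing in coordinate $i$ among codewords of $W$; taking the lists $A_i$ to be $\pi_i(W)$ (truncated to size $\ell$) shows that $|N(W)| < |S|k(1-\alpha)$ would make $W$ consistent with lists of size $\ell$, contradicting list recoverability once $k > \ell/(1-\alpha)$. For the direction we actually use I would mirror \Cref{lem:LPlargeBadEvent}: if $G(\calD_S)$ fails to be a $(\alpha\eps^{-2}, |S|, \alpha)$-unbalanced expander, a witnessing set $W$ has total neighborhood deficiency $\sum_{i\in S}(k - |\pi_i(W)|) > \alpha|S|k$, hence $W$ collides pairwise on many coordinates of $S$; a subsampling/peeling step identical to the one proving \Cref{lem:LPlargeBadEvent} extracts a sub-collection $W' \subseteq W$ with $|W'| = O(1/\eps)$ that still collides on a constant fraction of $S$. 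By the distance of $\calD$, each pair in $W'$ agrees on at most $m(1/q + \eps^2) \le 2\eps^2 m$ coordinates, so $|T(W')| \le \binom{|W'|}{2}\cdot 2\eps^2 m \le m/16$ for the right constant in $|W'|$; but then $\map$ has placed a constant fraction of its $n$ steps inside a density-$\le 1/16$ set, which by \Cref{thm:expanderChernoff} occurs with probability $\exp(-\Omega(n))$, and a union bound over the at most $|\calD|^{O(1/\eps)} = \exp(O(\eps m\log q)) = \exp(O(n))$ choices of $W'$ (with the constant in $n = \Theta(\eps m\log q)$ chosen large enough) rules this out. Hence $G(\calD_S)$ has the claimed expansion.

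The main obstacle is the parameter bookkeeping in the last step: one must choose the peeling probability and the target size of $W'$ so that $|W'|$ is small enough to force $|T(W')| \le m/16$ while the peeling still preserves that a constant fraction of $S$ lies in $T(W')$, and simultaneously keep the values $k = \alpha\eps^{-2}$, $\ell = \Theta(\alpha\eps^{-2})$ and the $1\pm\alpha$ gaps consistent between the expander and list-recovery formulations --- this is precisely the \cite{LP20} analysis (our \Cref{lem:LPlargeBadEvent} and the proof of \Cref{thm:LP31}), now re-run with the expander Chernoff bound in place of the i.i.d. one. A secondary point is the range of $\alpha$: \Cref{thm:LP12} is stated for $\alpha \gtrsim \eps$, so for very small $\alpha$ one either argues directly (the sets $W$ of size $\alpha\eps^{-2}$ are then tiny) or notes the statement is essentially vacuous there.
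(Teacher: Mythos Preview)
The paper does not prove \Cref{thm:LP2.7}. The statement is explicitly attributed to \cite{LP20} (``In \cite{LP20}, the authors show the following result\ldots''), and no proof appears anywhere in the paper. The appendix containing the theorem consists only of two definitions, the cited statement, and a closing paragraph noting that when \cite{LP20} instantiate the result for Reed--Solomon codes, the subset $S$ is only known to exist among $\binom{n^2}{n}=2^{O(n\log n)}$ candidate index sets, whereas the paper's pseudorandom-puncturing construction narrows this search space to $2^{O(n)}$ candidates. That remark is not a proof of \Cref{thm:LP2.7}; it is a comment on the randomness cost of locating $S$.

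Your proposal therefore has nothing in the paper to be compared against. What you wrote is an attempt to re-derive the \cite{LP20} result (and its derandomized version) from the paper's \Cref{thm:LP12}, which is more than the paper does. As a separate note on your sketch: the logic in your second and third paragraphs is tangled. In paragraph two you fix an outcome $\map$ for which $\map(\calD)$ is list-recoverable and pass to $\calD_S$; at that point $\map$ is deterministic. But in the ``direction we actually use'' part of paragraph three you revert to a probabilistic argument over $\map$ (invoking \Cref{thm:expanderChernoff} and a union bound over $W'$), which no longer makes sense once $\map$ is fixed. Either argue directly that list-recoverability of $\calD_S$ implies the stated expansion of $G(\calD_S)$ deterministically (this is the \cite{LP20} dictionary, and is the direction you label as ``not needed''), or run the entire probabilistic argument for expansion from scratch and skip the detour through \Cref{thm:LP12}; as written you have mixed the two.
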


\cite{LP20} instantiate this result for degree $d$ Reed-Solomon codes, with $m = q$, $\eps = (d / q)^{-1/2}$. Thus, $n = \widetilde{O}(\sqrt{q})$.
Note that in this setting, we are only guaranteed the existence among $\binom{n^2}{n} = 2^{O(n \log n)}$ possible choices for the punctured set. With our construction, we can again use degree $d$ Reed-Solomon Codes, and recover that there exists such an unbalanced-expander among only $2^{O(n)}$ possible choices for the punctured set. 

\end{document}